\def\R{\mathbb R}
\def\Z{\mathbb Z}
\def\bA{\mathbb{A}}
\def\bC{\mathbb{C}}
\def\bF{\mathbb{F}}
\def\bG{\mathbb{G}}
\def\bN{\mathbb{N}}
\def\bQ{\mathbb{Q}}
\def\bR{\mathbb{R}}
\def\bS{\mathbb{S}}
\def\bW{\mathbb{W}}
\def\bZ{\mathbb{Z}}
\def\cA{\mathcal{A}}
\def\cD{\mathcal{D}}
\def\cE{\mathcal{E}}
\def\cH{\mathcal{H}}
\def\cL{\mathcal{L}}
\def\cM{\mathcal{M}}
\def\cO{\mathcal{O}}
\def\cR{\mathcal{R}}
\def\cS{\mathcal{S}}
\def\cZ{\mathcal{Z}}
\def\fb{\mathfrak{b}}
\def\fm{\mathfrak{m}}
\def\fp{\mathfrak{p}}
\def\fq{\mathfrak{q}}
\def\fB{\mathfrak{B}}
\def\fE{\mathfrak{E}}
\def\fF{\mathfrak{F}}
\def\fL{\mathfrak{L}}
\def\fP{\mathfrak{P}}
\def\fT{\mathfrak{T}}
\def\Frac{\operatorname{Frac}}
\def\GL{\operatorname{GL}}
\def\Gal{\operatorname{Gal}}
\def\Hom{\operatorname{Hom}}
\def\Im{\operatorname{Im}}
\def\Ind{\operatorname{Ind}}
\def\Isom{\operatorname{Isom}}
\def\Spec{\operatorname{Spec}}
\def\det{\operatorname{det}}
\def\diag{\operatorname{diag}}
\def\dim{\operatorname{dim}}
\def\id{\operatorname{id}}
\def\im{\operatorname{im}}
\def\mod{\operatorname{mod}}
\def\ord{\mathrm{ord}}
\def\sgn{\operatorname{sgn}}
\def\univ{\operatorname{univ}}
\def\ev{\operatorname{ev}} 
\def\eps{\varepsilon}
\newcommand{\beas}{\begin{eqnarray*}}
\newcommand{\eeas}{\end{eqnarray*}}
\newcommand{\wt}[1]{\widetilde{#1}}
\newcommand{\Eisab}{\mu_{G'}}
\newcommand{\ellcan}{\ell_{\mathrm{can}}}
\newcommand{\uo}{\underline{\omega}}
\newcommand{\et}{\text{\it{\'et}}}
\newcommand{\isomto}{\overset{\sim}{\rightarrow}}
\newcommand{\dual}{^\vee}
\newcommand{\hdr}{H_{dR}}
\newcommand{\Fil}{\mathrm{Fil}}
\newcommand{\Gr}{\mathrm{Gr}}
\newcommand{\KS}{\mathrm{KS}}
\newcommand{\ci}{C^\infty}
\newcommand{\ZZ}{\bZ}
\newcommand{\IQ}{\bQ}
\newcommand{\OK}{\mathcal{O}_\cmfield}
\newcommand{\adeles}{\bA}
\newcommand{\hern}{\mathrm{Herm}_n}
\newcommand{\IC}{\bC}
\newcommand{\cmfield}{K}
\newcommand{\realfield}{\cmfield^+}
\newcommand{\Oreal}{\cO_{\realfield}}
\newcommand{\GMplus}{GM_+}
\newcommand{\res}{\mathrm{res}}
\newcommand{\IR}{\bR}
\newcommand{\similitude}{\nu}
\newcommand{\cpct}{\mathcal{U}}
\newcommand{\xzero}{x_0}
\newcommand{\xzeroreduced}{{\overline{x}_0}}
\newcommand{\an}{\mathrm{an}}
\newcommand{\modulispace}{\mathcal{M}}
\newcommand{\uA}{\underline{A}}
\newcommand{\polarization}{\lambda}
\newcommand{\ENDO}{i}
\newcommand{\level}{\alpha}
\newcommand{\PELtuple}{\left(A, \ENDO, \polarization, \level\right)}
\newcommand{\reflex}{E}
\newcommand{\Auniv}{\cA_{\mathrm{univ}}}
\newcommand{\Ekap}{\cE_{\kappa}}
\newcommand{\lambdaiso}{\ell}
\newcommand{\siga}{{a_+}}
\newcommand{\sigb}{{a_-}}
\newcommand{\sigp}{\siga}
\newcommand{\sigm}{\sigb}
\newcommand{\sigpm}{{a_{\pm}}}
\newcommand{\<}{\left\langle}
\renewcommand{\>}{\right\rangle}
\newcommand{\Sord}{{\cS^{\rm ord}}}
\newcommand{\Sordm}{{\cS^{\rm ord}_m}}
\newcommand{\Igusa}{\mathrm{Ig}}
\newcommand{\igusa}{\Igusa}
\newcommand{\fpb}{\overline{\mathbb F}_p}
\newcommand{\Witt}{\mathbb W}
\newcommand{\Levi}{H}
\def\Levin{H}
\newcommand{\zz}{{\mathbb Z}}
\newcommand{\lcan}{\ell_{\rm can}}
\newcommand{\gm}{{\hat{\mathbb G}_m}}
\newcommand{\Ring}{\cR_{\Sord,\xzero}}
\newcommand{\omicron}{a}
\newcommand{\Sordprime}{{\cS}^{'\rm ord}}
\newcommand{\arch}{\Sigma} 
\newcommand{\archK}{\Sigma_K} 
\newcommand{\ov}{\overline}
\newcommand{\ul}{\underline}
\newcommand{\ra}{\rightarrow}
\newcommand{\Symm}{\mathfrak{S}}
\newcommand{\toisom}{\buildrel\sim\over\to}
\newcommand{\Mord}{\cM^{\mathrm{ord}}}
\newcommand{\Ig}{\Igusa}
\newtheorem{thm}{Theorem}
\numberwithin{thm}{subsection}
\newtheorem{cor}[thm]{Corollary}
\newtheorem{lem}[thm]{Lemma}
\newtheorem{prop}[thm]{Proposition}
\newtheorem{mainresult}{Main Result}
\theoremstyle{definition}
\newtheorem{defi}[thm]{Definition}
\newtheorem{defn}[thm]{Definition}
\theoremstyle{remark}
\newtheorem{rmk}[thm]{Remark}
\numberwithin{equation}{subsubsection}
\title[Differential operators and families]{Differential operators and families of automorphic forms on unitary groups of arbitrary signature}
\author[E. Eischen]{Ellen Eischen}
\thanks{EE's research was partially supported by NSF Grants DMS-1249384 and DMS-1559609.}
\author[J. Fintzen]{Jessica Fintzen}
\thanks{JF's research was partially supported by the Studienstiftung des deutschen Volkes.}
\author[E. Mantovan]{Elena Mantovan}
\thanks{EM's research was partially supported by NSF Grant DMS-1001077.}
\author[I. Varma]{Ila Varma}
\thanks{IV's research was partially supported by a National Defense Science and Engineering Fellowship and NSF Grant DMS-1502834.}
\address{E. Eischen\\
Department of Mathematics\\
University of Oregon\\
Fenton Hall\\
Eugene, OR 97403-1222\\
USA}
\email{eeischen@uoregon.edu}
\address{J. Fintzen\\
 Department of Mathematics\\
 University of Michigan\\
2074 East Hall\\
530 Church Street\\
Ann Arbor, MI 48109\\
 USA}
 \email{fintzen@umich.edu}
\address{E. Mantovan\\
 Department of Mathematics\\
Caltech\\
Pasadena, CA 91125\\
 USA}
 \email{mantovanelena@gmail.com}
\address{I. Varma\\
 Department of Mathematics\\
Columbia University\\
 New York, NY 10027\\
 USA}
 \email{ila@math.columbia.edu}
\begin{document}
\bibliographystyle{amsalpha}

\newpage
\setcounter{page}{1}
\maketitle
\begin{abstract}
In the 1970's, Serre exploited congruences between $q$-expansion coefficients of Eisenstein series to produce $p$-adic families of Eisenstein series and, in turn, $p$-adic zeta functions.  Partly through integration with more recent machinery, including Katz's approach to $p$-adic differential operators, his strategy has influenced four decades of developments.  Prior papers employing Katz's and Serre's ideas exploiting differential operators and congruences to produce families of automorphic forms rely crucially on $q$-expansions of automorphic forms.   

The overarching goal of the present paper is to adapt the strategy to automorphic forms on unitary groups, which lack $q$-expansions when the signature is of the form $(a, b)$, $a\neq b$.  In particular, this paper completely removes the restrictions on the signature present in prior work.  As intermediate steps, we achieve two key objectives.  First, partly by carefully analyzing the action of the Young symmetrizer on Serre--Tate expansions, we explicitly describe the action of differential operators on the Serre--Tate expansions of automorphic forms on unitary groups of arbitrary signature.  As a direct consequence, for each unitary group, we obtain congruences and families analogous to those studied by Katz and Serre.  Second, via a novel lifting argument,
we construct a $p$-adic measure taking values in the space of $p$-adic automorphic forms on unitary groups of any prescribed signature.  We relate the values of this measure to an explicit $p$-adic family of Eisenstein series.  One application of our results is to the recently completed construction of $p$-adic $L$-functions for unitary groups by the first-named author, Harris, Li, and Skinner.
\end{abstract}

\tableofcontents
\section{Introduction}

\subsection{Motivation and context}\label{strategy-innovations}

\subsubsection{Influence of a key idea of Serre about congruences}\label{history-section}
J.-P. Serre's idea to exploit congruences between Fourier coefficients of Eisenstein series
 to construct certain $p$-adic zeta-functions continues to have a far-reaching impact.  His strategy has led to numerous developments, partly through integration with more recent machinery.  For example, his approach is seen in work on the Iwasawa Main Conjecture (e.g. in \cite{SkUr}).  Emblematic of the reach of Serre's idea to interpolate Fourier coefficients of Eisenstein series, his $p$-adic families of Eisenstein series also occur even in homotopy theory, as the {\it Witten genus}, an invariant of certain manifolds \cite{hopkinsICM, AHR}.

Serre's idea in \cite{serre} has been employed in increasingly sophisticated settings.  J. Coates and W. Sinnott extended it to construct $p$-adic $L$-functions over real quadratic fields \cite{coates-sinnott}, followed by P. Deligne and K. Ribet over totally real fields \cite{DR}.  Developing it further, N. Katz handled CM fields $K$ (when $p$ splits in $K$), using congruences between Fourier coefficients of Eisenstein series in the space of Hilbert modular forms \cite{kaCM}.  Using Katz's Eisenstein series, H. Hida produced $p$-adic $L$-functions of families of ordinary cusp forms \cite{hidaLfcn}, leading to A. Panchishkin's $p$-adic $L$-functions of non-ordinary families \cite{pa}.

The $p$-adic families of Eisenstein series on unitary groups of signature $(n,n)$ in \cite{apptoSHL, apptoSHLvv, emeasurenondefinite} and related families of automorphic forms for arbitrary signature in Theorem \ref{measurethm} of this paper play a key role in the recent construction of $p$-adic $L$-functions for unitary groups \cite{EHLS}.  (The related approach proposed in \cite{HLS}, on which \cite{EHLS} elaborates, also inspired work in \cite{bou, EW, FUH, ZL, XW}.)
These $p$-adic families also conjecturally give an analogue of the Witten genus, at least for signature $(1, n)$ \cite{beh}.  
\subsubsection{An inspiration for four decades of innovations and the necessity of more}

Most of the four decades of developments in Section \ref{history-section} require increasingly sophisticated methods, even though the overarching strategy (``find a family of Eisenstein series, observe congruences, relate to an $L$-function'') is well-established.  The devil is in the details.  We now highlight three ingredients from the above constructions most relevant to the details of the present work: (1) $q$-expansions; (2) differential operators; (3) Eisenstein series.

\medskip

\noindent {\bf (1) $q$-expansions.}
All prior papers employing Serre's idea to exploit congruences between Eisenstein series rely crucially on the $q$-expansions of automorphic forms.   The key goal of the present paper is to extend the aforementioned strategies to automorphic forms on unitary groups, which lack $q$-expansions when the signature is not $(n,n)$.  In their place, we use {\it Serre--Tate expansions} (or {\it $t$-expansions}), expansions at ordinary CM points (whose structure leads to a natural choice of coordinates, {\it Serre--Tate coordinates}) and the Serre--Tate Expansion Principle \cite[Theorem 5.14, Proposition 5.5, Corollary 5.16]{CEFMV}.

\medskip

\noindent {\bf (2) Differential operators.} A key innovation of Katz in \cite{kaCM} is the construction of $p$-adic differential operators (generalizing the Maass--Shimura operators studied extensively by Harris and Shimura \cite{hasv, sh, Shimura, shclassical}) and a description of their action on $q$-expansions, when $p$ splits in the CM field.  Lacking $q$-expansions, we compute the action of differential operators on Serre--Tate expansions and, as a consequence, produce congruences and families of $p$-adic automorphic forms.  Our work builds on \cite{E09, EDiffOps, CEFMV, KaST, brooks} and requires careful analysis of the action of Schur functors (in particular the Young symmetrizer) on Serre--Tate expansions.  (In a different direction, E. Goren and E. de Shalit recently constructed $p$-adic differential operators for signature $(2,1)$ with $p$ inert \cite{DSG}.)  

\medskip

\noindent{\bf (3) Eisenstein series.} The constructions in Section \ref{history-section} rely on congruences between $q$-expansion coefficients of Eisenstein series.  For unitary groups of arbitrary signature, we compensate with explicit computation of the action of the Young symmetrizer on Serre--Tate coordinates.  Also applying a novel lifting argument to the Eisenstein series on unitary groups of signature $(n,n)$ constructed in \cite{apptoSHL, apptoSHLvv} independently of Serre--Tate coordinates, we also construct explicit $p$-adic families for arbitrary signature.

\subsection{This paper's main results, innovations, and connections with prior work}\label{innovation-section}

As noted above, a key accomplishment of this work is that it produces families without needing $q$-expansions and thus is applicable to unitary groups of all signatures.  The results and techniques in this paper carry over to the automorphic forms in other papers extending Serre's strategy (i.e. Siegel modular forms, Hilbert modular forms, and modular forms) but are unnecessary in those settings (since they have $q$-expansions).

As a consequence of the work in the first sections of this paper, we finish the problem of constructing $p$-adic families sufficient for the $p$-adic $L$-functions in \cite{EHLS}, completely eliminating conditions on signatures.  We also expect our results on Serre--Tate expansions to have applications to the extension to the setting of unitary groups of the results of A. Burungale and Hida on $\mu$-invariants \cite{BH}.

\subsubsection{Three main results}
Main Results 1, 2, and 3 below rely on a careful application of a combination of arithmetic geometric, representation theoretic, and number theoretic tools.  We denote by $V$ the space of $p$-adic automorphic forms (global sections of a line bundle over the Igusa tower, as defined by Hida in \cite{hida}) on a unitary group $G$.

\begin{mainresult}[summary of Theorem \ref{congruence} and Corollary \ref{cong-coro}]\label{MR-1}
For each classical or $p$-adic weight $\kappa$ (viewed as a character) meeting mild conditions, there is a $p$-adic differential operator $\Theta^\kappa$ acting on $V$, with the property that if the weight of $f\in V$ is $w$, then the weight of $\Theta^\kappa f$ is $w\cdot \kappa$, and if $\kappa\equiv\kappa'\mod p^e$ for some $e$, then $\Theta^\kappa f\equiv\Theta^{\kappa'}f\mod p^e$.  As a consequence, one can use the operators to obtain $p$-adic families of forms (which are closely related to certain $\ci$-automorphic forms, e.g.\ those appearing in Main Result \ref{MR-3}).
\end{mainresult}

\begin{mainresult}[summary of Theorem \ref{Theta} and Corollary \ref{cordef}]\label{MR-2}
While constructing and explicitly describing the action of $\Theta^\kappa$ on Serre--Tate expansions (also called $t$-expansions), we compute the precise polynomials (in the proof of Proposition \ref{cong1}) by which the coefficients in the expansion are multiplied upon applying the differential operators.
\end{mainresult}

\begin{mainresult}[summary of Theorem \ref{measurethm}]\label{MR-3}
There is a $p$-adic measure taking values in $V$ and providing an explicit family of $p$-adic automorphic forms closely related to the $\ci$ Eisenstein series studied by Shimura in \cite{sh}.
\end{mainresult}

\subsubsection{Methods}
The construction of the differential operators builds on earlier results on $p$-adic differential operators in \cite{kaCM} (for Hilbert modular forms), \cite{EDiffOps, emeasurenondefinite} (for unitary groups of signature $(n,n)$ and pullbacks to products of definite unitary groups), and \cite{pa-maass} (for Siegel modular forms).  Unlike in those earlier cases, though, the lack of $q$-expansions in the case of unitary groups not of signature $(n,n)$ necessitates modifying the approach of those papers.  Instead, we take expansions at ordinary CM points ({\it Serre--Tate expansions}) and apply the Serre--Tate Expansion Principle \cite[Theorem 5.14, Proposition 5.5, Corollary 5.16]{CEFMV}.  Also, unlike earlier constructions, by employing Hida's density theorem, we extend the action of the operators to $p$-adic (not necessarily classical) weights.

Our ability to establish congruences among differential operators depends on appropriately choosing the $p$-adic integral models for the algebraic representations associated with dominant weights.  In particular, our models are slightly different from those considered in the work of Hida \cite{hida}, and our construction relies on the theory of Schur functors and projectors.  The congruences follow from a careful analysis of the action of Schur functions (and especially the generalized Young symmetrizer) and rely on the description in Main Result \ref{MR-2}.  

In Section \ref{pullbacks-section}, we also extend Main Results \ref{MR-1} and \ref{MR-2} to the case of pullbacks from a Shimura variety to a subvariety.
While Main Results \ref{MR-1} and \ref{MR-2} focus on the description of the operators on Serre--Tate expansions, the precision with which we work out details for Serre--Tate expansions allows us also to transfer some of our results to devise a novel lifting argument (in the proof of Theorem \ref{measurethm}) concerning only $q$-expansions that produces explicit families of automorphic forms, summarized in Main Result \ref{MR-3}.  
(The key idea is to apply a lifting argument, together with the description of the action of the differential operators and pullbacks developed in Section \ref{pullbacks-section} and in the proof of Proposition \ref{cong1} to the Eisenstein series constructed in \cite{apptoSHL, apptoSHLvv}.)  These families feed into the machinery of $p$-adic $L$-functions in \cite{EHLS}.

\begin{rmk}
Although there are no $q$-expansions in the setting of unitary groups of arbitrary signature, these operators can naturally be viewed as the incarnation of Ramanujan's operator $q\frac{d}{dq}$ in this setting.  The families that can be obtained by applying such operators are broader than what can be obtained by tensoring with powers of a lift of the Hasse invariant, since our construction allows, for example, non-parallel weights.  
\end{rmk}

\begin{rmk}
It would also be beneficial to have a $p$-adic Fourier--Jacobi expansion principle for unitary groups of arbitrary signature (which appears to be possible to state and prove - via a lengthy, technical argument -  building on recent arithmetic geometric developments, e.g. \cite{lan}).  This would provide an alternate but ultimately more direct route (modulo the necessity of first proving such an expansion principle) to the construction of families.  On the other hand, we also expect our work with the Serre--Tate expansions themselves to be useful in other applications, e.g. an extension of Burungale and Hida's work in \cite{BH}.
\end{rmk}

\subsection{Structure of the paper}\label{structure-section}

Section \ref{background-section} introduces our setup and recalls key facts about unitary Shimura varieties, the Igusa tower, and $p$-adic and classical automorphic forms (following \cite[Sections 2 and 3]{CEFMV}).  It also provides necessary results on $t$-expansions from \cite{CEFMV} and an overview of Schur functors, on which our computations rely crucially.

Sections \ref{diffop-sec} and \ref{local-section} give global and local descriptions, respectively, of the differential operators.  In particular, Section \ref{diffop-sec} discusses differential operators of integral (classical) weights that act on the automorphic forms introduced in Section \ref{background-section}.   Using Schur functors, we build these operators from the Gauss--Manin connection and the Kodaira--Spencer morphism.  In Section \ref{local-section}, via a careful computation of the action of the Young symmetrizer, we describe the action of the differential operators on Serre--Tate coordinates. 

In Section \ref{mainresultsops-section}, we use the description of the action of the differential operators on $t$-expansions, and Hida's density theorem, to prove the operators extend to the whole space of $p$-adic automorphic forms.   We then establish congruences among operators of congruent weights, which we interpolate to differential operators of $p$-adic weights on the space of $p$-adic automorphic forms, leading to Theorem \ref{congruence} and Corollary \ref{cong-coro} (summarized in Main Result \ref{MR-1} above) and Theorem \ref{Theta} and Corollary \ref{cordef} (summarized in Main Result \ref{MR-2} above).

Section \ref{pullbacks-section} describes the behavior of the differential operators with respect to restriction from one unitary group to a product of two smaller unitary groups.  Restrictions of $p$-adic automorphic forms play a crucial role in the construction of $p$-adic $L$-functions in \cite{EHLS}.

Section \ref{families-section} constructs $p$-adic families of automorphic forms on unitary groups of arbitrary signature, by applying a novel lifting strategy and our $p$-adic differential operators to restrictions of $p$-adic families of Eisenstein series from \cite{apptoSHL, apptoSHLvv}.  Theorem \ref{measurethm} (summarized in Main Result \ref{MR-3} above) produces a $p$-adic measure taking values in the space of $p$-adic automorphic forms of arbitrary signature related to the given family of Eisenstein series.  This result is in turn used in the construction of $p$-adic $L$-functions in \cite{EHLS}.

\subsection{Notation and conventions}\label{notation-section}

Fix a totally real number field $\cmfield^+$ of degree $r$ and an imaginary quadratic extension $K_0$ of $\IQ$.  Define $\cmfield$ to be the compositum of $\cmfield^+$ and $K_0$. Additionally, we will fix a positive integer $n$, and a rational prime $p > n$ that splits completely in $\cmfield/\bQ$. If $K$ is an imaginary quadratic field, we put further restrictions when $n = 2$ (see Remark 2.3.2).

The above assumptions ensure the following:
\begin{itemize}
\item Our unitary group at $p$ is a product of (restrictions of scalars of) general linear groups 
\item The Shimura varieties of prime-to-$p$ level we consider have smooth integral models (with moduli interpretations) after localizing at $p$
\item Sections of automorphic bundles on open Shimura varieties coincide with those on their compactifications, by Koecher's principle.
\item The ordinary locus of the reduction modulo $p$ is not empty
\item We can $p$-adically interpolate our differential operators (see Proposition \ref{cong1} and Theorem \ref{congruence}).
\end{itemize}

We now discuss some notation used throughout the paper. For any field $L$, we denote the ring of integers in $L$ by $\cO_L$. We use $\bA$ to denote the adeles over $\bQ$, and we write $\bA^\infty$ (resp. $\bA^{\infty,p}$) to denote the adeles away from the archimedean places (resp. the archimedean places and $p$). For our CM field $\cmfield$, let $c$ denote complex conjugation, i.e. the generator of $\Gal(\cmfield/\cmfield^+)$.  We denote by $\arch$ the set of embeddings of $\cmfield^+$ into $\overline{\bQ}_p$, and we denote by $\archK$ the set of $\overline{\bQ}_p$-embeddings of $\cmfield$.  Additionally, fix a {\em CM type} of $K$, i.e. for each $\tau \in \Sigma$ choose exactly one $K$-embedding $\tilde{\tau}$ extending $\tau$, and abusing notation, identify the set of $\tilde{\tau}$ with $\Sigma$. Under this identification, note that $\Sigma \cup \Sigma^c = \Sigma_K$. 
Additionally, fix an isomorphism $\imath_p:\bC \stackrel{\sim}{\rightarrow} \overline{\bQ}_p$, and let $\Sigma_{\infty} = \imath_p^{-1} \Sigma$ and $\Sigma_{K,\infty} = \imath_p^{-1} \Sigma_K$. We will often identify $\Sigma_{\infty}$ with $\Sigma$ and  $\Sigma_{K,\infty}$ with $\Sigma_K$ via the above isomorphism without further mentioning.

The reflex field associated to our Shimura varieties will typically be denoted by $\reflex$ (with subscripts to denote different reflex fields in Section 6). Additionally, define the primes above $p$ using the decomposition of $p \cO_K = \prod_{i=1}^r \fP_i\fP_i^c$ where $\fp_i = \fP_i\fP_i^c$ are the primes above $p$ in $\cO_{K^+}$.

We denote the dual of an abelian scheme $A$ by $A\dual$.  We also denote the dual of a module $M$ by $M\dual$. Given schemes $S$ and $T$ over a scheme $U$, we denote the scheme $S\times_U T$ by $S_T$.  {When no confusion is likely to arise, we sometimes use the same notation for a sheaf of modules and a corresponding module (e.g. obtained by localizing at a point).}

For any ring $R$, we denote by $M_{n\times n}(R)$ the space of $n\times n$ matrices with entries in $R$, and we denote by $\hern(\cmfield)$ the space of Hermitian matrices inside $M_{n\times n}(\cmfield)$.

\section{Background and setup}\label{background-section}
In this section, we recall facts about Shimura varieties, automorphic forms, and $p$-adic automorphic forms that will play a key role in the rest of the paper.  Most of this material is covered in detail in \cite[Sections 2.1 and 2.2]{CEFMV}.  Like in \cite{CEFMV}, the definitions of the PEL data and moduli problems follow \cite[Sections 4 and 5]{kottwitz} and  \cite[Sections 1.2 and 1.4]{lan}.

\subsection{Unitary groups and PEL data}\label{PEL-data}

 By a {\it PEL datum}, we mean a tuple $\left(\cmfield, c, L, \langle, \rangle, h\right)$ consisting of
\begin{itemize}
\item the CM field $\cmfield$ equipped with the involution $c$ introduced in Section \ref{notation-section},
\item an $\OK$-lattice $L$, i.e. a finitely generated free $\ZZ$-module with an action of $\OK$,
\item a non-degenerate Hermitian pairing $\langle\cdot,\cdot\rangle:L\times L\to \mathbb{Z}$ satisfying $\langle k\cdot v_1,v_2 \rangle=\langle v_1,k^c\cdot v_2 \rangle$ for all $v_1, v_2\in L$ and $k\in \OK$, 
\item an $\mathbb{R}$-algebra endomorphism \[h:\mathbb{C}\to \mathrm{End}_{\OK\otimes_\mathbb{Z}\mathbb{R}}(L\otimes_{\mathbb{Z}}\mathbb{R})\] such that $(v_1,v_2)\mapsto \langle v_1,h(i)\cdot v_2\rangle$ is symmetric and positive definite and such that $\langle h(z) v_1, v_2\rangle = \langle v_1, h(\overline{z})v_2 \rangle$.
\end{itemize}
Furthermore, we require:
\begin{itemize}
\item{$L_p:=L\otimes_{\mathbb{Z}}\mathbb{Z}_p$ is self-dual under the alternating Hermitian pairing $\langle \cdot,\cdot\rangle_p$ on $L\otimes_{\mathbb{Z}}\mathbb{Q}_p$.} 
\end{itemize}
Given a PEL datum $\left(\cmfield, c, L, \langle, \rangle, h\right)$, we associate algebraic groups $GU= GU(L, \langle, \rangle)$, defined over $\ZZ$, whose $R$-points (for any $\ZZ$-algebra $R$) are given by
\begin{align*}
GU(R)&:=\left\{(g,\similitude)\in \mathrm{End}_{\OK\otimes_\mathbb{Z}R}(L\otimes_{\mathbb{Z}}R) \times R^\times\mid\langle g\cdot v_1,g\cdot v_2\rangle =\similitude\langle v_1,v_2\rangle\right\}\\
U(R)&:=\left\{g\in \mathrm{End}_{\OK\otimes_\mathbb{Z}R}(L\otimes_{\mathbb{Z}}R) \mid\langle g\cdot v_1,g\cdot v_2\rangle =\langle v_1,v_2\rangle\right\}
\end{align*} 
Note that $\similitude$ is called the {\it similitude factor}.
Additionally, we can define the $\IR$-vector space equipped with an action of $\cmfield:$ 
\begin{align*}
V:=L\otimes_{\ZZ}\IR.
\end{align*}
The endomorphism $h_{\bC} = h\times_{\mathbb{R}}\mathbb{C}$ gives rise to a decomposition $V_{\bC} := V\otimes_{\IR}\mathbb{C}=V_1\oplus V_2$ (where $h(z) \times 1$ acts by $z$ on $V_1$ and by $\bar{z}$ on $V_2$).  The {\it reflex field} $E$ of $(V, \langle, \rangle, h)$ is the field of definition of the $GU(\bC)$-conjugacy class of $V_1$.

We have further decompositions $V_1 = \oplus_{\tau\in\arch_K} V_{1, \tau}$ and $V_2 = \oplus_{\tau\in\arch_K}V_{2, \tau}$ 
induced from the decomposition of $K \otimes_{\bQ} \bC = \oplus_{\tau \in \Sigma_{K}} \bC$, where only the $\tau$-th $\bC$ acts nontrivially on $V_{\tau} = V_{1,\tau} \oplus V_{2,\tau}$, and it acts via the standard action on $V_{1,\tau}$ and via conjugation on $V_{2,\tau}$.  The {\it signature} of $\left(V, \langle, \rangle, h\right)$ is the tuple of pairs $\left(\siga_\tau, \sigb_\tau\right)_{\tau\in\arch_K}$ where $\siga_\tau = \dim_\IC V_{1, \tau}$ and $\sigb_\tau = \dim_\IC V_{2, \tau}$ for all $\tau\in\archK$. 
The sum $\siga_\tau+\sigb_\tau$ is independent of $\tau\in \arch_K$, and so we define
$$n:=\siga_\tau+\sigb_\tau.$$
(Note that $a_{+\tau^c} = a_{-\tau}$.)  Finally, we define an algebraic group 
$$\Levin :=\prod\limits_{\tau \in \arch_{}} \GL_{\siga_{\tau}} \times \GL_{\sigb_{\tau}}$$ over $\bZ$.  Note that $\Levin(\bC)$ can be identified with the Levi subgroup of $U({\bC})$ that preserves the decomposition $V_\bC=V_1\oplus V_2$. {Additionally, we will denote the {diagonal} maximal torus of $H$ by $T$, and the unipotent radical {of the Borel subgroup of upper triangular matrixes} by $N$.}

\subsection{PEL moduli problem and Shimura varieties}\label{PELmoduli-section}
We now introduce the Shimura varieties associated to a given PEL datum $(K,c,L,\langle,\rangle, h)$.  We will restrict our attention to the integral models (defined over $\cO_E \otimes \bZ_{(p)}$) of such PEL-type unitary Shimura varieties that have prime-to-$p$ level structure and good reduction at $p$. 

Let $\cpct\subset GU(\mathbb{A}^\infty)$ be an open compact subgroup. We assume $\cpct=\cpct^p\cpct_p$ is neat (as defined in \cite[Definition 1.4.1.8]{lan}) and that $\cpct_p\subset GU(\mathbb{Q}_p)$ is hyperspecial. 
Consider the moduli problem $(S,s)\mapsto \left\{\left(A,\ENDO,\polarization, \level\right)/\sim\right\}$
which assigns to every connected, locally noetherian scheme $S$ over $\cO_E\otimes \mathbb{Z}_{(p)}$ together with a geometric point $s$ of $S$, the set of equivalence classes of tuples $\left(A,\ENDO,\polarization, \level\right)$, where:
\begin{itemize}
\item $A$ is an abelian variety over $S$ of dimension $g := nr = n[K^+:\bQ]$, 
\item $i:\cO_{K,(p)}\hookrightarrow (\mathrm{End}(A))\otimes_{\mathbb{Z}}\mathbb{Z}_{(p)}$ is an embedding of $\mathbb{Z}_{(p)}$-algebras,
\item $\lambda: A\to A^\vee$ is a prime-to-$p$ polarization satisfying $\lambda \circ i({k^c})=i(k)^\vee \circ \lambda$  for all $k\in \cO_K$,
\item $\alpha$ is a $\pi_1(S,s)$-invariant $\cpct^p$-orbit of $K\otimes_{\mathbb{Q}}\mathbb{A}^{p,\infty}$-equivariant isomorphisms \[L\otimes_{\mathbb{Z}}\mathbb{A}^{p,\infty} \toisom V^{p}A,\] which takes the Hermitian pairing $\langle \cdot,\cdot \rangle$ on $L$ to an $(\mathbb{A}^{p,\infty})^\times$-multiple of the $\lambda$-Weil pairing $(\cdot,\cdot)_{\lambda}$ on $V^pA$ (the Tate module away from $p$). 
\end{itemize}
In addition, the tuple $\PELtuple$ 
must satisfy Kottwitz's \emph{determinant condition}: \[\mathrm{det}_\IC(\OK|V_1)=\mathrm{det}_{\cO_S}(\OK|\mathrm{Lie}A).\]

Two tuples $\PELtuple \sim \left(A',\ENDO',\polarization',\level'\right)$ are equivalent if there exists a prime-to-$p$ isogeny $A\to A'$ taking $\ENDO$ to $\ENDO'$, $\polarization$ to a prime-to-$p$ rational multiple of $\polarization'$ and $\level$ to $\level'$.

This moduli problem is representable by a smooth, quasi-projective scheme $\cM_\cpct$ over $\cO_E\otimes \mathbb{Z}_{(p)}$.  (See \cite[Corollary 7.2.3.10]{lan}.)  If we allow $\cpct^p$ to vary, the inverse system consisting of $\cM_\cpct$ has a natural action of $GU(\mathbb{A}^{\infty,p})$ (i.e., $g\in GU(\mathbb{A}^{\infty, p})$ acts by precomposing the level structure $\alpha$ with it). 
For any scheme $S$ over $\Spec(\cO_E\otimes \mathbb{Z}_{(p)})$, we put
\begin{align*}
\cM_{\cpct,S}:=\cM_{\cpct}\times_{\cO_{E,(p)}}S.
\end{align*} 
When $S = \Spec(R)$ for a ring $R$, we will often write $\cM_{\cpct,R}$ instead of $\cM_{\cpct,\Spec(R)}$.  

We denote by $\cM_\cpct^{\an}$ the complex manifold of $\IC$-valued points of $\cM_\cpct$ and by $\cM_\cpct\left(\ci\right)$ the underlying $\ci$-manifold.  Given a sheaf $\mathcal{F}$ on $\cM_\cpct$, we denote by $\mathcal{F}\left(\ci\right)$ the sheaf on $\cM_\cpct\left(\ci\right)$ obtained by tensoring $\mathcal{F}$ with the $\ci$-structural sheaf on $\cM_\cpct\left(\ci\right)$. {In the sequel, we fix the level $\cpct$  and suppress it from the notation.}

\subsection{Automorphic forms}\label{classical-aut-forms}
Let $\bW := W(\overline{\bF}_p)$ denote the ring of Witt vectors; note that $\Frac(\bW)$ contains all embeddings of $K \hookrightarrow \overline{\bQ}_p$, due to our assumptions on $p$.  Let $\pi: \Auniv =  \left(A,\ENDO,\polarization, \level\right)^{\univ}\rightarrow\cM_{\bW}$ 
 denote the universal abelian scheme. 
Define $\uo_{\Auniv/\cM} = \pi_{\ast}\underline{\Omega}_{\Auniv/\cM}$ as the pushforward along the structure map of the sheaf of relative differentials.  It is a locally free sheaf of rank $nr$,  equipped with the structure of an $\cO_K \otimes \bW$-module induced by the action of $\cO_K$ on $\Auniv$.  Hence, we obtain the decomposition:
\begin{align}\label{decomptau1}
 	\uo_{\Auniv/\cM} = \bigoplus_{\tau \in \arch} (\uo_{{\Auniv/\cM},\tau}^+ \oplus \uo_{{\Auniv/\cM},\tau}^-)
\end{align}
where $\uo_{{\Auniv/\cM},\tau}^{\pm}$ has rank $a_{\pm \tau}$ and an element $x \in \cO_K$ acts on $\uo_{{\Auniv/\cM},\tau}^+$ (resp. $\uo_{{\Auniv/\cM},\tau}^{-}$) via $\tau(x)$ (resp. $\tau^c(x)$). We can then define $\cE_\cpct = \cE$ as the sheaf:
 	 $$\cE = \bigoplus_{\tau\in \arch}\underline{\Isom}_{\mathcal{O}_{\cM}}\left((\mathcal{O}_{\cM})^{\sigp_\tau}, \uo_{\Auniv/{\cM}, \tau}^+\right) 
  \oplus \bigoplus_{\tau\in \arch}\underline{\Isom}_{\mathcal{O}_{\cM}}\left((\mathcal{O}_{\cM})^{\sigm_\tau}, \uo_{\Auniv/{\cM}, \tau}^-\right).$$
Note there is a (left) action of $H$ on $\cE$ arising from the action of $\GL_{a_{\pm \tau}}$ on $\underline{\Isom}\left(\left(\mathcal{O}_{\cM}\right)^{a_{\pm\tau}}, \uo_{\Auniv/{\cM}, \tau}^{\pm}\right)$ for all $K$-embeddings $\tau \in \Sigma$.

Consider an algebraic representation $\rho$ of $\Levin$ (over $\bW$) into a finite free $\bW$-module $M_{\rho}$.
 For any such $\rho$, we define the sheaf 
$\cE_{\rho}=\cE_{\cpct,\rho} := \cE \times^{\rho} M_{\rho}$, i.e. for each open immersion $\Spec {R} \hookrightarrow \cM$, set $\cE_{\rho}(R):=\left(\cE(R)\times M_{\rho}\otimes_\bW R\right)/\left(\lambdaiso, m\right)\sim\left(g\lambdaiso, \rho({ }^tg^{-1})m\right)$.

An {\em automorphic form of weight $\rho$} defined over a {$\Witt$-algebra} $R$ 
		 is a global section of the sheaf $\cE_{\rho}$ on $\cM_{R}$.

\begin{rmk} Usually, automorphic forms are defined as sections on a compactification of $\cM_{R'}$.  By Koecher's principle the two definitions are equivalent, except when $\Sigma$ consists only of one place $\tau$ and $(a_{+\tau},a_{-\tau})=(1,1)$.  For the remainder of the paper, we exclude this case.  \end{rmk}

\subsection{Standard representation, highest weights, and Schur functors}
We briefly recall some useful facts about $H$ from the theory of algebraic {(rational)}  representations of linear algebraic groups. 

\subsubsection{Highest weights of an algebraic representation of $H$}
The irreducible algebraic representations of $\Levin =\prod_{\tau \in \arch} \GL_{\siga_{\tau}} \times \GL_{\sigb_{\tau}}$ over {any algebraically closed field of characteristic 0} {(up to isomorphism)}  are in one-to-one correspondence with the dominant weights of its diagonal torus $T = \prod_{\tau \in \arch} T_{\siga_\tau} \times T_{\sigb_{\tau}}$. 

For $1 \leq i \leq n$, let $\eps^{\tau}_i$ in $X(T):=\Hom_{{\overline{\bQ}_p}}(T,\bG_m){=\Hom_{\bZ_p}(T,\bG_m)}$ be the character defined by
	\begin{eqnarray*}
	\eps^{\tau}_i: T({\overline{\bQ}_p}) = \prod_{\sigma \in \arch} T_{\siga_\sigma}(\overline{\bQ}_p) \times T_{\sigb_{\sigma}}(\overline{\bQ}_p) &\rightarrow &\bG_m(\overline{\bQ}_p) \vspace{2pt}\\ \eps^{\tau}_i(\diag(\gamma^{\sigma}_{1,1}, \hdots, \gamma^{\sigma}_{n,n})_{\sigma \in \arch}) &=& \gamma^\tau_{{i,i}}.
	\end{eqnarray*}
These characters form a basis of the free $\bZ$-module $X(T)$. We choose $\Delta=\{ \alpha^{\tau}_i := \eps^{\tau}_i - \eps^{\tau}_{i+1} \}_{\tau \in \arch, 1 \leq i < n , i \neq \siga_\tau}$ as a basis for the root system of $\Levin$. 
The set of all {\em dominant weights} of $T$ with respect to $\Delta$ is $X(T)_+=\{ \kappa \in X(T) \, | \, \< \kappa, \check{\alpha} \> \geq 0 \, \forall \alpha \in \Delta  \}$. Using the basis $\{\eps_{i}^{\tau} : \tau \in \Sigma, 1 \leq i \leq n \}$ of $X(T)$, we identify:
	$$X(T)_+ \cong \{(\kappa^{\tau}_1,\hdots, \kappa^{\tau}_{n})_{\tau \in \arch} \in  \prod\limits_{\tau \in \arch} \bZ^{n} \ : \kappa^{\tau}_i \geq \kappa^{\tau}_{i+1}\ \forall i \neq\siga_\tau\},$$
where $\kappa = \left(\kappa^\tau\right)_{\tau\in\Sigma}$ and $\kappa^\tau = \prod_{i}(\varepsilon^\tau_i)^{\kappa^\tau_i}$.  For each  dominant weight $\kappa$, $\rho_{\kappa}: \Levin{_{\overline{\bQ}_p}} \rightarrow M_{\rho}$ denotes {an} irreducible algebraic representation of highest weight $\kappa$.  (See, for example, \cite[Part~II.  Chapter~2]{Jantzen}.)

\subsubsection{Schur functors} \label{section-Schur} {We briefly recall the construction of Schur functors, adapted to our setting. (We refer to \cite[Sections 4.1 and 15.3]{FH} for the usual definitions).}

For $\kappa$ a positive dominant weight, i.e. $\kappa=(\kappa^{\tau}_1,\dots,\kappa^{\tau}_n)_{\tau \in \Sigma} \in X(T)_+$ satisfying $\kappa^{\tau}_i \geq \kappa_{i+1}^{\tau} \geq 0$ for all $\tau \in \Sigma$ and $i \neq \siga_\tau$, we write $d^{\tau+}_\kappa=|\kappa^{\tau+}| :=\sum_{i=1}^{\siga_\tau}\kappa^{\tau}_i$, $d^{\tau-}_\kappa=|\kappa^{\tau-}| :=\sum_{i=\siga_\tau+1}^n\kappa^{\tau}_i$, 
 and  regard {$\kappa^{\tau+} =(\kappa^{\tau}_1,\dots,\kappa^{\tau}_{\siga_\tau}) $ and $\kappa^{\tau-} =(\kappa^{\tau}_{\siga_\tau},\dots,\kappa^{\tau}_n) $ as a partition of $d^{\tau\pm}_\kappa$}.  When there is an integer $k$ such that $\kappa_i^\tau=k$ for all $i$ and $\tau$, we denote $\underline{k}:=\kappa$.  

To each $\kappa^{\tau{\pm}}$, there is an associated {\em Young symmetrizer} $c^{\tau{\pm}}_\kappa\in\bZ[\Symm_{d^{{\tau\pm}}_\kappa}]$ in the group algebra of the symmetric group $\Symm_{d^{{\tau\pm}}_\kappa}$ on $d^{{\tau\pm}}_{\kappa}$ symbols.  If $V$ is any module over a ring $R$, we let $\Symm_{d^{{\tau\pm}}_\kappa}$ act on the ${d^{{\tau\pm}}_\kappa}$-th tensor power $V^{\otimes {d^{{\tau\pm}}_\kappa}}$ {on the right} by permuting factors. This action extends to give a right-$\bZ[\Symm_{{d^{{\tau\pm}}_\kappa}}]$-module structure on $V^{\otimes {d^{{\tau\pm}}_\kappa}}$.

We define the \textit{$\kappa^{{\tau\pm}}$-Schur functor} on the category of $R$-modules 
\[\bS_{\kappa^{{\tau\pm}}}(V):=V^{\otimes d_{\kappa}^{{\tau\pm}}}\cdot c^{{\tau\pm}}_\kappa\subset V^{\otimes d_{\kappa}^{{\tau\pm}}}.\]

{We now assume $R$ {is a {$\bZ_p$}-algebra or an algebraically closed field of characteristic 0.} 	Then  for each $\kappa^{{\tau\pm}}$,  $\bS_{\kappa^{{{\tau\pm}}}}(R^{\sigpm_\tau})$ is {an} irreducible representation of $\GL_{\sigpm_\tau}$ with highest weight $\kappa^{{\tau\pm}}$ (\cite[Proposition 15.15 {and Proposition~15.47}]{FH}).

 Let $V=\bigoplus_{\tau \in \Sigma}\left(V^{+,\tau} \oplus V^{-,\tau}\right)$ be an $R$-module, together with such a decomposition.  We define the \textit{$\kappa$-Schur functor}, for $\kappa$ a positive dominant weight,
by 
\begin{eqnarray*}
\bS_\kappa(V) &:=& \boxtimes _{\tau \in \arch} \left(\bS_{\kappa^{\tau+}}(V^{+,\tau}) \boxtimes  \bS_{\kappa^{\tau-}}(V^{-,\tau}) \right) \\
&=& \left(\boxtimes _{\tau \in \arch} \left((V^{+,\tau})^{\otimes d_\kappa^{\tau+}} \boxtimes  (V^{-,\tau})^{\otimes d_\kappa^{\tau-}} \right) \right) c_\kappa \subset V^{\otimes d_\kappa}, 
\end{eqnarray*}
	 where $d_\kappa=\sum\limits_{\tau \in \Sigma}\left(d_\kappa^{\tau+} + d_\kappa^{\tau-}\right)$ and $c_\kappa=\otimes_{\tau \in \Sigma}\left(c_\kappa^{\tau+} \otimes {c}_\kappa^{\tau-}\right)$. We call $c_\kappa$ the {\em generalized Young symmetrizer}.
	 }

We now consider the case of non-positive dominant weight.
If $\kappa^{\tau \pm}$ is a dominant weight, but $\kappa_{\siga\tau}^\tau<0$ we define \[\bS_{\kappa^{\tau \pm}}(V):=\bS_{\kappa^{\tau \pm}-(\kappa_{\siga\tau}^\tau, \hdots, \kappa_{\siga\tau}^\tau)}(V) \otimes \det(V)^{\kappa^\tau_{\siga\tau}}.\] Similarly, if $\kappa^{\tau \pm}$ is dominant, but  $\kappa_{n}^\tau<0$, we define
$\bS_{\kappa^{\tau \pm}}(V):=\bS_{\kappa^{\tau \pm}-(\kappa_n^\tau, \hdots, \kappa^\tau_n)}(V) \otimes \det(V)^{\kappa^\tau_n}$. This allows us to extend the definition of the Schur functor $\bS_\kappa$ to all dominant weights $\kappa$. 

Throughout this paper,  for each dominant weight $\kappa$, we denote the irreducible representation $\bS_\kappa\left(\bigoplus_{\tau \in \Sigma}\left(\bZ_p^{\siga_\tau} \oplus \bZ_p^{\sigb_\tau}\right)\right)$ of $H_{\bZ_p}$ of highest weight $\kappa$ by $\rho_\kappa$. 
In the following, we sometimes write $(\cdot)^{\rho_\kappa}$ for  $\bS_\kappa(\cdot)$, and also $\cE_{\cpct,\kappa}$ (resp. $\cE_{\kappa}$) in place of $\cE_{\cpct,{\rho{_{\kappa}}}}$ (resp. $\cE_{\rho_{{\kappa}}}$).

\begin{rmk}\label{ev-def}
In \cite{CEFMV} (see \cite[Remark~3.5]{CEFMV}) the symbol $\rho_\kappa$ is used for the  representation
	$\wt \rho_\kappa := \Ind_{B^-}^{H_{\bZ_p}} (-\kappa)$
 of $H_{\bZ_p}$ of highest weight $\kappa$, where $B^-$ is the Borel subgroup containing $T$ corresponding to lower triangular matrices in $H_{\bZ_p}$. For each dominant weight $\kappa$, the two representations $\rho_\kappa$ and $\wt\rho_\kappa$ are isomorphic over $\bQ_p$, and by Frobenius reciprocity
 $$ \Hom_{H_{\bZ_p}}(\rho_\kappa, \wt \rho_\kappa)=\Hom_{H_{\bZ_p}}\left(\rho_\kappa, \Ind_{B^-}^{H_{\bZ_p}} (-\kappa)\right) \simeq \Hom_{B^-}\left(\rho_\kappa, \kappa\right) {(\simeq \bZ_p )}.$$
The isomorphism between the left and right hand side is given by composition with $\ev_{\kappa}$, where $\ev_\kappa$ is defined by $\ev_\kappa(f)=f(1)$ for $f \in \Ind_{B^-}^{H_{\bZ_p}} (-\kappa)$.
 In particular, a choice  $\{\lcan^\kappa\}$ of a $\bZ_p$-basis of $\Hom_{B^-}\left(\rho_\kappa, \kappa\right)$ yields an injection $i_\kappa$ from $\rho_\kappa$ into $\wt \rho_\kappa$ such that $\lcan^\kappa=\ev_{\kappa} \circ i_\kappa$. (Note that $\ev_\kappa$ is denoted $\lcan$ in \cite{CEFMV}.) 
\end{rmk}

\subsubsection{Projection onto highest weight representations}\label{defpi}\label{section-Schur2} We will use the material from this section to construct and study differential operators on $p$-adic automorphic forms.  For comparison, we note that a discussion of differential operators on $\ci$ automorphic forms and the description of highest weights in that case is in \cite[Section 12.1]{Shimura} and \cite{shclassical}; a related (but briefer) description also is available in \cite[Section 23]{sh}.
In this section, we denote the standard representation $\bigoplus_{\tau \in \Sigma}\left(\bZ_p^{\siga_\tau} \oplus \bZ_p^{\sigb_\tau}\right)$
 of $H_{\bZ_p}$ by $V$. 
 
Let $\fB= \cup_{\tau} 
\{b_{\tau, 1}, \hdots, b_{\tau, n} \}$ be the standard basis of $V$, 
and $\fB^\vee= \cup_{\tau} \{b_{\tau, 1}^\vee, \hdots, b_{\tau, n}^\vee \}$ be the corresponding dual basis. 
For each positive dominant weight $\kappa$, we write
 \[\pi_{\kappa}:V^{\otimes d_{\kappa}} \twoheadrightarrow \rho_\kappa\] 
for the surjection obtained from projecting onto summands and applying the generalized Young symmetrizer.

\begin{defn}\label{lcan-def}
For each positive dominant weight $\kappa$, we define $\lcan^\kappa$ to be the $\bZ_p$-basis of $\Hom_{B^-}\left(\rho_\kappa, \kappa\right)$  such that 
 \begin{equation} \label{eqn-lcan-formula}
 	 {\tilde{\ell}_{\rm can}^{\kappa} := \lcan^\kappa \circ \pi_\kappa =}  \prod_{\tau \in \Sigma} \prod_{i=1}^n (\kappa_i^\tau !)^{-1} \cdot \bigotimes_{\tau \in \Sigma} \bigotimes_{i=1}^{n} (b_{\tau,i}^\vee)^{\otimes \kappa_i^\tau} \cdot c_\kappa.
 \end{equation}
 \end{defn}

We have chosen the above normalization so that Remark \ref{aipm} holds.

{\begin{defn} \label{sum-sym-def}
A weight $\kappa=(\kappa_1^\tau, \hdots, \kappa_n^\tau)_{\tau \in \Sigma}$ is called {\em sum-symmetric} if $\kappa$ is positive dominant and $d_\kappa^{\tau+}=d_\kappa^{\tau-}$ for all $\tau \in \Sigma$, where $d_\kappa^{\tau+}=\sum_{i=1}^{\siga_\tau}\kappa_i^\tau$ and $d_\kappa^{\tau-}=\sum_{i=\siga_\tau+1}^{n}\kappa_i^\tau$. A representation $\rho$ is called {\em sum-symmetric} if it is isomorphic to $\rho_\kappa$ for some sum-symmetric weight $\kappa$.
		In this case we call $e_\kappa=\sum_{\tau \in \Sigma} d_\kappa^{\tau+} = d_\kappa/2$ the {\em depth} of $\kappa$ or of the representation $\rho_\kappa$.
		\end{defn}
		\begin{defn}\label{sym-def}
		A weight $\kappa$ is called {\em symmetric} if $\kappa$ is sum-symmetric 
		and for all $\tau \in \Sigma$ we have $$\kappa_i^\tau=\kappa_{\siga_\tau+i}^\tau \text{ for all } 1 \leq i \leq \min(\siga_\tau, \sigb_\tau).$$ 
\end{defn} 
\begin{rmk}\label{sum-sym}
		If $\kappa$ is sum-symmetric of depth $e_\kappa$, then, by the Schur functor construction, the representation $\rho_{\kappa}$ of $H_{\bZ_p}$ of highest weight $\kappa$ is a {quotient} of $(\bigoplus_{\tau \in \Sigma}\left(\bZ_p^{\siga_\tau} \otimes \bZ_p^{\sigb_\tau}\right))^{\otimes e_\kappa}$. 
\end{rmk}}

\begin{lem} \label{Lemma-lcan-composition}\label{lemma-Schur2}
	Let  $\kappa$ be a positive dominant weight, let $\kappa'$ be a sum-symmetric weight. Then the projection  $\pi_{\kappa\kappa'}: V^{\otimes d_{\kappa\kappa'}} \twoheadrightarrow {\rho_{\kappa\kappa'}}$ factors through the map
	$$ \pi_{\kappa} \otimes \pi_{\kappa'}: V^{\otimes d_{\kappa\kappa'}}  \simeq V^{\otimes d_{\kappa}} \otimes V^{\otimes d_{\kappa'}} \twoheadrightarrow {\rho_{\kappa}} \otimes {\rho_{\kappa'}}. $$
	Moreover, if we denote the resulting projection $ {\rho_{\kappa}} \otimes {\rho_{\kappa'}} \twoheadrightarrow \rho_{\kappa\kappa'}$ by $\pi_{\kappa,\kappa'}$, then 
	\begin{equation} \label{eqn-lcan-composition}
		\lcan^\kappa \otimes \lcan^{\kappa'} = \lcan^{\kappa\kappa'} \circ \pi_{\kappa,\kappa'},
		\end{equation}
and \begin{equation} \label{eqn-tilde-lcan-composition}
		{\tilde{\ell}}_{\rm can}^\kappa \otimes {\tilde{\ell}}_{\rm can}^{\kappa'} = {\tilde{\ell}}_{\rm can}^{\kappa\kappa'}.
		\end{equation}
\end{lem}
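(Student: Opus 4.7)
Here is the plan. I will first establish the factorization $\pi_{\kappa\kappa'} = \pi_{\kappa,\kappa'} \circ (\pi_\kappa \otimes \pi_{\kappa'})$, then deduce both identities from it via a Frobenius reciprocity / uniqueness argument.

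For the factorization, I will identify $V^{\otimes d_{\kappa\kappa'}}$ with $V^{\otimes d_\kappa} \otimes V^{\otimes d_{\kappa'}}$ by the natural reshuffling of tensor factors: for each $(\tau,\pm)$, the $d_\kappa^{\tau\pm}$ factors of type $V^{\pm,\tau}$ coming from $\kappa$ precede the $d_{\kappa'}^{\tau\pm}$ such factors from $\kappa'$.  Under this identification, $\pi_{\kappa\kappa'}$ and $\pi_\kappa \otimes \pi_{\kappa'}$ are both $H$-equivariant surjections, onto $\rho_{\kappa\kappa'}$ and $\rho_\kappa \otimes \rho_{\kappa'}$ respectively.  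The tensor $v_\kappa^+ \otimes v_{\kappa'}^+$ of highest weight vectors generates a copy of $\rho_{\kappa\kappa'}$ inside $\rho_\kappa \otimes \rho_{\kappa'}$ (the Cartan component, of multiplicity one), yielding a canonical projection $\pi_{\kappa,\kappa'}: \rho_\kappa \otimes \rho_{\kappa'} \twoheadrightarrow \rho_{\kappa\kappa'}$.  The factorization then reduces to the group-algebra identity $c_{\kappa\kappa'} = (c_\kappa \otimes c_{\kappa'}) \cdot X$ for some $X \in \bZ[\Symm_{d_{\kappa\kappa'}}]$, where $c_\kappa \otimes c_{\kappa'}$ is embedded via the natural inclusion $\Symm_{d_\kappa} \times \Symm_{d_{\kappa'}} \hookrightarrow \Symm_{d_{\kappa\kappa'}}$.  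This containment is verified by decomposing the row symmetrizer of $\kappa\kappa'$ as a sum over shuffles of the row symmetrizers for $\kappa$ and $\kappa'$ (since row $i$ of $\kappa+\kappa'$ is the concatenation of row $i$ of $\kappa$ and row $i$ of $\kappa'$), and then analyzing the column antisymmetrizer, where the sum-symmetric hypothesis on $\kappa'$ provides the necessary compatibility between the column structures.

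With the factorization in hand, both $\lcan^\kappa \otimes \lcan^{\kappa'}$ and $\lcan^{\kappa\kappa'} \circ \pi_{\kappa,\kappa'}$ are nonzero elements of $\Hom_{B^-}(\rho_\kappa \otimes \rho_{\kappa'}, \kappa\kappa')$; by Frobenius reciprocity combined with multiplicity-one of the Cartan component, this space is one-dimensional, so the two elements are proportional.  The proportionality constant is computed by evaluating both on the image of the tensor of standard weight vectors $\bigotimes_{\tau,i} b_{\tau,i}^{\otimes \kappa_i^\tau} \otimes \bigotimes_{\tau,i} b_{\tau,i}^{\otimes (\kappa')_i^\tau}$ and using the explicit formula \eqref{eqn-lcan-formula}: the multinomial identity $(\kappa_i^\tau + (\kappa')_i^\tau)! = \binom{\kappa_i^\tau + (\kappa')_i^\tau}{\kappa_i^\tau} \, \kappa_i^\tau!\,(\kappa')_i^\tau!$ makes the normalization factors cancel, and the constant equals $1$.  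This yields \eqref{eqn-lcan-composition}, and then \eqref{eqn-tilde-lcan-composition} follows formally from the factorization:
\begin{align*}
\tilde{\ell}_{\rm can}^\kappa \otimes \tilde{\ell}_{\rm can}^{\kappa'}
&= (\lcan^\kappa \circ \pi_\kappa) \otimes (\lcan^{\kappa'} \circ \pi_{\kappa'}) = (\lcan^\kappa \otimes \lcan^{\kappa'}) \circ (\pi_\kappa \otimes \pi_{\kappa'}) \\
&= \lcan^{\kappa\kappa'} \circ \pi_{\kappa,\kappa'} \circ (\pi_\kappa \otimes \pi_{\kappa'}) = \lcan^{\kappa\kappa'} \circ \pi_{\kappa\kappa'} = \tilde{\ell}_{\rm can}^{\kappa\kappa'}.
\end{align*}

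The main obstacle is the combinatorial identity $c_{\kappa\kappa'} \in (c_\kappa \otimes c_{\kappa'}) \cdot \bZ[\Symm_{d_{\kappa\kappa'}}]$ underlying the factorization.  Its verification requires carefully tracking how the row and column operators of the Young diagram for $\kappa+\kappa'$ interact with those for $\kappa$ and $\kappa'$: the row decomposition is clean (via shuffles), but the column antisymmetrization is subtler because columns of $\kappa+\kappa'$ can mix boxes of $\kappa$ and $\kappa'$.  The sum-symmetric hypothesis on $\kappa'$ is the essential ingredient that controls this mixing and makes the column decomposition compatible with the right-ideal structure.
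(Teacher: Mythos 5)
Your architecture runs in the opposite logical direction from the paper's, and the step you yourself flag as ``the main obstacle'' is exactly the part you have not proven. You propose to first establish the factorization $\pi_{\kappa\kappa'} = \pi_{\kappa,\kappa'}\circ(\pi_\kappa\otimes\pi_{\kappa'})$ by verifying a group-algebra containment $c_{\kappa\kappa'} \in (c_\kappa\otimes c_{\kappa'})\cdot\bZ[\Symm_{d_{\kappa\kappa'}}]$, and only then deduce \eqref{eqn-lcan-composition} by a multiplicity-one argument. But the containment is only sketched (``decomposing the row symmetrizer as a sum over shuffles \dots then analyzing the column antisymmetrizer''), and the column analysis is genuinely delicate: with the natural filling the columns of $\kappa+\kappa'$ interleave boxes of $\kappa$ and $\kappa'$, and a statement of the form $a_{\lambda+\mu}b_{\lambda+\mu}\in (b_\lambda\otimes b_\mu)\cdot\bZ[\Symm_d]$ does not follow from any standard right-ideal manipulation. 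Worse, your stated reason for why it should work --- that the sum-symmetric hypothesis on $\kappa'$ ``controls the mixing'' of columns --- is a misattribution: sum-symmetry is a relation between $d_{\kappa'}^{\tau+}$ and $d_{\kappa'}^{\tau-}$ across the $+/-$ blocks and has no bearing on the internal column structure of the partition $\kappa'^{\tau\pm}$ within a single $\GL_{a_{\pm\tau}}$ factor; what the argument actually needs there is only that $\kappa$ and $\kappa'$ are positive dominant. So as written the proof has a hole precisely at its load-bearing step.

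The paper avoids this combinatorics entirely by reversing the order. It \emph{defines} $\pi_{\kappa,\kappa'}$ a priori as $(i_{\kappa\kappa'})_{\bQ_p}^{-1}\circ\wt\pi_{\kappa,\kappa'}\circ((i_\kappa)_{\bQ_p}\otimes(i_{\kappa'})_{\bQ_p})$, where $\wt\pi_{\kappa,\kappa'}$ is multiplication of functions $f\otimes f'\mapsto ff'$ on the induced models $\wt\rho_\kappa=\Ind_{B^-}^{H_{\bZ_p}}(-\kappa)$; then \eqref{eqn-lcan-composition} is essentially true by construction since $\lcan^\kappa=\ev_\kappa\circ i_\kappa$ and $\ev_\kappa\otimes\ev_{\kappa'}=\ev_{\kappa\kappa'}\circ\wt\pi_{\kappa,\kappa'}$. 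The factorization is then \emph{deduced}: one checks from the explicit formula \eqref{eqn-lcan-formula} that $\lcan^{\kappa\kappa'}\circ\pi_{\kappa,\kappa'}\circ(\pi_\kappa\otimes\pi_{\kappa'})=\lcan^{\kappa\kappa'}\circ\pi_{\kappa\kappa'}$ (a computation with the functionals $\tilde\ell_{\rm can}$, far easier than an identity in $\bZ[\Symm_d]$), and Frobenius reciprocity upgrades this to $\pi_{\kappa,\kappa'}\circ(\pi_\kappa\otimes\pi_{\kappa'})=\pi_{\kappa\kappa'}$; integrality of $\pi_{\kappa,\kappa'}$ follows from that of $\pi_{\kappa\kappa'}$ and surjectivity of $\pi_\kappa\otimes\pi_{\kappa'}$. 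If you want to salvage your route, you should either prove the group-algebra containment honestly or adopt this ``define the map on induced representations, compare functionals, then apply Frobenius reciprocity'' strategy, which reduces the whole lemma to the multinomial-type computation you already carry out correctly at the end.
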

\begin{proof}
Recall the injection $i_\kappa: {\rho_\kappa} \ra \wt \rho_\kappa=\Ind_{B^-}^{H_{\bZ_p}}(-\kappa)$ defined in Remark \ref{ev-def}, and note that $(i_\kappa)_{\bQ_p}$ is an isomorphism. Let $\wt \pi_{\kappa, \kappa'}: \wt \rho_{\kappa} \otimes \wt \rho_{\kappa'} \ra \wt \rho_{\kappa\kappa'}$ be the projection obtained by $f \otimes f' \mapsto ff'$, and define $\pi_{\kappa, \kappa'}:{\rho_\kappa} \otimes {\rho_{\kappa'}} \ra {\rho_{\kappa\kappa'}}$ to be the composition $$(i_{\kappa\kappa'})_{\bQ_p}^{-1} \circ \wt \pi_{\kappa, \kappa'} \circ ((i_{\kappa})_{\bQ_p} \otimes (i_{\kappa'})_{\bQ_p}).$$ 
Then we obtain after base change to $\bQ_p$ that 
$$\lcan^\kappa \otimes \lcan^{\kappa'}= \ev_\kappa \circ i_{\kappa} \otimes \ev_{\kappa'} \circ i_{\kappa'} = \ev_{\kappa\kappa'} \circ \wt \pi_{\kappa,\kappa'} \circ (i_{\kappa} \otimes i_{\kappa'}) =\lcan^{\kappa\kappa'} \circ \pi_{\kappa,\kappa'}.$$ 
Using Equation \eqref{eqn-lcan-formula} and the definition of the action of $c_\kappa$, we deduce that $$\lcan^{\kappa\kappa'} \circ \pi_{\kappa, \kappa'} \circ (\pi_{\kappa}\otimes\pi_{\kappa'}) = \lcan \circ \pi_{\kappa\kappa'}. $$
Thus by Frobenius reciprocity $\pi_{\kappa, \kappa'} \circ (\pi_{\kappa}\otimes \pi_{\kappa'}) = \pi_{\kappa\kappa'}$, and it only remains to check that $\pi_{\kappa,\kappa'}$ is defined over $\bZ_p$. However, this follows from $\pi_{\kappa\kappa'}$ being defined over $\bZ_p$ and the surjectivity of $\pi_{\kappa} \otimes \pi_{\kappa'}$.
\end{proof}

\subsection{The Igusa tower over the ordinary locus}\label{Igusalevel} 

In this section, we introduce the Igusa tower as a tower of finite \'etale Galois covers of the ordinary locus of a Shimura variety. This construction is due to Hida in \cite[Section 8.1]{hida} (see also \cite[Section 4.1]{CEFMV}).
{We recall that our Shimura varieties have hyperspecial level at $p$ and neat level away from $p$  (and that we suppressed the level from the notation).}

We fix a place $P$ of $E$ above $p$ and denote the residue field of $\cO_{E_P} \subset E_P$ by $k$.
Abusing notation, we will still denote the base change of $\cM$ to $\cO_{E_P}$ by $\cM$.  

Let  $\overline{\cM}^{\ord}$ over $k$ be the ordinary locus of $\overline{\cM} = \cM \otimes_{\cO_{E_P}} k$, and $\cM^{\ord}$ over $\cO_{E_P}$ be the ordinary locus of $\cM$ as defined in \cite[Definition 3.4.1.1]{lan4}. Then $\overline{\cM}^{\ord}= {\cM}^{\ord}\otimes_{\cO_{E_P}} k$.
{For each $m\geq 1$, the scheme $\Mord\otimes_{\cO_{E_P}} \Witt/{p}^m \Witt$ agrees with the locus where a lift of (a sufficiently large power of) the Hasse invariant does not vanish. }
Since we assume $p$ splits completely in $K$ (which implies that $p$ splits in the reflex field $E$), $\Mord$ is nonempty, in fact it is open and dense. 
 We fix  a connected component $\Sord$  of ${\Mord_\Witt :=} \Mord\times_{\cO_{E_P}}\Witt$. 
Equivalently, $\Sord$ is the ordinary locus of a fixed connected component $\mathcal{S}$ of $\mathcal{M}_\Witt$.

Let $\cA^{\ord}:=\cA_{\univ/\Sord}$ be the universal (ordinary) abelian variety over $\Sord$. Pick a $\bW$-point $x$ of $\Sord$, and  denote by $\bar x$ the underlying $\overline{\bF}_p$-point.  We can identify the $\zz_p$-lattice $L_p$ (defined in Section \ref{PEL-data}) 
with the $p$-adic Tate module of $\cA^{\ord}_x[p^\infty]$.  Choose such an identification $L_p\simeq T_p(\cA^{\ord}_x[p^\infty])$, compatible with the $\cO_K$-action and identifying the Hermitian pairing with the Weil pairing.
Then, the kernel of the reduction map 
	$$T_p(\cA^{\ord}_x[p^\infty]) \to T_p(\cA^{\ord}_{\bar{x}}[p^\infty]^{\et})$$ 
determines an $\cO_K$-submodule $\cL \subset L_p$.  Using the self-duality of $L_p$ under the Hermitian pairing $\langle\cdot,\cdot\rangle$ and its compatibility with the $\lambda$-Weil-pairing $(\cdot,\cdot)_{\lambda}$, we can identify the dual $\cL^\vee$ of $\cL$ with the orthogonal complement of $\cL$ inside $L_p$.   Note that $\cL$ decomposes as 
	$$\cL=\oplus_{\tau \in \Sigma}(\cL_\tau^+\oplus\cL_\tau^-).$$
In the sequel, we write
	\begin{equation}\label{Lsquared}
	\cL^2:=\oplus_{\tau \in \Sigma}\cL^+_\tau\otimes\cL_\tau^-.
	\end{equation}

We now introduce the {\it Igusa tower} over the $p$-adic completion of $\Sord$. 
For each $m\in\mathbb{Z}_{\geq 1}$, we write $\Sordm := \Sord\times_{\Witt}\Witt/p^m\Witt$. For each $n,m\in\mathbb{Z}_{\geq 1}$, consider the functor 
\begin{align*}
\mathrm{Ig}_{n,m}: \left\{\mathrm{Schemes}/\Sordm\right\}\rightarrow\left\{\mathrm{Sets}\right\}
\end{align*}
that takes an $\Sordm$-scheme $S$ to the set of $\OK$-linear closed immersions \[\iota_n:\cL \otimes_{\mathbb{Z}_p}\mu_{p^n} \hookrightarrow \cA_S[p^n],\] where $\cA_S:=\cA^{\ord}\times_{\Sord}S$. This functor is represented by an $\Sordm$-scheme, which {by abuse of notation} we also denote by $\mathrm{Ig}_{n,m}$. For each $n\geq 1$, $\mathrm{Ig}_{n,m}$ is a finite \'etale and Galois cover of $\Sordm$, whose Galois group is the group of $\cO_K$-linear automorphisms of $\cL/p^n\cL$.

For each $n\geq 1$, we define the formal scheme $\Ig_n:=\varinjlim_m \Ig_{n,m}$. Equivalently, we define $\Ig_n$ as the formal completion along the special fiber of the scheme representing the functor that takes a $\Sord$-schemes $S$ to the set of $\cO_K$-linear closed immersions $\iota_{n}: \cL \otimes_{\bZ} \mu_{p^n} \hookrightarrow \cA_S[p^n]$. 

Finally, we define the {\em infinite Igusa tower} $\Ig$ as $\Ig:=\varprojlim_n \Ig_{n}$.  Recall that inverse limit of projective system of formal schemes, with affine transition maps, exists in the category of formal schemes (see \cite[Proposition D.4.1]{fargues}.)  Thus, $\Ig$ exists as formal scheme, and is a pro-\'etale cover of the formal completion of $\Sord$ along its special fiber, with Galois group the group of $\cO_K$-linear automorphisms of $\cL$, which we identify with $H(\bZ_p)$.
For any point $x_0$ of the formal completion of $\Sord$ along its special fiber (e.g., $x_0\in\Sord(\Witt)$ or $\Sord(\overline{\bF}_p)$), 
the choice of a point $x$ of $\Ig$ lying above $x_0$ is equivalent to the choice of an Igusa structure of infinite level on ${\cA}_{x_0}$, i.e. of an $\cO_K$-linear closed immersion of Barsotti-Tate groups
$\iota_x:\cL \otimes_{\bZ_p} \mu_{p^\infty}\hookrightarrow {\cA}_{x_0}[p^\infty]$.
In the following, we write
	$$\iota:\cL \otimes_{\bZ_p} \mu_{p^\infty}\hookrightarrow {\cA}^{\ord}[p^\infty]$$ for the universal Igusa structure of infinite level on $\cA^{\ord}$ over $\Igusa$.

\subsection{$p$-adic automorphic forms}\label{padicaut-section}\label{PSI}
Following Hida \cite[Section 8.1]{hida}, we define $p$-adic automorphic forms as global functions on the Igusa tower (see also \cite[Section 4.2]{CEFMV}). 
For all $n,m\in\bZ_{\geq1}$, let
	$$V_{n,m} := H^0(\Ig_{n,m},\cO_{\Ig_{n,m}}),$$
we write $V_{\infty,m}:=\varinjlim_n V_{n,m}$, and $V_{\infty,\infty}:=\varprojlim_m V_{\infty,m}$. 

Note that the space $V_{\infty,\infty}$ is endowed with a left action of $\Levin(\zz_p)$, $f\mapsto g\cdot f$, induced by the natural right action of $g \in \Levin (\zz_p)$ on the Igusa tower.

We call 
$V^N :=V_{\infty,\infty}^{N(\bZ_p)}$ the {\it space of $p$-adic automorphic forms}.

The above definition is motivated by the existence of an embedding of the space of $p$-adic automorphic forms, regarded as global sections of automorphic vector bundles on $\Sord$, into $V^N$. We briefly recall the construction ({\cite[Section 8.1.2]{hida})  adapted to our setting.

Fix $n\geq m \geq 0$, and $\kappa$ any dominant weight.  Let  $V_{n,m}^N[\kappa]$ denote the $\kappa$-eigenspace of the action of torus on $V_{n,m}^{N(\bZ_p)}$.
We define a map 
$$\Psi_{n,m}^\kappa: H^0\left({\Sordm}, \Ekap\right) \rightarrow V_{n,m}^N[\kappa]$$ as follows.
We regard each $f\in H^0\left({\Sordm}, \Ekap\right)$  as a function 
$\left(\uA, j \right)\mapsto f\left(\uA, j\right)\in \rho_\kappa(\Witt_m)$
on pairs  $(\uA,j)$, where $\uA=\underline{\cA}^\ord_{x_0}$ is an abelian variety associated to a point $x_0$ of $\Sordm$, and $j$ is a
the trivialization of  $\uo_{\uA}=\uo_{\underline{\cA}^{\ord}/\cS^\ord_m, x_0}$.
Using the canonical isomorphism
	$$\uo_{\cA^{\ord}/\cS^{\ord}_m} \cong \cA^{\ord}[p^n]^{\et} \otimes \cO_{\cS^{\ord}_m},$$
to each Igusa structure $\iota$ on $\uA$ we associate a trivialization $j_\iota$ of $\uo_{\uA}$. Finally, we define 
$ \Psi_{n,m}({f})\in V_{n,m}^N[\kappa]$ as the function $(\uA, \iota) \mapsto \ell_{\rm can}^\kappa(f(\uA, j_\iota))$.
As $n,m$ vary, with $n\geq m$, we obtain a map
\begin{align}\label{psik-map}
\Psi_\kappa:H^0\left({\Sord}, \Ekap\right)\rightarrow V^N[\kappa],
\end{align}
where $V^N[\kappa]$ denotes the $\kappa$-eigenspace of the action of the torus on $V^N$.

We define
\begin{align}
\Psi:\bigoplus_{\kappa \in X(T)_+} H^0(\Sord,\Ekap) \ra V^N
\end{align}
to be the {linear} map whose restriction to $H^0(\Sord,\Ekap)$ is $\Psi_\kappa$.

{\begin{thm}{\cite[Prop.~8.2 \& Thm.~8.3]{hida}}\label{density}
	The map $\Psi_\kappa$ is injective, and after inverting $p$, the image of $\Psi$ 
	$$\Psi\left( \bigoplus_{\kappa \in X(T)_+, \atop \kappa \text{ positive }} H^0(\Sord,\Ekap)\right)\left[\frac{1}{p}\right] \cap V^N$$
	is $p$-adically dense in $V^N$. 	
\end{thm}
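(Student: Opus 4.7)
The plan is to treat injectivity and density separately, following Hida's original strategy adapted to our setting.

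For injectivity of $\Psi_\kappa$, regard $f \in H^0(\Sord, \Ekap)$ as an equivariant function $(\uA, j) \mapsto f(\uA, j) \in \rho_\kappa$ on pairs of an ordinary abelian variety with a trivialization $j$ of $\uo_{\uA}$, satisfying $f(\uA, jg) = \rho_\kappa(g)^{-1} f(\uA, j)$ for $g \in H$. The map $\Psi_\kappa$ records $\ellcan^\kappa(f(\uA, j_\iota))$ only for trivializations $j_\iota$ coming from Igusa structures via $\uo_{\uA} \cong \cA^{\ord}[p^n]^{\et} \otimes \cO$. The key observation is that the right action of $H(\bZ_p)$ on $\Ig$ sends $\iota$ to $\iota g$ and therefore $j_\iota$ to $j_\iota \cdot g$, so varying $g$ allows one to extract the function $g \mapsto \ellcan^\kappa(\rho_\kappa(g)^{-1} f(\uA, j_\iota))$ on $H(\bZ_p)$ from $\Psi_\kappa(f)$. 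Because $\rho_\kappa$ is irreducible over $\bQ_p$ and $\ellcan^\kappa \neq 0$ (as a basis of $\Hom_{B^-}(\rho_\kappa, \kappa)$), the $H(\bQ_p)$-translates of $\ellcan^\kappa$ span $\rho_\kappa^\vee \otimes \bQ_p$. Thus the vanishing of $\Psi_\kappa(f)$ on $\Ig$ forces $f(\uA, j_\iota) = 0$ at every Igusa point (using $\bW$-torsion-freeness of sections of $\Ekap$); faithful flatness of $\Ig$ over the formal completion of $\Sord$ then yields $f = 0$.

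For density, fix $F \in V^N$; it suffices to approximate $\bar F := F \bmod p^m$ for every $m \geq 1$. Since $V_{\infty,m} = \varinjlim_n V_{n,m}$, we may assume $\bar F \in V_{n,m}^N$ for some $n$. The torus $T(\bZ_p)$ acts on $V_{n,m}^N$ through the finite quotient $T(\bZ_p/p^n)$, yielding a finite decomposition $\bar F = \sum_\chi \bar F_\chi$ indexed by characters $\chi$ of $T(\bZ_p/p^n)$. Each $\chi$ lifts to a (not necessarily positive) dominant weight $\kappa_\chi$. The main geometric input is a Borel-Weil type identification
\[ \Psi_{\kappa_\chi} \bmod p^m : H^0(\Sordm, \cE_{\kappa_\chi}) \overset{\sim}{\longrightarrow} V_{n,m}^N[\kappa_\chi], \]
obtained from the associated-bundle construction: $N(\bZ_p)$-invariant $T(\bZ_p)$-weight-$\kappa_\chi$ functions on the $H$-torsor $\cE \to \Sord$ (descended to $\Ig_{n,m}$) correspond to global sections of $\cE_{\kappa_\chi}$. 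This shows $\bar F_\chi$ is the image under $\Psi_{\kappa_\chi}$ of a classical form of weight $\kappa_\chi$.

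To force \emph{positive} dominant weights, multiply $\bar F_\chi$ by a fixed classical form $h_{\kappa_0}$ of very positive weight $\kappa_0$ that is a unit on $\Sord$ — concretely, a sufficiently high power of the determinant sections of $\uo^\pm_\tau$, which are invertible on the ordinary locus (or, equivalently, of a lift of a power of the Hasse invariant). Then $h_{\kappa_0} \cdot \bar F_\chi$ corresponds to a classical form of positive dominant weight $\kappa_\chi + \kappa_0$, and after inverting $p$ (and hence $h_{\kappa_0}$ as an element of $V^N[1/p]$) dividing by $h_{\kappa_0}$ recovers $\bar F_\chi$. Summing over $\chi$ exhibits $\bar F$ as congruent mod $p^m$ to an element of $\Psi\bigl(\bigoplus_{\kappa \text{ positive}} H^0(\Sord, \Ekap)\bigr)[1/p] \cap V^N$, proving density. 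The main obstacle is the Borel-Weil identification at finite Igusa level: one must verify that the normalization \eqref{eqn-lcan-formula} of $\ellcan^\kappa$ makes $\Psi_{\kappa_\chi}$ compatible with the weight decomposition integrally, and match the $H(\bZ_p/p^n)$-cover structure of $\Ig_{n,m} \to \Sordm$ with the $H$-torsor structure of $\cE \to \Sordm$ so that descending $N$-invariant weight-$\kappa_\chi$ functions produces honest sections of $\cE_{\kappa_\chi}$.
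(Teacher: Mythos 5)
There is a genuine gap, and it sits exactly where the paper's own proof does all of its work. Your density argument produces, for each character $\chi$, a classical form of weight $\kappa_\chi+\kappa_0$ in $H^0(\Sordm,\cE_{\kappa_\chi+\kappa_0})$, i.e.\ a section over the mod-$p^m$ ordinary locus. But the set whose density you must prove is the image of $\Psi$ applied to $H^0(\Sord,\Ekap)$, sections over $\bW$. To pass from your mod-$p^m$ section to an element of that image you need the surjectivity of reduction
\begin{equation*}
H^0(\Sord,\cE_\kappa)/p^m H^0(\Sord,\cE_\kappa)\;=\;H^0(\Sordm,\cE_\kappa),
\end{equation*}
and this is not automatic: $\Sord$ is an open subscheme of a quasi-projective scheme and is not affine, so higher cohomology can obstruct lifting. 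Your proposal never addresses this step. The paper's proof, by contrast, simply invokes Hida's Proposition~8.2 and Theorem~8.3 for both injectivity and density (your reconstruction of those arguments is roughly the right shape, but it is not what the paper proves) and devotes itself entirely to verifying the displayed equality, which is the hypothesis Hida's results are conditional on. That verification is nontrivial: it uses Lan's partial toroidal and minimal compactifications of $\Sord$, the affineness of the minimal compactification modulo $p^m$, quasi-coherence of the pushforward of the canonical extension $\Ekap^{\mathrm{can}}$, and a Koecher principle for the partial compactifications to identify $H^0(\Sord^{\mathrm{tor}},\Ekap^{\mathrm{can}})$ with $H^0(\Sord,\cE_\kappa)$.

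To repair your argument you would either have to supply this base-change statement yourself (essentially reproducing the paper's proof) or restructure the density claim so that it only ever refers to mod-$p^m$ sections, which is not what the theorem asserts. A secondary, smaller point: your ``Borel--Weil type identification'' of $V_{n,m}^N[\kappa_\chi]$ with $H^0(\Sordm,\cE_{\kappa_\chi})$ is itself the content of Hida's geometric argument and would need either a citation or a proof that the normalization of $\ellcan^\kappa$ chosen in Definition~\ref{lcan-def} is compatible with it integrally; as written it is asserted rather than established.
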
 }{
\begin{proof} 
Proposition 8.2 and Theorem 8.3 in \cite{hida} are conditional on the assumption (given in \cite[Section 8.1.4]{hida}) that the following equality holds for all $\kappa \in X(T)_+$ and all integers $m \geq 1:$
\begin{equation*}H^0(\Sord ,\cE_\kappa)/p^m H^0(\Sord ,\cE_\kappa)= H^0(\Sordm ,\cE_{\kappa}).\end{equation*} Here, we prove that such equations hold in our settings. 

Although they have not been introduced in this paper, Lan has constructed partial toroidal and minimal compactifications $\Sord^{\mathrm{tor}}$ and $\Sord^{\mathrm{min}}$ of $\Sord$ (see \cite[Theorems 5.2.1.1 \& 6.2.1.1]{lan4}) as well as a canonical extension $\Ekap^{\mathrm{can}}$ of $\Ekap$ to the partial toroidal compactifications (see \cite[Definition 8.3.3.1]{lan4}). Additionally, by \cite[Proposition 6.3.2.4]{lan4}, for every $m \geq 1$, we have that $\Sord^{\rm{min}} \times_{\bW} \bW/p^m\bW$ is affine. Because the pushforwards (under a proper map by \cite[Proposition 5.2.3.18]{lan4}) of $\Ekap^{\mathrm{can}}$ to $\Sord^{\rm{min}}$ are quasi-coherent, we can conclude that
	$$H^0\left(\Sord^{\mathrm{tor}},\Ekap^{\mathrm{can}}\right)/p^m H^0\left(\Sord^{\mathrm{tor}},\Ekap^{\mathrm{can}}\right) = H^0\left(\Sord^{\mathrm{tor}} \times_{\bW} \bW/p^{m}\bW,\Ekap^{\mathrm{can}}\right).$$
We could then conclude the theorem if we knew that Koecher's Principle applied. By \cite[Remark 10.2]{lan5}, the analogue of \cite[Theorem 2.3]{lan5} holds for the partial compactifications of ordinary loci and so we deduce 
	$$H^0(\Sord^{\mathrm{tor}},\Ekap^{\mathrm{can}}) = H^0(\Sord ,\cE_\kappa).$$
\end{proof}}

The above statement implies that the $p$-adic closure of the space of integral weight $p$-adic automorphic forms is the space of all $p$-adic automorphic forms.

\subsection{Serre--Tate theory for unitary Shimura varieties}
We briefly recall the main results in \cite{CEFMV}. 

\subsubsection{Local coordinates at ordinary points}\label{ST-coord}

For any $\xzero\in \Sord(\Witt)$, we write $\xzeroreduced\in \Sord(\fpb)$ for its reduction modulo $p$, and   
denote  by $\Sord^\wedge_{\xzeroreduced}$ the formal completion of $\Sord \times \overline{\bF}_p$ at $\xzeroreduced$. We also write $\Sord_{\xzeroreduced}^\wedge={\rm Spf}(\Ring)$. The ring $\Ring$ is a complete local ring over $\Witt$, with residue field $\fpb$, and we denote by $\fm_{\xzeroreduced}$ its maximal ideal.

 Let ${A_0}=\cA^{\ord}_{\xzeroreduced}$ denote the abelian variety {over $\ov \bF_p$} 
attached to the point $\bar{x}_0$, and write $T_p{A_0}$ for the physical Tate module of ${A_0}$. It is a free $\cO_K\otimes_{\zz}\zz_p$-module, which  decomposes as
\[T_p{A_0}=\oplus_{i=1}^r T_{\fP_i} {A_0}\bigoplus \oplus_{i=1}^r T_{\fP_i^c}{A_0},\]
 where the decomposition is induced from the identification $\cO_K \otimes \bZ_p \cong \bigoplus_{i=1}^r \left( \cO_{K_{\fP_i}} \oplus \cO_{K_{\fP_i^c}} \right)$.

\begin{thm}(\cite[Proposition 5.8]{CEFMV})\label{ST-isom}\label{ST-coord-thm}
Let $\xzero\in\Sord(\Witt)$. There exists a canonical isomorphism of formal schemes 
\[{\Sord}^\wedge_{\bar{x}_0}\isomto \bigoplus_{i=1}^r{\rm Hom}_{\zz_p} (T_{\fP_i}{A_0}\otimes T_{\fP_i^c}{A_0},\hat{\mathbb G}_m), \quad x\mapsto q_x
.\]
\end{thm}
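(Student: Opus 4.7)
The plan is to deduce this from the classical Serre--Tate theorem for ordinary abelian varieties, and then impose the extra PEL conditions (endomorphisms, polarization, level structure) using that $p$ splits completely in $K$.

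\textbf{Step 1: Apply classical Serre--Tate.} First, forgetting the PEL structure, I would invoke the classical Serre--Tate theorem (as in Katz's \emph{Serre--Tate local moduli} or \cite{KaST}) which gives a canonical isomorphism of formal schemes between the deformation space of $A_0$ as a bare abelian variety and the formal torus
\[
\mathrm{Def}(A_0) \isomto \Hom_{\bZ_p}\!\left(T_pA_0 \otimes_{\bZ_p} T_pA_0^\vee,\ \hat{\mathbb G}_m\right),
\]
where $T_pA_0^\vee$ is identified with $\Hom_{\bZ_p}(T_pA_0^{\et},\bZ_p(1))$ via the Weil pairing on the dual. A lift $A/R$ of $A_0$ over an Artinian $\Witt$-algebra $R$ maps to the Serre--Tate $q$-symbol $q(A/R;-,-)$.

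\textbf{Step 2: Impose the $\cO_K$-action.} Next I would cut out those deformations respecting $i:\cO_K\hookrightarrow\End(A_0)\otimes\bZ_{(p)}$. Lifts of $i$ exist iff $q$ is $\cO_K$-balanced in the sense $q(i(k)x, y) = q(x, i(k^c)y)$ for all $k\in\cO_K$ (using that $T_pA_0^\vee$ carries the adjoint action via $c$). Because $p$ splits completely, $\cO_K\otimes\bZ_p \cong \bigoplus_{i=1}^r (\cO_{K_{\fP_i}}\oplus\cO_{K_{\fP_i^c}})$, and this decomposition induces the direct sum decomposition $T_pA_0 = \bigoplus_i (T_{\fP_i}A_0 \oplus T_{\fP_i^c}A_0)$. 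Balancedness forces $q$ to vanish on cross-terms of matching type (because $\cO_{K_{\fP_i}}$ acts on both sides with the same character), leaving only the contributions from pairs $(T_{\fP_i}A_0, T_{\fP_j^c}A_0^\vee)$, and then further the balancing forces the pairs to correspond under $i\leftrightarrow j$.

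\textbf{Step 3: Impose the polarization.} Now I would use that $A_0$ is ordinary with a prime-to-$p$ polarization $\lambda_0$; this induces an $\cO_K$-equivariant isomorphism $T_pA_0 \isomto T_pA_0^\vee$ intertwining the actions via $c$. Under the decomposition of Step~2, $\lambda_0$ identifies $T_{\fP_i^c}A_0$ with the Cartier dual of $T_{\fP_i}A_0$ (and vice versa). Compatibility of the $q$-symbol with $\lambda$ (liftability of $\lambda_0$) is the symmetry condition $q(x,\lambda_0 y) = q(y,\lambda_0 x)$. After using the identification from $\lambda_0$ to rewrite everything in terms of $T_{\fP_i}A_0$ and $T_{\fP_i^c}A_0$ themselves (rather than one and its dual), this symmetry condition becomes automatic on the summand $\Hom_{\bZ_p}(T_{\fP_i}A_0\otimes T_{\fP_i^c}A_0,\hat{\mathbb G}_m)$, because swapping $x\leftrightarrow y$ across the two conjugate types is what the polarization encodes.

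\textbf{Step 4: Level structure and canonicity.} The prime-to-$p$ level structure $\alpha$ deforms uniquely since $\cpct^p$ is neat and away from $p$, so it imposes no further condition on the formal neighborhood. Putting Steps 1--3 together produces the stated isomorphism, and canonicity follows because each constituent (Serre--Tate $q$-symbol, splitting from $\cO_K\otimes\bZ_p$, duality induced by $\lambda_0$) is itself canonical. The formal target is a product of formal tori of total dimension $\sum_i a_{+\tau_i}\cdot a_{-\tau_i}$ (where $\tau_i\in\Sigma$ is the place induced by $\fP_i$), which matches $\dim \Sord = \sum_\tau a_{+\tau} a_{-\tau}$ by the determinant condition, confirming the map is an isomorphism of smooth formal schemes of the same dimension.

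\textbf{Main obstacle.} The subtle point is bookkeeping in Steps~2--3: one must verify that the $\cO_K$-balancing condition and the $\lambda$-symmetry condition combine to cut the big formal torus down to exactly the summand $\bigoplus_i \Hom_{\bZ_p}(T_{\fP_i}A_0 \otimes T_{\fP_i^c}A_0,\hat{\mathbb G}_m)$ with the correct identifications (as opposed to e.g.\ a formal torus of half the dimension or with the $\fP_i$'s paired incorrectly). This boils down to carefully tracking the CM type $\Sigma$, the involution $c$, and how $\lambda_0$ swaps $T_{\fP_i}$ with $T_{\fP_i^c}^\vee$; once the bookkeeping is set up with the conventions of Section~\ref{notation-section}, the match is forced by the determinant condition.
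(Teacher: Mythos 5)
This statement is imported verbatim from \cite[Proposition 5.8]{CEFMV} and the present paper supplies no proof of its own; your argument --- classical Serre--Tate local moduli for the ordinary abelian variety, followed by the lifting criteria for the $\cO_K$-action, the polarization, and the (prime-to-$p$, hence rigidly deforming) level structure, with the split behaviour of $p$ reducing the symmetric Hermitian forms to the summand $\bigoplus_i \Hom_{\bZ_p}(T_{\fP_i}A_0\otimes T_{\fP_i^c}A_0,\hat{\mathbb G}_m)$ --- is exactly the route taken there, following Katz \cite{KaST}. The only blemish is the parenthetical identification in your Step 1 of $T_pA_0^\vee$ with $\Hom_{\bZ_p}(T_pA_0,\bZ_p(1))$, which is the Tate module of the multiplicative part of $A_0^\vee$ rather than its physical (\'etale) Tate module --- the latter being what the $q$-symbol actually pairs against --- but this identification is never used in your argument and does not affect its correctness.
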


In the following, we identify the space $\bigoplus_{i=1}^r{\rm Hom}_{\zz_p} (T_{\fP_i}{A_0}\otimes T_{\fP_i^c}{A_0},\hat{\mathbb G}_m)$ with the subspace of ${\rm Hom}_{\zz_p} (T_p{A_0}\otimes T_p{A_0},\hat{\mathbb G}_m) $ consisting of all symmetric $(\cO_K\otimes_{\zz}\zz_p, c)$-hermitian forms, 
and write 
\begin{align}\label{fq-equ}
\fq=q_{\cA/\Sord^\wedge_{\bar{x}_0}}:T_p{A_0}\otimes T_p{A_0}\to \hat{\mathbb G}_m 
\end{align} 
for the universal symmetric $(\cO_K\otimes_{\zz}\zz_p, c)$-hermitian form over $\Sord^\wedge_{\bar{x}_0}$. 
This implies that $\fq$ satisfies $\fq(Q,P)=\fq(P,Q)$ and $\fq(kQ,P) = \fq(Q,k^cP)$,  for all $P,Q\in T_pA_0$ and $k \in \cO_K$. In particular, for any $P\in T_{\fP_i}A_0$, $\fq(Q,P)=0$  unless $Q\in T_{\fP_i^c}A_0$.

For any point $x\in\Igusa(\Witt)$ above $\xzero$, we write $\bar{x}$ for  its reduction modulo $p$, and $\iota_{\bar{x}}$ for the Igusa structure {of infinite level} on $\underline{{A_0}}$ attached to the point $\bar{x}$. 
The map $\iota_{\bar{x}}:\cL \otimes \mu_{p^\infty}\hookrightarrow {A_0}[p^\infty]$ induces an  isomorphism of $\cO_K\otimes_{\zz}\zz_p$-modules 
\begin{equation}\label{TateLattice}
T_p(\iota_{\bar{x}}^\vee):T_p{A_0}\to \cL^\vee .
\end{equation} 
We denote by \[t_x:\bigoplus_{i=1}^r T_{\fP_i}{A_0}\otimes T_{\fP_i^c}{A_0}\isomto (\cL^2)^\vee,\]
the $\zz_p$-linear isomorphism induced by the restriction of $T_p(\iota_{\bar{x}}{\dual})^{\otimes 2}$.

\begin{prop}(\cite[Proposition {5.10}]{CEFMV})\label{ST-coordinates}\label{beta}
Let $\xzero\in\Sord(\Witt)$.  Each point $x\in\Igusa(\Witt)$ above $\xzero$ defines an isomorphism of formal schemes $\beta_x: {\Sord}^\wedge_{\bar{x}_0}\isomto \gm\otimes \cL^2$. 
\end{prop}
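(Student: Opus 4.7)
The strategy is to combine the canonical Serre--Tate isomorphism of Theorem~\ref{ST-coord-thm} with two further identifications: the trivialization of the Tate module afforded by the Igusa structure $\iota_{\bar{x}}$ (which gives the map $t_x$ already introduced), and the standard Cartier-type duality $\Hom_{\zz_p}(M,\gm)\cong \gm\otimes M^\vee$ available for any finite free $\zz_p$-module $M$. Composing these three isomorphisms will produce $\beta_x$.

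First, I would unpack how the Igusa structure interacts with the $\cO_K\otimes \zz_p$-decomposition. The isogeny $\iota_{\bar{x}}:\cL\otimes\mu_{p^\infty}\hookrightarrow A_0[p^\infty]$ is $\cO_K$-linear, so the induced isomorphism $T_p(\iota_{\bar x}^\vee): T_pA_0\isomto \cL^\vee$ of \eqref{TateLattice} respects the decompositions
\[
T_pA_0=\bigoplus_{i=1}^r\bigl(T_{\fP_i}A_0\oplus T_{\fP_i^c}A_0\bigr),\qquad \cL^\vee=\bigoplus_{\tau\in\Sigma}\bigl((\cL_\tau^+)^\vee\oplus(\cL_\tau^-)^\vee\bigr),
\]
where a prime $\fP_i$ of $\cO_K$ above $p$ corresponds to a unique $\tau\in\Sigma$ under our fixed CM type, matching $T_{\fP_i}A_0$ with $(\cL^+_\tau)^\vee$ and $T_{\fP_i^c}A_0$ with $(\cL^-_\tau)^\vee$. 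Restricting $T_p(\iota_{\bar{x}}^\vee)^{\otimes 2}$ to the relevant subspaces yields precisely the announced $\zz_p$-linear isomorphism
\[
t_x:\bigoplus_{i=1}^r T_{\fP_i}A_0\otimes T_{\fP_i^c}A_0\isomto \bigoplus_{\tau\in\Sigma}(\cL^+_\tau)^\vee\otimes(\cL^-_\tau)^\vee=(\cL^2)^\vee.
\]

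Next I would invoke the duality between finite free $\zz_p$-modules and formal tori: for any such $M$, the functor $S\mapsto \Hom_{\zz_p}(M,\gm(S))$ is represented by $\gm\otimes M^\vee$, functorially in $M$. Applying this with $M=(\cL^2)^\vee$ and transporting via $t_x$ produces
\[
\bigoplus_{i=1}^r\Hom_{\zz_p}\!\bigl(T_{\fP_i}A_0\otimes T_{\fP_i^c}A_0,\gm\bigr)\;\isomto\;\Hom_{\zz_p}\!\bigl((\cL^2)^\vee,\gm\bigr)\;\isomto\;\gm\otimes\cL^2.
\]
Composing with the Serre--Tate isomorphism of Theorem~\ref{ST-coord-thm} gives the desired $\beta_x:\Sord^\wedge_{\bar{x}_0}\isomto \gm\otimes\cL^2$, manifestly depending only on $x$ via the Igusa structure $\iota_{\bar{x}}$.

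The conceptual content is entirely contained in Theorem~\ref{ST-coord-thm} and Proposition~\ref{ST-isom}; the only thing to verify carefully is bookkeeping, namely that the splitting of $T_pA_0$ used in the statement of Theorem~\ref{ST-coord-thm} is identified, via the $\cO_K$-linear map $T_p(\iota_{\bar{x}}^\vee)$, with the $\Sigma$-indexed splitting of $\cL^\vee$, so that the $\fP_i$-summand really does correspond to $\cL^+_{\tau}\otimes\cL^-_{\tau}$ in $\cL^2$ rather than some twist or permutation. I expect this matching, dictated by the fixed CM type identifying $\Sigma$ with the set of primes of $\cO_K$ above the primes of $\cO_{K^+}$ above $p$, to be the only subtle point; once it is checked, the isomorphism $\beta_x$ is the composition of three canonical maps.
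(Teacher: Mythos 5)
Your proposal is correct and is essentially the argument the paper intends: the paper itself only cites \cite[Proposition~5.10]{CEFMV}, but the material it sets up immediately beforehand (the canonical isomorphism of Theorem~\ref{ST-coord-thm} and the map $t_x$ induced by $T_p(\iota_{\bar{x}}^\vee)^{\otimes 2}$) is exactly so that $\beta_x$ can be obtained as the composite you describe, with the $\gm\otimes(-)$ duality for finite free $\zz_p$-modules supplying the last identification. Your flagged bookkeeping point about matching the $\fP_i$-indexed and $\Sigma$-indexed decompositions is indeed the only thing to check, and it is handled by the paper's convention that the Hermitian pairing pairs the $\tau$- and $\tau^c$-components.
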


\begin{rmk}\label{beta-rmk}
Let $t$ denote the canonical formal parameter on  $\gm$, we write 
\[\beta_x^*:  \Witt[[t]]\otimes (\cL^2)^\vee \isomto \Ring\]
for the isomorphism of local rings induced by $\beta_x${, where $\Witt[[t]]\otimes(\cL^2)^\vee$ denotes the complete ring corresponding to the formal scheme ${\hat{\bG}}_m\otimes \cL^2$.  A choice of a $\zz_p$-basis $\fE$ of $(\cL^2)^\vee$ yields} the isomorphism
\[\beta^*_{x,\fE}:\Witt[[t_l|l\in {\fE} ]]\isomto \Ring , \]
{which} satisfies the {equality} $\beta_x^*(t_l)=\fq(t_x^{-1}(l)) -1\in \fm_{\bar{x}_0}$, for all $l\in\fE$.
\end{rmk}

\subsubsection{The $t$-expansion principle} \label{section-t-expansion}

Let $x\in\Igusa(\Witt)$. Recall that since $\Igusa$ is a pro-finite \'etale cover of $\Sord$, the natural projection $j:\Igusa\to \Sord$ induces an isomorphism between the formal completion of $\Igusa$ at $x$ and $\Sord^\wedge_{\bar{x}_0}$, for $\xzero=j(x)\in \Sord(\Witt)$. In particular,   the localization map at $x$  induces a map ${\rm loc}_x:V_{\infty,\infty}\rightarrow \Ring$. 

 For any $f\in V^N$, {\em the $t$-expansion} (or {\em Serre--Tate expansion}) of $f$ is defined as \[f_x(t):={\beta^*_x}^{-1}({\rm loc}_x(f))\in W[[t]]\otimes (\cL^2)^\vee.\]
Recall that, for all $g\in H(\bZ_p)$, we have $f_{x^g}(t)=({\rm id}\otimes g^{-1})(g\cdot f)_x(t)$ (\cite[Proposition 5.13]{CEFMV}).

\begin{thm}(\cite[Theorem 5.14, Proposition 5.5, Corollary 5.16]
{CEFMV})\label{t-exp}
\begin{enumerate}
\item For any weight $\kappa$, and $f\in V^N[\kappa]$: $f_x(t)=0$ if and only if $f=0$.
\item For any $f\in V^N$, $f=0$ if and only if $f_x(t) = 0$ for at least one CM point $x$ in each connected component of the Igusa tower.  In particular, for a choice of a CM point $x$, $f=0$ if and only if $(g\cdot f)_x(t)=0$  (equiv. $f_{x^g}(t)=0$) for all $g\in T(\zz_p)$. 
\item
 Let $m\in\bN$. {Let} $f,f'\in V^N$ be two $p$-adic automorphic forms of weight $\kappa$ and $\kappa'$, respectively. {Then}
$f\equiv f' \mod p^m$ if and only if  for all $g\in T(\zz_p)$ \[\kappa(g) f_x(t)\equiv \kappa'(g)f'_x(t)\mod p^m.\]
\end{enumerate}
 \end{thm}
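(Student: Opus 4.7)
The plan is to treat all three parts as avatars of the slogan ``a $p$-adic automorphic form is pinned down by its $t$-expansion at a single ordinary point,'' and to reduce everything to two geometric inputs: (a) a section of a locally free sheaf on a smooth connected component of $\Sord$ that vanishes in a formal neighborhood of $\xzeroreduced$ vanishes identically; and (b) the $T(\bZ_p)$-action on $V^N$ modulo $p^m$ factors through a finite quotient, so each level admits an idempotent decomposition into $\chi$-isotypic components.

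For Part (1), I would first observe that the pro-\'etale projection $j\colon\Igusa\to\Sord$ identifies the formal completion of $\Igusa$ at $x$ with $\Sord^\wedge_{\xzeroreduced}$, so $f_x(t)=0$ is the statement that $\loc_x(f)\in\Ring$ vanishes. For classical positive $\kappa$, a form in $V^N[\kappa]$ corresponds (via $\Psi_\kappa$) to a section of $\cE_\kappa$ on the component containing $\xzeroreduced$; the faithful flatness $\cO_{\Sord,\xzeroreduced}\hookrightarrow\hat\cO_{\Sord,\xzeroreduced}$ together with connectedness of the component force the section to vanish globally, and the injectivity of $\Psi_\kappa$ from Theorem \ref{density} then yields $f=0$. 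For general (possibly $p$-adic) $\kappa$, I would extend by approximation using the density part of Theorem \ref{density} and the continuity of $\loc_x$ for the $p$-adic topology.

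For Part (2), I would reduce modulo $p^m$ and use that on $V_{n,m}^N$ the $T(\bZ_p)$-action factors through a finite quotient $T(\bZ_p/p^{k(n,m)})$; standard orthogonal idempotents then give a decomposition $V_{n,m}^N=\bigoplus_\chi V_{n,m}^N[\chi]$. The hypothesis $(g\cdot f)_x(t)=0$ for all $g\in T(\bZ_p)$ forces each $\chi$-component of $f\bmod p^m$ to localize to $0$ at $x$, and Part (1) applied modulo $p^m$ kills each one; letting $m\to\infty$ gives $f=0$. Part (3) then follows formally: the identity $(g\cdot h)_x(t)=\chi(g)h_x(t)$ for $h\in V^N[\chi]$ turns the stated congruence into $(g\cdot(f-f'))_x(t)\equiv 0\bmod p^m$ for every $g\in T(\bZ_p)$, and Part (2) mod $p^m$ applied to $f-f'$ yields $f\equiv f'\bmod p^m$; the converse is immediate from the same identity.

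The main obstacle I anticipate is handling genuinely $p$-adic weights in Part (1), where there is no global algebraic section to which analytic continuation directly applies; spelling out the approximation by classical-weight forms and checking that the $t$-expansion behaves continuously in the $p$-adic topology is the most technical step, but it should follow cleanly from the level-by-level compatibility of $\loc_x$ with reduction modulo $p^m$.
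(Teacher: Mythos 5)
Your reduction of (3) to (2) is exactly right, and faithful flatness of $\cO_{\Sord,\xzeroreduced}\to\Ring$ together with irreducibility is the correct local-to-global tool on each irreducible component. But there are two genuine gaps in the way you get (1) and (2).

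First, the step ``standard orthogonal idempotents give $V^N_{n,m}=\bigoplus_\chi V^N_{n,m}[\chi]$'' fails. The finite quotient of $T(\bZ_p)$ acting on level $n$ is $T(\bZ/p^n)$, whose order is divisible by a large power of $p$, so the idempotents $e_\chi=|T|^{-1}\sum_g\chi(g)^{-1}g$ do not exist over $\bW/p^m$ and the group algebra is not semisimple there. Hence (2) cannot be deduced from (1) by isotypic decomposition. The same problem undercuts your treatment of (1) at genuinely $p$-adic weights: Theorem \ref{density} approximates $f$ only by \emph{sums} $\sum_{\kappa'}\Psi_{\kappa'}(h_{\kappa'})$ over several classical weights (with $p$ possibly inverted), and without an integral eigenspace decomposition, or linear independence of characters mod $p^m$ (which also fails), you cannot isolate the individual weight components to apply the classical-weight case to each; the approximation step is therefore circular rather than merely technical.

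Second, the essential non-formal input is missing. The function $f$ lives on the Igusa tower $\Ig$, not on $\Sord$, and $\Ig$ over the connected component $\Sord$ is in general \emph{not} connected; connectedness of $\Sord$ plus completion only controls the single connected component of $\Ig$ through $x$ (this is the content of \cite[Proposition 5.5]{CEFMV}). The whole point of requiring all $g\in T(\bZ_p)$ in (2), and the eigenform hypothesis in (1), is that by Hida's irreducibility theorem for the Igusa tower the $T(\bZ_p)$-orbit $\{x^g\}$ meets every connected component of $\Ig$ lying over $\Sord$ (given $N(\bZ_p)$-invariance). The intended logic is the reverse of yours: one proves (2) from this transitivity statement together with the local injectivity at each $x^g$, and then (1) is the special case where $f_{x^g}(t)=\kappa(g)(\id\otimes g^{-1})f_x(t)$ converts vanishing at $x$ into vanishing at every $x^g$. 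Without invoking the irreducibility of the Igusa tower (or the surjectivity of $T(\bZ_p)$ onto $\pi_0(\Ig/\Sord)$), neither (1) for general elements of $V^N[\kappa]$ nor (2) can be established.
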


\section{Differential operators}\label{diffop-sec}
In this section, we introduce differential operators similar to the ones in \cite{E09, EDiffOps}.  Unlike \cite{E09, EDiffOps} (which only explicitly handles unitary groups whose signature is of the form $\left(\siga, \sigb\right)$ with $\siga=\sigb$ at each archimedean place), we place no restrictions on the signature of the unitary groups with which we work.

\subsection{The Gauss--Manin connection}\label{GM-section}
We briefly review key features of the Gauss--Manin connection, which was first introduced by Y. Manin in \cite{manin} and later extended and studied by N. Katz and T. Oda \cite{katzGM, KO}.  A detailed summary of the Gauss--Manin connection also appears in \cite[Section 3.1]{EDiffOps}.  Below, we mostly follow the approaches of \cite[Section 2]{KO} and \cite[Section 3.1]{EDiffOps}. 
Throughout this section, let $S$ be a smooth scheme over a scheme $T$, and let $\pi: X\rightarrow S$ be a smooth proper morphism of schemes. {Define $\hdr^q(X/S)$ to be the relative de Rham sheaf in the complex $\hdr^\bullet(X/S)$, i.e. the quasi-coherent sheaf of graded algebras on $S$ given by
\begin{align*}
\hdr^q(X/S):= \R^q\pi_*\left(\Omega_{X/S}^\bullet\right),
\end{align*}
here $\R^q\pi_*$ denotes the $q$-th hyper-derived functor of $\pi_*$, and $\Omega_{X/S}^\bullet$ denotes the complex {$\bigwedge^\bullet \Omega^1_{X/S}$} on $X$ whose differentials are induced by the canonical K{ä}hler differential $\cO_{X/S} \rightarrow \Omega_{X/S}^1$.} 
 The de Rham complex $\left(\Omega_{X/T}^\bullet, d\right)$ admits a canonical filtration
\begin{align*}
\Fil^i\left(\Omega^\bullet_{X/T}\right):=\Im\left(\pi^*\Omega^i_{S/T}\otimes_{\cO_X}\Omega_{X/T}^{\bullet-i}\rightarrow\Omega_{X/T}^\bullet\right),
\end{align*}  
 with associated graded objects 
\begin{align*}
\Gr^i(\Omega^{\bullet}_{X/T})\cong \pi^*\Omega^i_{S/T}\otimes_{\cO_X} \Omega^{\bullet-i}_{X/S}
\end{align*} 
(this follows from the exactness of the sequence	$0 \rightarrow \pi^{\ast}\Omega^1_{S/T} \rightarrow \Omega^1_{X/T} \rightarrow \Omega^1_{X/S}\rightarrow 0$ for $\pi$ smooth).
 Using the above filtration, one obtains a spectral sequence $(E^{p,q}_r)$ converging to $\bR^q \pi_{\ast}(\Omega^\bullet_{X/T})$, whose first page is
\begin{align}\label{e1pq}
E_1^{p, q} = \R^{p+q}\pi_*\left(\Gr^p\right)\cong \Omega^p_{S/T}\otimes_{\cO_S}\hdr^q\left(X/S\right)
\end{align}
and such that the {\em Gauss--Manin connection} $\nabla$ is the map
$d_1^{0, q}: E_1^{0, q}\rightarrow E_1^{1, q}.$
 Using Equation \eqref{e1pq}, we regard $\nabla$ as a map
\begin{align*}
\nabla: \hdr^q\left(X/S\right)\rightarrow \hdr^q\left(X/S\right)\otimes_{\cO_S}\Omega^1_{S/T}.
\end{align*}
 It is an integrable connection.  In this paper, we shall be interested solely in the case of $q=1$.

\subsection{The Kodaira--Spencer morphism}\label{KS-section}

We now briefly review the construction of the Kodaira--Spencer morphism, focusing on the details we need for this paper.  More detailed treatments than we shall need for the present paper are available in \cite[Section 3.2]{EDiffOps}, \cite[Sections 2.1.6-7 \& 2.3.5]{lan}, and \cite{CF, EDiffOps}.  Like in Section \ref{GM-section}, we let $S$ be a smooth scheme over a scheme $T$, and we let $\pi: A\rightarrow S$ be a smooth proper morphism of schemes, and we require $A$ to be an abelian scheme together with a polarization $\polarization:A\rightarrow A\dual$. We define\footnote{In \cite{lan,lan4}, Lan gives an equivalent definition for $\uo_{A/S}$ as $  e^{\ast}\Omega^1_{A/S}$, the pullback via the identity section of the sheaf of relative differentials on $A$.}  $\uo_{A/S} := \pi_{\ast}\Omega^{{1}}_{A/S}$. 
The Kodaira--Spencer morphism is a morphism of sheaves
$$
\KS: \uo_{A/S}\otimes\uo_{{A\dual/S}}\twoheadrightarrow \Omega^1_{S/T},
$$
defined as follows. {Consider the exact sequence 
\begin{align}\label{uoincl-maps}
0\rightarrow \uo_{X/S}\hookrightarrow\hdr^1(X/S)\twoheadrightarrow H^1(X,\cO_X)\rightarrow 0
\end{align}
obtained by taking the first hypercohomology of the exact squence $0 \rightarrow \Omega^{\bullet\geq1}_{X/S} \rightarrow \Omega^{\bullet}_{X/S} \rightarrow \cO_X \rightarrow 0$ where we view $\cO_X$ as a complex concentrated in degree 0. 
By identifying  $H^1(A,\cO_A) \cong \uo_{A^\vee/S}^{\vee}$,  we obtain:
\begin{align}\label{xdualsurj-map}
0\rightarrow\uo_{A/S}\hookrightarrow\hdr^1(A/S)\twoheadrightarrow \uo_{A\dual/S}\dual\rightarrow 0.
\end{align}
The {\em Kodaira--Spencer morphism} $\KS$ is defined to be the composition of morphisms: }
\begin{equation*} 
\xymatrixrowsep{.3in}
\xymatrixcolsep{.17in}
 \xymatrix{ 
 \hdr^1\left(A/S\right) \otimes\uo_{A\dual/S}\ar[r]^{[2]\qquad} &
\hdr^1\left(A/S\right)\otimes\Omega^1_{S/T}\otimes\uo_{A\dual/S} \ar@{->>}[r]^{\quad [3]}   &
{\uo_{A\dual/S}\dual\otimes \Omega^1_{S/T} \otimes\uo_{A\dual/S}} \ar@{->>}[d]^{[4]}  \\ 
\uo_{A/S}\otimes\uo_{A\dual/S}\ar@{^{(}->}[u]^{[1]} \ar@{-->>}[rr]^{KS} & &
\Omega^1_{S/T} }\end{equation*}	
where [1] is the canonical inclusion from \eqref{xdualsurj-map} tensored with the identity map on $\uo_{A\dual/S}$, [2] is $\nabla\otimes \id_{\uo_{A\dual/S}}$, [3] is the surjection in \eqref{xdualsurj-map} tensored with $\id_{\Omega^1_{S/T}} \otimes \id_{\uo_{A\dual/S}}$, and [4] is the pairing $\uo_{A\dual/S}\dual\otimes \uo_{A\dual/S}\rightarrow\cO_S$ tensored with   $\id_{\Omega^1_{S/T}}$.

By identifying $\uo_{A/S}$ with $\uo_{A\dual/S}$ via the polarization $\polarization:A\rightarrow A\dual$, we regard $\KS$ as a morphism
\begin{align}\label{KS}
\KS: \uo_{A/S}\otimes_{\cO_S}\uo_{A/S}\twoheadrightarrow\Omega^1_{S/T}.
\end{align}

We now assume  $S$ is a scheme equipped with an \'etale morphism $S \rightarrow \cS^{\ord}_T$ where $T$ is a scheme over $\bW$.{ We write $A$ for the corresponding abelian scheme over $S$; the action of $\cO_K$ on $A$ induces a decomposition
\begin{align}\label{decomptau}
\uo_{A/S} = \bigoplus_{\tau \in \arch} (\uo_{{A/S},\tau}^+ \oplus \uo_{{A/S},\tau}^-)
\end{align}
defined as in  \eqref{decomptau1}.  In the following, we write
\begin{align}\label{omega2}
\underline{\omega}_{A/S}^2:=\oplus_{\tau\in\arch}\left(\uo_{A/S, \tau}^+\otimes\uo_{A/S, \tau}^-\right).\end{align}}
\begin{prop}\label{etaleks-prop}\label{ks} ({\cite[Proposition 3.4.3.3]{lan4}}) For any \'etale morphism $S\rightarrow \cS^{\ord}_{T}$ over $T$, $\KS$ induces an isomorphism\begin{align}\label{KSiso}{\rm ks}:  \underline{\omega}_{A/S}^2\isomto \Omega^1_{S/T}.
\end{align}
\end{prop}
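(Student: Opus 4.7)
The plan is to combine the surjectivity of the Kodaira--Spencer map \eqref{KS} with a structural analysis of its behaviour on the $\cO_K$-eigenspace decomposition of $\uo_{A/S}\otimes\uo_{A/S}$, reducing the isomorphism claim to a rank count. Concretely, I would show that $\KS$ vanishes on every summand of $\uo_{A/S}\otimes\uo_{A/S}$ except those making up $\underline{\omega}_{A/S}^2$, and then conclude by comparing ranks.

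For the vanishing step, the two main inputs are: (a) the Gauss--Manin connection $\nabla$ is $\cO_K$-equivariant on $\hdr^1(A/S)$, since the $\cO_K$-action comes from endomorphisms of $A/S$; and (b) the polarization identification $\lambda^{\ast}\colon\uo_{A^\vee/S}\isomto\uo_{A/S}$ is $c$-twisted, satisfying $\lambda^{\ast}\circ i(k)^\vee=i(k^c)\circ\lambda^{\ast}$, so it interchanges $\uo_{A^\vee/S,\tau}^{\pm}$ with $\uo_{A/S,\tau}^{\mp}$. Tracking $\cO_K$-eigenvalues through the four arrows in the construction of Section~\ref{KS-section} — the inclusion $\uo_{A/S}\hookrightarrow\hdr^1(A/S)$, the connection $\nabla$, the projection $\hdr^1(A/S)\twoheadrightarrow\uo_{A^\vee/S}^\vee$, and the duality pairing composed with $\lambda^\ast$ — shows that for $\omega\in\uo_{A/S,\tau}^{\epsilon}$ and $\omega'\in\uo_{A/S,\tau'}^{\epsilon'}$, applying $i(k)$ to either factor scales $\KS(\omega\otimes\omega')$ by an explicit character of $\cO_K$ depending on $(\tau,\epsilon)$ and $(\tau',\epsilon')$, with the second factor's character twisted by $c$ because of $\lambda^\ast$. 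Since $\Omega^1_{S/T}$ carries the trivial $\cO_K$-action, those two characters must agree for every $k$; our assumption that $p$ splits completely in $K$ makes the relevant characters pairwise distinct and therefore forces $\tau=\tau'$ and $\epsilon=-\epsilon'$. Thus $\KS$ factors through $\underline{\omega}_{A/S}^2$.

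For the rank count, $\underline{\omega}_{A/S}^2=\bigoplus_{\tau\in\Sigma}\uo_{A/S,\tau}^+\otimes\uo_{A/S,\tau}^-$ is locally free of rank $\sum_{\tau\in\Sigma}a_{+\tau}\,a_{-\tau}$ from the signature decomposition, and this is precisely the relative dimension of $\cS^{\mathrm{ord}}_T$ over $T$ imposed by Kottwitz's determinant condition, hence the rank of $\Omega^1_{S/T}$ (since $S$ is \'etale over $\cS^{\mathrm{ord}}_T$). A surjection between locally free $\cO_S$-modules of the same finite rank is an isomorphism. I expect the main difficulty to lie in the eigenspace bookkeeping of the middle step — particularly the $c$-twist introduced by the polarization, which has to be propagated through the dual and the pairing — and if one wishes to sidestep it, an alternative verification is to work formally at an ordinary point: by Theorem~\ref{ST-coord-thm} and Proposition~\ref{beta}, $\Omega^1_{S/T}$ at such a point is canonically $(\cL^2)^\vee$, and the ordinary structure identifies $\underline{\omega}_{A/S}^2$ with the same space, so one directly reads off that ks is an isomorphism.
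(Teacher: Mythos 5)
The paper gives no argument for this statement: it is quoted directly from Lan (\cite[Proposition 3.4.3.3]{lan4}), so there is no in-paper proof to compare against. Your eigenspace analysis is correct in substance --- the $c$-twist coming from $\lambda\circ i(k^c)=i(k)^\vee\circ\lambda$ does force $\KS$ to kill every summand of $\uo_{A/S}\otimes\uo_{A/S}$ other than $\uo^+_{A/S,\tau}\otimes\uo^-_{A/S,\tau}$ and its mirror image (this is exactly what the paper later confirms in Serre--Tate coordinates via the vanishing $\fq(Q,P)=0$ unless $Q\in T_{\fP_i^c}A_0$), and for this step you do not actually need $p$ split in $K$: the embeddings in $\Sigma_K$ are already pairwise distinct; splitting only makes the idempotents integral.

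The genuine gap is the very first input of your plan: surjectivity. The $\twoheadrightarrow$ in the display of Section \ref{KS-section} is written for an arbitrary polarized abelian scheme over a smooth base, where it is false (for a constant family $A=A_0\times_T S$ the connection preserves $\uo_{A/S}$ and $\KS=0$), so it cannot be cited as an established fact. Surjectivity of $\KS$ for the universal family is precisely the nontrivial content of the proposition: it is the assertion that the Hodge filtration varies maximally, which Lan obtains from the (Grothendieck--Messing) deformation theory of the PEL moduli problem. That same computation is what yields the dimension formula $\sum_{\tau}a_{+\tau}a_{-\tau}$; Kottwitz's determinant condition fixes the signature but does not by itself compute the relative dimension of $\cM$. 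So the two facts you want to combine in the rank count are not independently available, and the main route begs the question at its crucial step. Your Serre--Tate fallback is closer to a real proof --- Propositions \ref{Prop-421} and \ref{Delta} exhibit $\mathrm{ks}$ as an explicit invertible matrix on the formal completion at each $\Witt$-point of the ordinary locus --- but note that this machinery is developed only in Section \ref{local-section}, after the proposition is already in use, and that an isomorphism on formal completions at special-fibre points does not automatically globalize: the locus where $\mathrm{ks}$ fails to be an isomorphism is closed but could a priori be a nonempty closed subset of the generic fibre of $\Sord$, which meets no such formal neighborhood. Citing Lan, as the paper does, is the clean resolution.
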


\subsection{Definitions of differential operators} \label{doperator}\label{diffop}
We now define differential operators.  The construction is the same as the one in \cite[Sections 7-9]{EDiffOps}, which follows the construction in \cite[Chapter II]{kaCM}.  Unlike in \cite{E09, EDiffOps}, we place no conditions on the signature of the unitary groups with which we work; but the construction is identical.  The place the generalization of the signature is apparent is in the explicit description of the operators in terms of coordinates in Section \ref{coord-section}.

Let $A$, $S$, and $T/\bW$ be as in Proposition \ref{etaleks-prop}.   We identify $\Omega^1_{S/T}$ with $\oplus_{\tau\in\arch}\left(\uo_{A/S, \tau}^+\otimes\uo_{A/S, \tau}^-\right)$ via the isomorphism \eqref{KSiso}, and  $\oplus_{\tau\in\arch}\left(\uo_{A/S, \tau}^+\otimes\uo_{A/S, \tau}^-\right)$ with its image in $\hdr^1\left(A/S\right)^{\otimes 2}$ via the inclusion \eqref{uoincl-maps}.
 Applying Leibniz's rule (i.e. the product rule) together with the Gauss--Manin connection $\nabla$, we obtain an operator
\begin{align*}
\nabla_{\otimes d}: \hdr^1\left(A/S\right)^{\otimes d} \rightarrow \hdr^1\left(A/S\right)^{\otimes (d+2)}
\end{align*}
for all positive integers $d$.  

 The $\cO_K \otimes \bW$-structure on $A$ induces a decomposition 

\begin{align*}
\hdr^1\left(A/S\right)
={\bigoplus}_{\tau\in\arch}\left(\hdr^{+,\tau}\left(A/S\right)\oplus \hdr^{-, \tau}\left(A/S\right)\right),
\end{align*}

{such that 
$\uo_{A/S, \tau}^\pm\subset\hdr^{\pm,\tau}\left(A/S\right)$  and  
$\nabla(\hdr^{\pm{, \tau}}\left(A/S\right))\subseteq \hdr^{\pm{, \tau}}\left(A/S\right)\otimes_{\cO_S}\Omega^1_{S/T},$
for all $\tau\in\arch$ (\cite[Equations (3.3)-(3.4)]{EDiffOps}).}
Thus, the image of $\nabla_{\otimes d}$ is contained in $\hdr^1\left(A/S\right)^{\otimes d}\otimes {\left(\bigoplus_{\tau \in \Sigma}\left(\hdr^{+,\tau}\left(A/S\right)\otimes\hdr^{-,\tau}\left(A/S\right)\right)\right)}$.

For all positive integers $d$ and $e$, we define $\nabla_{\otimes d}^e:=\nabla_{\otimes (d+2(e-1))}\circ\nabla_{\otimes \left(d+2\left(e-2\right)\right)}\circ\cdots\circ \nabla_{\otimes d}$,
\begin{align*}
\nabla_{\otimes d}^e: \hdr^1\left(A/S\right)^{\otimes d}\rightarrow \hdr^1\left(A/S\right)^{\otimes d}\otimes\left({\bigoplus_{\tau \in \Sigma}\left(\hdr^{+,\tau}\left(A/S\right)\otimes\hdr^{-,\tau}\left(A/S\right)\right)}\right)^{\otimes e}.
\end{align*}

\begin{prop}\label{schur}
For each positive integer $e$ and each positive dominant weight $\kappa$, the map $\nabla^e_{\otimes_d}$ {where $d=d_\kappa$} {induces a map}
\[\nabla^e_\kappa:  \bS_\kappa\left(\hdr^1\left(A/S\right)\right)\rightarrow \bS_\kappa\left(\hdr^1\left(A/S\right)\right)\otimes\left({\bigoplus_{\tau \in \Sigma}\left(\hdr^{+,\tau}\left(A/S\right)\otimes\hdr^{-,\tau}\left(A/S\right)\right)}\right)^{\otimes e}.\]
\end{prop}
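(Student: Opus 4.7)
The plan is to reduce the claim to a symmetric-group equivariance statement: once we show that $\nabla^e_{\otimes d_\kappa}$ commutes with the right action of $\prod_{\tau \in \Sigma}(\Symm_{d^{\tau+}_\kappa} \times \Symm_{d^{\tau-}_\kappa})$ permuting the first $d_\kappa$ tensor factors of source and target, we are done. Indeed, $\bS_\kappa(\hdr^1(A/S))$ is by definition the image inside $\hdr^1(A/S)^{\otimes d_\kappa}$ of right-multiplication by $c_\kappa$, and $c_\kappa = \otimes_{\tau}(c^{\tau+}_\kappa \otimes c^{\tau-}_\kappa)$ lies in the group algebra of the above symmetric-group product, so the equivariance forces $\nabla^e_{\otimes d_\kappa}$ to restrict to a map $\bS_\kappa(\hdr^1(A/S)) \to \bS_\kappa(\hdr^1(A/S)) \otimes \left(\bigoplus_{\tau\in\Sigma}\hdr^{+,\tau}(A/S)\otimes \hdr^{-,\tau}(A/S)\right)^{\otimes e}$.

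First I would record compatibility with the CM-type decomposition. The excerpt recalls that $\nabla$ preserves each summand $\hdr^{\pm,\tau}(A/S)$, so each Leibniz extension $\nabla_{\otimes(d_\kappa+2j)}$ preserves the sub-bundle
\[\boxtimes_{\tau\in\Sigma}\left((\hdr^{+,\tau}(A/S))^{\otimes d^{\tau+}_\kappa}\boxtimes(\hdr^{-,\tau}(A/S))^{\otimes d^{\tau-}_\kappa}\right) \subset \hdr^1(A/S)^{\otimes d_\kappa}\]
(after sorting the new $\Omega^1_{S/T}$-factor into the appropriate $\uo^+_\tau\otimes \uo^-_\tau$-summand via the Kodaira-Spencer identification of Proposition \ref{ks}). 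This matches the tensor-product factorization of $c_\kappa$, so it suffices to check equivariance one $\Symm_{d^{\tau\pm}_\kappa}$-factor at a time.

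Next, the equivariance for a single Leibniz step is immediate. On a pure tensor, writing $\nabla(\alpha_i) = \sum_j\beta_i^{(j)}\otimes\omega_i^{(j)}$, one has
\[\nabla_{\otimes d}(\alpha_1\otimes\cdots\otimes\alpha_d) = \sum_{i=1}^d\sum_j(\alpha_1\otimes\cdots\otimes\beta_i^{(j)}\otimes\cdots\otimes\alpha_d)\otimes\omega_i^{(j)},\]
and a direct comparison shows $\nabla_{\otimes d}(x\cdot\sigma) = \nabla_{\otimes d}(x)\cdot\sigma$ for any $\sigma$ acting on the first $d$ positions. I would then induct on $e$: each subsequent step $\nabla_{\otimes(d_\kappa+2j)}$ is itself a Leibniz extension, so it treats all of its input positions symmetrically and in particular commutes with the action of $\prod_\tau(\Symm_{d^{\tau+}_\kappa}\times\Symm_{d^{\tau-}_\kappa})$ restricted to act only on the first $d_\kappa$ slots, while the freshly produced factors get appended to the tail.

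The main obstacle I expect is notational rather than conceptual: the definition of $\nabla^e_{\otimes d_\kappa}$ fixes a particular ordering of the $2e$ extra tensor positions at the end, whereas a naive iterated Leibniz expansion produces the new factors one pair at a time. I would therefore pin down a placement convention (e.g., always push the newly created $\Omega^1_{S/T}$-factor into the rightmost available slot before embedding it into $\hdr^{+,\tau}\otimes\hdr^{-,\tau}$) and verify that this convention does not interfere with the symmetric-group action on the first $d_\kappa$ positions, so that the equivariance argument above genuinely yields an $\cO_S$-linear factorization through $\bS_\kappa(\hdr^1(A/S))$.
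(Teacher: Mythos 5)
Your proposal is correct and follows essentially the same route as the paper: the paper's proof consists precisely of the observation that $\nabla_{\otimes d}$ (and hence $\nabla^e_{\otimes d}$) is equivariant for the right action of $\Symm_d$ on the first $d$ tensor factors, so that $\nabla^e_{\otimes d}(f\cdot c_\kappa)=(\nabla^e_{\otimes d}f)\cdot c_\kappa$ and the map restricts to $\bS_\kappa$. Your additional bookkeeping (compatibility with the CM-type decomposition and the placement convention for the new tensor slots) only makes explicit what the paper leaves implicit.
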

\begin{proof}
Note that by definition the operator $\nabla_{\otimes d}$ is equivariant for the action of $\Symm_d$ (where we consider the natural action on the $d$-th tensor power
and the action on the $d+2$-th tensor power induced by the standard inclusion $\Symm_d\to\Symm_{d+2}$). 
I.e., { \[\nabla_{\otimes d} \left( \left( \cdot \right) \sigma \right) = \left( \nabla_{\otimes d} \left( \cdot \right) \right) \sigma  \text{ for all }\sigma\in\Symm_d,\] }
for all positive integers $d$. It follows from the definition that the same holds for the operators $\nabla_{\otimes d}^e$ for all positive integers $d$ and $e$. Thus, in particular {\[\nabla^e_{\otimes_d}(f \cdot c_\kappa)=(\nabla^e_{\otimes d}f) \cdot c_{\kappa},\]} for all $f\in  \hdr^1\left(A/S\right)^{\otimes d}$, and $c_\kappa$ the generalized Young symmetrizer of $\kappa$.
\end{proof}
For a locally free sheaf of modules $\mathcal{F}$, we sometimes write $\left(\mathcal{F}\right)^\rho$ (resp.   $\nabla_\rho^e$) in place $\bS_\kappa \left(\mathcal{F}\right)$ (resp.  $\nabla_\kappa^e$), for $\rho=\rho_\kappa$ the irreducible representation with highest weight $\kappa$.

Note that {$\nabla_{\rho}:=\nabla_\rho^1$} decomposes a sum  over $\tau\in\arch$ of maps
\begin{align*}
\nabla_{\rho}(\tau):  \left(\hdr^1\left(A/S\right)\right)^\rho\rightarrow \left(\hdr^1\left(A/S\right)\right)^\rho\otimes\left(\hdr^{+,\tau}\left(A/S\right){\otimes} \hdr^{-, \tau}\left(A/S\right)\right).
\end{align*}
 For each nonnegative integer $e$, we define $\nabla_\rho^e(\tau)$ to be the composition of $\nabla(\tau)$ with itself $e$ times (taking into account that the subscript changes with each iteration).

\subsubsection{$\ci$ differential operators}\label{cidefns-section}

The construction of the $\ci$ differential operators in this section is similar to the one in \cite[Section 2.3]{kaCM} and \cite[Section 8]{EDiffOps}.  As explained in \cite[Section 8.3]{EDiffOps}, these differential operators are the Maass--Shimura operators discussed in \cite[Section 12]{Shimura}.  (The explanation in \cite{EDiffOps} immediately extends to all signatures.)

 Let $\hdr^1\left(\ci\right):=\hdr^1\left({\Auniv/\cM}\right)\left(\ci\right)$, and  $\uo\left(\ci\right):=\uo_{\Auniv/\cM}\left(\ci\right)$.
By {Equation \eqref{decomptau}, we have a decomposition
\begin{equation}\label{decomptauinf}
\uo\left(\ci\right) = \bigoplus_{\tau \in \arch} \left(\uo_{\tau}^+\left(\ci\right) \oplus \uo_{\tau}^-\left(\ci\right)\right)
\end{equation}}
{and similarly for $\hdr^1(\ci)$.}

The Hodge decomposition 
$\hdr^1\left(\ci\right)=\uo\left(\ci\right)\oplus\overline{\uo\left(\ci\right)}$ over $\modulispace\left(\ci\right)$ (following the convention in \cite[Section 1.8]{kaCM}, the bar denotes complex conjugation)  induces decompositions \[\hdr^{\pm{,\tau}}\left(\ci\right)=\uo_{{\tau}}^\pm\left(\ci\right)\oplus\overline{\uo_{{\tau}}^\pm\left(\ci\right)},\] 
for all $\tau\in\Sigma$, and the associated projections 
$
\hdr^{\pm{, \tau}}\left(\ci\right)\twoheadrightarrow\uo^{\pm}_{{\tau}}(\ci)
$
induce projections 
\[\varpi_\rho(\ci):\hdr^1\left(\ci\right)^\rho\twoheadrightarrow\uo(\ci)^\rho,\]
for all irreducible representations $\rho$ as above.

As in \cite[(1.8.6)]{kaCM} and \cite[Section 8]{EDiffOps}, 
 $\nabla(\overline{\uo(\ci)})\subseteq\overline{\uo(\ci)}\otimes\Omega_{\modulispace(\ci)/\bC}.$  We define 
\begin{align*}
D_\rho(\ci): \uo(\ci)^\rho\rightarrow\uo(\ci)^\rho\otimes{\bigoplus{_{\tau\in\Sigma}\left(\uo_\tau^+(\ci)\otimes\uo_\tau^-(\ci)\right)}}
\end{align*}
to be the restriction of {$(\varpi_\rho(\ci)\otimes\id)\circ\nabla_\rho$} to $\uo(\ci)^\rho$.

For each irreducible representation $\mathcal{Z}$ of $H$ that is sum-symmetric of some depth $e$, 
let $\pi_{\mathcal{Z}}$ be the
projection of 
${\left(\bigoplus_{\tau\in\Sigma}\left(\uo_\tau^+(\ci)\otimes\uo_\tau^-(\ci)\right)\right)^e}$ onto 
$\uo(\ci)^{\mathcal{Z}}$ defined as in Section \ref{section-Schur2} (i.e. by projection onto summands and applying the generalized Young symmetrizer $c_\cZ$).

We define
\begin{align*}
D_\rho^\mathcal{Z}(\ci) := ({\rm id}\otimes\pi_{\mathcal{Z}})\circ D_{\rho}^e(\ci): \uo(\ci)^\rho \to \uo(\ci)^\rho\otimes \uo(\ci)^\cZ .
\end{align*}
As explained at the end of \cite[Section 8.1]{EDiffOps}, the operators $D_\rho^\mathcal{Z}(\ci)$ canonically induce operators, which we also denote by $D_\rho^\mathcal{Z}$,
 \begin{align*}
D_\rho^\mathcal{Z}(\ci): \cE_{\rho}(\ci)\rightarrow \cE_{\rho\otimes\mathcal{Z}}(\ci).
\end{align*}
(Many additional details of these operators, explicitly for unitary groups of signature $(n,n)$ but which extend by similar arguments to the case of arbitrary signature, which we do not need in the present paper, are discussed in \cite[Section 8]{EDiffOps}.)

Let $\kappa'$ be a sum-symmetric weight of depth $e$, and $\kappa$ be a positive dominant weight.  Take $\rho=\rho_\kappa$ and $\cZ=\rho_{\kappa'}$.  By abuse of notation, we still denote by $\pi_{\kappa, \kappa'}$  the projection $\cE_{\rho\otimes \cZ} \to\cE_{\rho_{\kappa\cdot\kappa'}}$ induced by the projection $\pi_{\kappa, \kappa'}:\rho_\kappa\otimes\rho_{\kappa'}\to \rho_{\kappa\kappa'}$ defined in Lemma \ref{lemma-Schur2}.
We define 
\begin{align*}
D_\kappa^{\kappa'}(\ci):=\pi_{\kappa, \kappa'}\circ D_\rho^\mathcal{\mathcal{Z}}(\ci): \cE_{\kappa} (\ci)\rightarrow \cE_{\kappa\cdot\kappa'} (\ci).
\end{align*}

\subsubsection{$p$-adic differential operators on vector-valued automorphic forms}\label{Dwork-sec} 
We now consider the pullback of the universal abelian scheme $\cA^{\ord}/\Sord$ over $ \Ig$.
In analogue with the Hodge decomposition of $\hdr^1(\ci)$, there is a decomposition over $\Ig$
\begin{align*}
\hdr^1(\cA^{\ord}/\Ig) = \uo_{\cA^{\ord}/\Ig}\oplus\underline{U},
\end{align*}
where
$\underline{U}$ is Dwork's {\it unit root submodule}, introduced in \cite{kad}.
By \cite[Theorem (1.11.27)]{kaCM} and \cite[Proposition V.8]{EDiffOps}, $\nabla(\underline{U})\subset \underline{U}\otimes \Omega^1_{\igusa/\Witt}$.

As before, for each irreducible representation $\rho$ as above, we define
		 \[\varpi_{\rho}(\cA^{\ord}/\Ig): \hdr^1(\cA^{\ord}/\Ig)^{\rho} \twoheadrightarrow {\uo^{\rho}_{\cA^{\ord}/\Ig}}\]
to be the projection induced by Dwork's unit root decomposition 
after applying the Schur functor $(\,)^\rho$.
Note that $\uo_{\cA^{\ord}/\Ig}^{\rho}$ is identified  with the pullback of $\cE_{\rho}$ over $\Ig$ via the definition of Schur functors. 

Analogously to how we defined the $\ci$ differential operators $D_\rho^e\left(\ci\right)$,  $D_\rho^{\mathcal{Z}}\left(\ci\right)$, and $D_\kappa^{\kappa'}(\ci)$ in Section \ref{cidefns-section}, replacing $\overline{\uo(\ci)}$ by $\underline{U}$,
we define $p$-adic differential operators $D_\rho^e\left(\cA^{\ord}/\Ig\right)$,  
$D_{\rho}^{\cZ}(\cA^{\ord}/\Ig)$, and 
$D_\kappa^{\kappa'}(\cA^{\ord}/\Ig)$, for all $e$, $\cZ$, $\kappa$, $\kappa'$, and $\rho$ as above. 

In the sequel, for each sum-symmetric weight $\kappa'$ of depth $e$, and each positive dominant weight $\kappa$, we write
\[{D_\kappa^{\kappa'}}:=D_\kappa^{\kappa'}(\cA^{\ord}/\Ig)
: \cE_{\kappa} \rightarrow \cE_{\kappa\cdot\kappa'}.\]

\section{Localization at an ordinary point}\label{local-section}

The ultimate goal of this section is to describe the action of the differential operators on the $t$-expansions of $p$-adic automorphic forms.
We start by describing the constructions of Section \ref{diffop-sec} in terms of Serre--Tate local parameters, now taking $S=\Sord$ the ordinary locus of a connected component $\cS$ of the Shimura variety $\cM$, and $A/S$ the universal abelian scheme $A = \cA^{\mathrm{ord}}$ (as defined in the beginning of Section \ref{Igusalevel}).

Throughout the section,  we fix a point $x\in\Igusa(\Witt)$ lying above $\xzero\in \Sord(\Witt)$, and denote by $\bar{x},\bar{x}_0$  their reduction modulo $p$. In the following, we write $\cR$ for the complete local ring {$\Ring$ corresponding to $\Sord^\wedge_{\ov x_0}$ introduced in Section \ref{ST-coord}, 
 and $\fm_\cR=\fm_{\ov x_0}$ for its maximal ideal.  {We denote by $\cA_{\xzero}^\ord$ the universal formal deformation of the abelian variety {with additional structures} $A_0=A_{\xzeroreduced}$, i.e. $\cA_{\xzero}^\ord$ is the base change of $\cA^\ord$ from $\Sord$ to $\cR$ and has special fiber $A_0$. By abuse of notation, we also abbreviate $\cA_{\xzero}^\ord$ by $A$.}

\subsection{The Gauss--Manin connection}

In \cite{KaST} Katz explicitly describes the Gauss--Manin connection and Dwork's unit root submodule in terms of the Serre--Tate coordinates.  We recall his results. 

Let $\hat{\cH}$ denote the formal relative de Rham cohomology bundle, $\hat{\cH}=R^1\pi_*(\Omega^1_{A/\cR})$.
We write $\Phi$ for the $\cR$-semilinear action of Frobenius on $\hat{\cH}$, and 
\begin{equation}\label{dualexactsequence}
0\to \underline{\omega}_{A/\cR}\to\hat{\cH}\to \underline{\omega}^\vee_{A^\vee/\cR}\to 0
\end{equation}
for the (localized) Hodge exact sequence over $\cR$ (where we identified $H^1(A,\cO_A)$ with $ \underline{\omega}^\vee_{A^\vee/\cR}$ as in Equation \eqref{xdualsurj-map}).

\begin{prop} \cite[{Cor.~4.2.2}]{KaST}
\label{ST-Hodge}
Notation and assumptions are the same as above.  
\begin{enumerate}
\item There is a canonical Frobenius-equivariant isomorphism
\[\alpha: T_p(A_0^\vee)\otimes \cR\to  \underline{\omega}_{A/\cR},\] 
where the $\cR$-semilinear {action} of Frobenius on the left hand side is defined by extending multiplication by $p$ on $T_p(A_0^\vee)$.

\item There is a canonical Frobenius-equivariant isomorphism
\[\omicron: {\rm Hom}(T_pA_0,\zz_p)\otimes \cR \to \underline{\omega}^\vee_{A^\vee/\cR}\]
where the $\cR$-semilinear action of Frobenius on the left hand side is defined by extending the identity on ${\rm Hom}(T_pA_0,\zz_p)\otimes \cR$.

\item The surjection \[\hat{\cH}\to \underline{\omega}^\vee_{A^\vee/\cR}\simeq {\rm Hom}(T_pA_0,\zz_p)\otimes \cR\]  induces an isomorphism between the $\zz_p$-submodule $L_1$ of $\hat{\cH}$ where $\Phi$ acts trivially and ${\rm Hom}(T_pA_0,\zz_p)$.  The inverse of such an isomorphism defines a canonical splitting of the Hodge exact sequence over $\cR$,
$v: \underline{\omega}^\vee_{A^\vee/\cR}\to\hat{\cH},$
i.e. there is a canonical $\cR$-linear decomposition $\hat{\cH}= \underline{\omega}_{A/\cR}\oplus (L_1\otimes_{\zz_p}\cR).$
\end{enumerate}
\end{prop}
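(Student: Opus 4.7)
The plan is to exploit Serre-Tate canonical lifting in concert with the crystalline/Dieudonn\'e-theoretic description of $\hat{\cH}$. The structural input is the following: since $A_0$ is ordinary over $\overline{\bF}_p$, its connected $p$-divisible subgroup $\hat A_0 = A_0[p^\infty]^\circ$ is a formal torus, and Cartier duality identifies it with the Cartier dual of the \'etale part of $A_0^\vee[p^\infty]$, which yields the canonical isomorphism $\hat A_0 \simeq T_p A_0^\vee \otimes_{\bZ_p} \widehat{\bG}_m$ over $\overline{\bF}_p$. Because formal tori admit a unique deformation as formal Lie groups to any $\bZ_p$-flat formal base, the formal group $\hat A$ of the universal deformation $A/\cR$ is canonically the base change of this formal torus to $\cR$. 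This is the ``formal direction'' of Serre-Tate.

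For part (1) I would use that $\underline{\omega}_{A/\cR} = \underline{\omega}_{\hat A/\cR}$ (the \'etale quotient of the $p$-divisible group contributes no differentials) to compute
\[
\underline{\omega}_{A/\cR} \simeq T_p A_0^\vee \otimes_{\bZ_p} \underline{\omega}_{\widehat{\bG}_m/\cR} \simeq T_p A_0^\vee \otimes_{\bZ_p} \cR,
\]
using the canonical invariant generator $d\log t$. This gives $\alpha$. For the Frobenius-equivariance, the absolute Frobenius on $\widehat{\bG}_m$ is $[p]$ (multiplication by $p$), pulling $d\log t$ back to $p \cdot d\log t$; hence the crystalline Frobenius acts on $\underline{\omega}_{A/\cR}$ as $p \otimes \sigma_{\cR}$ in these coordinates, which matches the Frobenius on $T_p A_0^\vee \otimes \cR$ prescribed in the statement.

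Part (2) follows by applying part (1) to the dual family $A^\vee/\cR$, which is itself the universal deformation of $A_0^\vee$ (the formation of the dual commutes with Serre-Tate deformation theory, compatibly with the polarization). That yields $\underline{\omega}_{A^\vee/\cR} \simeq T_p A_0 \otimes_{\bZ_p} \cR$ on which $\Phi$ acts by $p \otimes \sigma_\cR$; $\cR$-linear duality then converts multiplication by $p$ on $T_p A_0$ into the identity on $\mathrm{Hom}(T_p A_0, \bZ_p)$, producing $\omicron$. Equivalently, Cartier duality interchanges $F$ and $V$ on $p$-divisible groups, and $V$ acts as the identity on the \'etale side.

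For part (3), $\hat{\cH}$ is the evaluation on $\cR$ of the crystal attached to $A_0[p^\infty]$ together with its crystalline Frobenius, and the ordinary decomposition $A_0[p^\infty] = \hat A_0 \oplus A_0[p^\infty]^{\text{\'et}}$ provides a $\Phi$-stable splitting of $\hat{\cH}$ over $\cR$ into a multiplicative summand (which coincides with $\underline{\omega}_{A/\cR}$ by part (1), and on which Frobenius is divisible by $p$) and an \'etale summand (on which Frobenius is a unit). The submodule $L_1$ of $\Phi$-fixed vectors therefore lies entirely in the \'etale summand and is canonically identified with $\mathrm{Hom}(T_p A_0, \bZ_p)$ by the standard description of \'etale Dieudonn\'e modules. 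The composition $L_1 \otimes_{\bZ_p} \cR \hookrightarrow \hat{\cH} \twoheadrightarrow \underline{\omega}^\vee_{A^\vee/\cR}$ is then an isomorphism, since it annihilates only the multiplicative summand, and this gives the canonical splitting $v$. I expect the main delicacy to be bookkeeping the Frobenius across the Cartier and Dieudonn\'e dualities: once the canonical lift of $\hat A_0$ is pinned down as $T_p A_0^\vee \otimes \widehat{\bG}_m$ over $\cR$ and the action of $\Phi$ on $\underline{\omega}_{\widehat{\bG}_m}$ is identified with $p \cdot \sigma_\cR$, the remaining assertions are essentially formal consequences.
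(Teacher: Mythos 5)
The paper offers no proof of this proposition: it is quoted directly from Katz \cite[Cor.~4.2.2]{KaST}, so there is no internal argument to compare against. Your sketch is, in outline, a faithful reconstruction of Katz's own proof — ordinarity makes $\hat A_0$ a formal torus, rigidity of formal tori (equivalently, of the Dieudonn\'e crystal, which depends only on the special fibre) transports the resulting trivializations from $\overline{\bF}_p$ to $\cR$, and the splitting in (3) comes from the divisibility of $\Phi$ on the multiplicative summand versus its bijectivity on the \'etale one. As a strategy this is correct and essentially complete.

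Three of the dualities you flag as ``delicate'' are in fact slightly off as written. First, Cartier duality gives $\hat A_0 \simeq \Hom_{\bZ_p}(T_p(A_0^\vee),\bZ_p)\otimes\hat{\bG}_m$, not $T_p(A_0^\vee)\otimes\hat{\bG}_m$; your displayed formula for $\uo_{A/\cR}$ is nevertheless right because the cotangent functor is contravariant, so the two dualizations cancel. Second, in (2), ``$\cR$-linear duality converts multiplication by $p$ into the identity'' is not literally true (the transpose of $p$ is $p$); what absorbs the factor of $p$ is the Tate twist in the Poincar\'e duality pairing $H^1_{dR}(A)\otimes H^1_{dR}(A^\vee)\to\cR(-1)$, on which $\Phi$ acts by $p\sigma$ — or, in your alternative formulation, the fact that $F$ acts as $\sigma$ on the Dieudonn\'e module of the \'etale part (there $V=p\sigma^{-1}$, so ``$V$ acts as the identity on the \'etale side'' is the wrong one of the pair). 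Third, in (3) the $\Phi$-stable splitting of $\hat{\cH}$ is \emph{not} induced by a decomposition of $A[p^\infty]$ over $\cR$: the connected--\'etale sequence of the deformed group does not split unless $A$ is the canonical lift. It is induced by the splitting of $A_0[p^\infty]$ over the residue field together with the crystalline rigidity of $\hat{\cH}$; one must then separately observe that the Hodge filtration $\uo_{A/\cR}$ (which \emph{does} depend on the lift) is a complement to the \'etale summand, because the Dieudonn\'e crystal of a formal torus equals its own Hodge filtration. Finally, to get $L_1\simeq\Hom(T_pA_0,\bZ_p)$ on the nose you also need $\cR^{\Phi=1}=\bZ_p$ and topological nilpotence of $\Phi$ on $\uo_{A/\cR}$ to rule out fixed vectors with a nonzero $\uo$-component. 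None of these is a fatal gap, but each is exactly the kind of sign/dual error that silently breaks the Frobenius normalizations the paper later relies on in Proposition \ref{horizontal}.
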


\begin{rmk}\label{u} The submodule $L_1\otimes_{\zz_p} \cR$ agrees with the {base change} $\underline{U}_{{x}_0}$
of Dwork's unit root submodule $\underline{U}$ to $\cR$ introduced in Section \ref{Dwork-sec}. In the following, we write $\underline{U}_{\cR}\subset \hat{\cH}$ for the the submodule $L_1\otimes_{\zz_p}\cR \subset \hat{\cH}$ and denote by $u:\hat{\cH}\to  \underline{\omega}_{A/\cR}$ the projection modulo $\underline{U}_{\cR}$.
\end{rmk}

\begin{rmk}
In our setting, the action of $\cO_K$ on the abelian scheme $A/\cR$ induces natural structures of $\cO_K$-modules on  $T_p(A_0^\vee),
 {\rm Hom}(T_pA_0,\zz_p)$, $\underline{\omega}_{A/\cR}$, and $\underline{\omega}^\vee_{A^\vee/\cR}$. It follows from the construction that the isomorphisms $\alpha$ and $\omicron$  are $\cO_K$-linear. 
\end{rmk}

By abuse of notation we will still denote by $\nabla$ the Gauss--Manin connection on $\hat{\cH}/\cR$,    \[\nabla:\hat{\cH}\to\hat{\cH}\otimes_\cR \Omega^1_{\cR/\Witt}.\]

In the following proposition,  we denote by $\fq$  the universal bilinear form on $\cR$ introduced in Equation \eqref{fq-equ}, and by $T_p(\lambda)$ the isomorphism of physical Tate modules, $T_pA_0\cong T_p(A^\vee_0)$, induced by the polarization $\lambda$. {Finally, for any $\zz_p$-basis $\fT$ of $T_pA_0$, we denote by $\{\delta_Q|Q\in\fT\}$ the associated dual basis of ${\rm Hom}(T_pA_0,\zz_p)$.}

\begin{prop} \label{horizontal} (\cite[{Thm.~4.3.1}]{KaST})
The notation is the same as in Proposition \ref{ST-Hodge}.
\begin{enumerate}

\item For each $\delta\in {\rm Hom}(T_pA_0,\zz_p)$, the differential $\eta_\delta:=v(\omicron(\delta))\in L_1$ satisfies $\nabla\eta_\delta=0$.\label{item1}

\item {For each $e\in T_p(A_0^\vee)$,  the differentials $\omega_e=\alpha(e)\in \underline{\omega}_{A/\cR}$ satisfy \[\nabla \omega_e =\sum_{Q \in \fT} \eta_{\delta_Q}\otimes d{\rm log }\,\fq(Q,T_p(\lambda)^{-1}(e)),\] for any $\zz_p$-basis $\fT$ of $T_pA_0$.}\label{item2}
\end{enumerate}
\end{prop}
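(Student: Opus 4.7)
The plan is to adapt Katz's original argument from \cite{KaST}, whose two key ingredients are the Frobenius equivariance of the Gauss--Manin connection and an explicit computation of Kodaira--Spencer in Serre--Tate coordinates. For part \ref{item1}, the starting point is the standard identity $\nabla \circ \Phi = (\Phi \otimes dF) \circ \nabla$, where $F:\cR\to\cR$ is the canonical lift of Frobenius sending each Serre--Tate parameter $t_l\mapsto t_l^p$. Since $dF(dt_l/t_l)=p\,dt_l/t_l$, the endomorphism $dF$ carries $\Omega^1_{\cR/\bW}$ into $p\,\Omega^1_{\cR/\bW}$. Both summands of the canonical decomposition $\hat\cH = \underline{\omega}_{A/\cR}\oplus (L_1\otimes_{\zz_p}\cR)$ are Frobenius-stable by their construction (via the Frobenius-equivariance of $\alpha$ and the definition of $L_1$), so $\Phi\otimes dF$ preserves the induced decomposition of $\hat\cH\otimes\Omega^1$. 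For $\eta=\eta_\delta\in L_1$ we have $\Phi\eta=\eta$, hence $\nabla\eta = \nabla\Phi\eta = (\Phi\otimes dF)\nabla\eta$; iterating $n$ times places $\nabla\eta$ in $p^n(\hat\cH\otimes\Omega^1_{\cR/\bW})$ for every $n$, and $p$-adic separatedness of $\cR$ forces $\nabla\eta=0$.

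For part \ref{item2}, I would first decompose $\nabla\omega_e = \omega'+\eta'$ with $\omega'\in \underline{\omega}_{A/\cR}\otimes\Omega^1_{\cR/\bW}$ and $\eta'\in (L_1\otimes\cR)\otimes\Omega^1_{\cR/\bW}$, and show $\omega'=0$ by a Frobenius argument parallel to that above. The Frobenius-equivariance of $\alpha$ (Proposition \ref{ST-Hodge}(1)) translates into $\Phi(\omega_e)=p\,\omega_e$, so $\Phi$ restricted to $\underline{\omega}_{A/\cR}$ is $p$ times a semilinear operator $\Phi'$. Applying $\nabla$ to both sides of $\Phi\omega_e = p\omega_e$ and using $\nabla\Phi = (\Phi\otimes dF)\nabla$ together with $dF\in p\cdot\mathrm{End}(\Omega^1_{\cR/\bW})$, then separating the two components, yields $\omega' = p\cdot(\Phi'\otimes p^{-1}dF)\omega'$ and $\eta' = (\Phi|_{L_1}\otimes p^{-1}dF)\eta'$. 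The first identity forces $\omega'\in\bigcap_n p^n(\underline{\omega}_{A/\cR}\otimes\Omega^1)$, hence $\omega'=0$, so $\nabla\omega_e$ lies entirely in $(L_1\otimes\cR)\otimes\Omega^1_{\cR/\bW}$.

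It remains to identify $\eta'$ with the claimed expression. Modulo $\underline{\omega}_{A/\cR}\otimes\Omega^1$, the element $\nabla\omega_e$ is by definition the image of $\omega_e$ under the Kodaira--Spencer morphism $\underline{\omega}_{A/\cR}\to \underline{\omega}^\vee_{A^\vee/\cR}\otimes\Omega^1_{\cR/\bW}$. Under the canonical trivializations $\alpha$ and $\omicron$ of Proposition \ref{ST-Hodge} and the Serre--Tate isomorphism of Theorem \ref{ST-coord-thm}, Kodaira--Spencer pulls back to a $\zz_p$-bilinear pairing on $T_p(A_0^\vee)\times T_p A_0$ with values in $\Omega^1_{\cR/\bW}$; via the polarization $\lambda$ and the identification of the cotangent space of the Serre--Tate formal group at the origin with symmetric hermitian forms, this pairing must be the differential of $\fq$. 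Explicitly, pairing $\omega_e$ with $\omicron(\delta_Q)$ yields $d\log\fq(Q, T_p(\lambda)^{-1}(e))$, and re-expressing the answer in $L_1\otimes\cR\otimes\Omega^1$ via the basis $\{\eta_{\delta_Q}\}$ produced in part \ref{item1} gives the stated formula.

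The main obstacle is the last step: rigorously identifying Kodaira--Spencer with $d\log\fq$ in Serre--Tate coordinates. In Katz's treatment this is carried out via Messing's theory of the universal extension, which realizes $\hat\cH$ as the Lie algebra of the universal vector extension of $A$ and exhibits the Hodge filtration as the kernel of the projection to $\mathrm{Lie}(A)$; the extension class that records how this kernel varies is precisely the symmetric hermitian cocycle $\fq$, and differentiating produces the logarithmic differential on the right-hand side. Verifying that the bookkeeping of polarization and $\cO_K$-action matches requires care but is formal once the Frobenius-equivariant splitting and the universal property of $\fq$ are in place.
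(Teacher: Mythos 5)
The paper does not prove this proposition: it is quoted directly from Katz (Theorem~4.3.1 of \cite{KaST}), so the comparison must be with Katz's own argument, and your reconstruction tracks it faithfully. The Frobenius-divisibility argument for part~(1) is complete and correct (up to the harmless notational slip of writing $dt_l/t_l$ for $d\log(1+t_l)=dt_l/(1+t_l)$; in this paper's coordinates the multiplicative parameter is $q_l=1+t_l$), and the same device correctly forces the $\underline{\omega}_{A/\cR}\otimes\Omega^1_{\cR/\Witt}$-component of $\nabla\omega_e$ to vanish in part~(2), using that $\Phi$ is divisible by $p$ on $\underline{\omega}_{A/\cR}$ while $dF$ is divisible by $p$ on $\Omega^1_{\cR/\Witt}$. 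The one step you leave as a sketch is precisely where the content of Katz's theorem is concentrated: the identification of the Kodaira--Spencer pairing, read through the trivializations $\alpha$ and $\omicron$ and the Serre--Tate isomorphism, with $(e,Q)\mapsto d\log\fq(Q,T_p(\lambda)^{-1}(e))$. This is not formal bookkeeping; in \cite{KaST} it is the heart of the matter, resting on the construction of $\fq$ via the Weil pairings $e_{p^n}$ on torsion points (equivalently the $\gm$-biextension structure) and its comparison with the universal vector extension, and it occupies the bulk of Katz's paper. Two cautions if you were to complete the argument: first, in the present paper the explicit formula for ${\rm KS}$ in Serre--Tate coordinates (Proposition~\ref{Prop-421}) is \emph{deduced from} this proposition, so invoking that formula here would be circular --- your appeal to the universal extension avoids this, but the logical independence should be made explicit; second, the $\cO_K$-linearity and polarization compatibilities you describe as routine are exactly what confine $\fq$ to the summands $T_{\fP_i}A_0\otimes T_{\fP_i^c}A_0$, so they need to be checked rather than asserted.
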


Note that Part \eqref{item1} implies that $\nabla(\underline{U}_{\cR})\subset \underline{U}_{\cR}\otimes _\cR \Omega^1_{\cR/\Witt}$, 
as stated in Section \ref{Dwork-sec}. Also, Part \eqref{item2} implies that for each $e\in T_p(A_0^\vee)$,  the differentials $\omega_e$ satisfy 
$\nabla \omega_e\in \underline{U}_{\cR} \subset \hat{\cH},$ i.e., 
 $u(\nabla \omega_e)=0$.

\subsection{The Kodaira--Spencer morphism}  
In this section, we explicitly describe the Kodaira--Spencer morphism in terms of the Serre--Tate coordinates.  By abuse of notation we will still denote by ${\rm KS}$ the localization at the point $\xzero\in \Sord(\Witt)$
of the Kodaira--Spencer morphism, i.e.
\[{\rm KS}: \underline{\omega}_{A/\cR}\otimes_{\cR} \underline{\omega}_{A/\cR}\to \Omega^1_{\cR/\Witt}.\]

\begin{prop} \label{Prop-421}
 For all $P\in T_pA_0$, let $\omega_P:=\alpha(T_p(\lambda)(P))$. Then, for all $P,P'\in T_pA_0$ and any $\zz_p$-basis $\fT$ of $T_pA_0$,  
\[{\rm KS}(\omega_P {\otimes} \omega_{P'})=\sum_{Q\in \fT}(Q,P')_{\lambda}d{\rm log}\fq(Q,P) ,\] where $(,)_\lambda$ denotes the $\lambda$-Weil pairing on $T_pA_0$.\end{prop}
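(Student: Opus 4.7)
The plan is to unwind the four--step definition of $\mathrm{KS}$ from Section \ref{KS-section} applied to $\omega_P \otimes \omega_{P'}$, and to evaluate each step in Serre--Tate coordinates by means of Propositions \ref{ST-Hodge} and \ref{horizontal}.

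The starting point is Proposition \ref{horizontal}(2): specializing to $e = T_p(\lambda)(P)$ so that $\omega_e = \omega_P$ and $T_p(\lambda)^{-1}(e) = P$, we obtain
\[
\nabla \omega_P \;=\; \sum_{Q \in \mathfrak{T}} \eta_{\delta_Q} \otimes d\log \mathfrak{q}(Q, P)
\;\in\; \underline{U}_{\cR} \otimes_{\cR} \Omega^1_{\cR/\Witt}.
\]
Regarding $\omega_{P'}$ as an element of $\uo_{A^\vee/\cR}$ via the polarization $\lambda^\ast$ and combining with the above, steps [1] and [2] of $\mathrm{KS}$ produce $\sum_Q \eta_{\delta_Q} \otimes d\log \mathfrak{q}(Q,P) \otimes \omega_{P'}$ in $\hat{\cH} \otimes \Omega^1_{\cR/\Witt} \otimes \uo_{A^\vee/\cR}$.

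By Remark \ref{u} and Proposition \ref{ST-Hodge}(3), the projection $u: \hat{\cH} \twoheadrightarrow \uo^\vee_{A^\vee/\cR}$ modulo $\underline{U}_{\cR}$ sends $\eta_{\delta_Q} = v(\mathrm{o}(\delta_Q))$ to $\mathrm{o}(\delta_Q)$; hence step [3] yields
\[
\sum_{Q \in \mathfrak{T}} \mathrm{o}(\delta_Q) \otimes \omega_{P'} \otimes d\log \mathfrak{q}(Q, P)
\;\in\; \uo^\vee_{A^\vee/\cR} \otimes \uo_{A^\vee/\cR} \otimes \Omega^1_{\cR/\Witt}.
\]
Step [4] then applies the canonical pairing $\uo^\vee_{A^\vee/\cR} \otimes \uo_{A^\vee/\cR} \to \cO_{\cR}$ to the first two factors, producing an element of $\Omega^1_{\cR/\Witt}$.

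The remaining task is to show that, for each $Q \in \mathfrak{T}$, the pairing $\langle \mathrm{o}(\delta_Q), \omega_{P'}\rangle$ (with $\omega_{P'}$ viewed in $\uo_{A^\vee/\cR}$ via $\lambda^\ast$) equals $(Q, P')_\lambda$. This rests on the canonical Cartier--duality identification of $\mathrm{Hom}(T_pA_0, \zz_p)$ with $T_p(A_0^\vee)$ coming from the Weil pairing, together with the compatibility of the canonical isomorphism $\alpha$ for $A$ with its counterpart for $A^\vee$ and with the polarization $\lambda^\ast$. This is the main obstacle of the proof: every other step is a direct rewriting using the formulas of Propositions \ref{ST-Hodge} and \ref{horizontal}, but correctly identifying the natural pairing $\uo^\vee_{A^\vee/\cR} \otimes \uo_{A^\vee/\cR} \to \cO_{\cR}$, composed with the polarization twist, with the $\lambda$-Weil pairing on $T_p A_0 \otimes T_p A_0$ requires careful bookkeeping of the Tate-module identifications coming from the ordinary formal structure of $A[p^\infty]$.
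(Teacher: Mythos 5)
Your proof is correct and follows essentially the same route as the paper's: unwind the four-step definition of $\mathrm{KS}$, compute $\nabla\omega_P$ via Proposition \ref{horizontal}(2), use the splitting $v$ from Proposition \ref{ST-Hodge}(3) to see that the Hodge-sequence surjection sends $\eta_{\delta_Q}$ to $\omicron(\delta_Q)$, and conclude with the pairing identity $\langle \omicron(\delta_Q),\lambda(\omega_{P'})\rangle=(Q,P')_\lambda$, which the paper likewise asserts directly from the definitions without spelling out the Weil-pairing bookkeeping you flag. One minor slip that does not affect the computation: the surjection used in step [3] is the Hodge-exact-sequence projection $\pi:\hat{\cH}\to\uo^\vee_{A^\vee/\cR}$, whose kernel is $\uo_{A/\cR}$, not the unit-root projection $u$ modulo $\underline{U}_{\cR}$ of Remark \ref{u} (which lands in $\uo_{A/\cR}$); since $\pi\circ v=\id$, your evaluation $\pi(\eta_{\delta_Q})=\omicron(\delta_Q)$ still stands.
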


\begin{proof}
By definition, for any $\omega_1,\omega_2\in \underline{\omega}_{A/\cR}$,
\[{\rm KS}(\omega_1 {\otimes} \omega_2)= \langle (\pi\otimes\id)(\nabla\omega_1), \lambda(\omega_2)\rangle\in\Omega_{\cR/\Witt},\]
where  $\pi:\hat{\cH}\to \underline{\omega}_{{A}^\vee/\cR}^\vee$ is the projection in the Hodge exact sequence, $\lambda: \underline{\omega}_{A/\cR}\isomto  \underline{\omega}_{A^\vee/\cR}$ is the isomorphism induced by the polarization $\lambda$ on $A$, and  
$\langle,\rangle:  \left(\underline{\omega}_{A^\vee/\cR}^\vee{\otimes\Omega^1_{A/\cR}}\right)\times  {\underline{\omega}_{A^\vee/\cR}\to\Omega^1_{A/\cR}}$ is the map obtained {by} extending the natural pairing  $(,):\underline{\omega}_{A^\vee/\cR}^\vee\times  \underline{\omega}_{A^\vee/\cR}\to \cR$ by the identity map on  $ \Omega^1_{A/\cR}$.

Let us fix a basis $\fT$ of $T_pA_0$;
for all $P\in \fT$, we write $\eta_P:= \eta_{\delta_P} = v(a(\delta_P))$. 
 We deduce from the definitions and Proposition \ref{horizontal} that for all $P,Q\in \fT$, \[\langle \pi(\eta_Q),\lambda(\omega_P)\rangle=(Q,P)_\lambda, \text{ and }
\nabla\omega_P=\sum_{Q\in\fT}\eta_Q\otimes d{\rm log}\fq(Q,P).\]
Thus,  for all $P,P'\in \fT$, we have
\[{\rm KS}(\omega_P {\otimes} \omega_{P'})=\langle (\pi\otimes\id)\nabla(\omega_P),\lambda(\omega_{P'})\rangle=\]\[=\langle \sum_{Q\in\fT} \pi(\eta_Q),\lambda(\omega_{P'})\rangle  d{\rm log}\fq(Q,P)=\sum_{Q\in \fT}(Q,P')_{\lambda}d{\rm log}\fq(Q,P) .\]

\end{proof}

\begin{rmk}
Theorem \ref{ST-coord-thm} implies that for any $P\in T_{\fP_i}A_0$, $\fq(Q,P)=0$  unless $Q\in T_{\fP_i^c}A_0$. Thus, the morphism ${\rm KS}$ factors via the quotient $\underline{\omega}_{A/\cR}^2=\oplus_{\tau\in\arch}\left(\uo_{A/\cR, \tau}^+\otimes\uo_{A/\cR, \tau}^-\right)$, and Proposition \ref{ks} implies that the induced map is an isomorphism. 
\end{rmk}

\subsection{The differential operators}\label{dox}

Finally, in this section  we explicitly describe the differential operators  in terms of the Serre--Tate coordinates.  
By abuse of notation, we will still denote by $D_\rho$ (resp. $D_\rho^e$) the localization at $\xzero$ of the differential operators $D_\rho$ (resp. $D_\rho^e$) introduced in Section \ref{doperator}{, i.e. its base change to $\cR$}.

We briefly recall the constructions. 
Let  $u:\hat{\cH}\to \underline{\omega}_{A/\cR}$ denote the projection modulo $\underline{U}_\cR$ (as defined in  Remark \ref{u}).  We define 
\begin{align}\label{D-def}
D:=(u\otimes \id)\circ \left.\nabla\right|_{\underline{\omega}_{A/\cR}}: \underline{\omega}_{A/\cR}\to \underline{\omega}_{A/\cR}\otimes_{\cR} \Omega^1_{\cR/\Witt},
\end{align}
 where $\id$ denotes the identity map on $\Omega^1_{\cR/\Witt}$. 

By abuse of notation, we still denote by ${\rm ks}^{-1}:\Omega{^1}_{\cR/\Witt}\isomto 
\underline{\omega}_{A/\cR}^2\subset \underline{\omega}_{A/\cR}^{\otimes 2}$ the localization of the inverse of the Kodaira--Spencer isomorphism defined in Proposition \ref{etaleks-prop}. 

For all positive integers $d$ and $e$, we write 
$$D_{\otimes d}:= (u^{\otimes d}\otimes \id)\circ \left.\nabla_{\otimes d}\right|_{\underline{\omega}^{\otimes d}_{A/\cR}}:  \underline{\omega}^{\otimes d}_{A/\cR}\to \underline{\omega}^{\otimes d}_{A/\cR}\otimes_{\cR} \Omega^1_{\cR/\Witt},$$
$D^1_{\otimes d}:= (\id_{\underline{\omega}^{\otimes d}_{A/\cR}} \otimes {\rm ks}^{-1})\circ  D_{\otimes d}
$ and $D^e_{\otimes d}:=D^1_{\otimes d+2e-2}\circ \cdots \circ  D^1_{\otimes d}$.

Let  $\cE_{\rho,\xzero}$ denote the localization of $\cE_{\rho}$, i.e. 
 the base change to $\cR$.  For any {irreducible} representation $\rho=\rho_\kappa$, and positive integer $e$, 
the differential operator
\[D^e_\rho:\cE_{\rho,\xzero} \to\cE_{\rho, \xzero} \otimes_\cR (\underline{\omega}_{A/\cR}^2)^{\otimes e}\]
is {induced by the restrictions} of $D^e_{\otimes d_\kappa}$ {to $\uo^\rho_{A/\cR}$}. 
In the following, we also write  $D_\rho=D^1_\rho$.

\subsubsection{Local description}
We fix a point $x\in\Igusa(\Witt)$ lying above $\xzero$, and define $\alpha_x$ to be the $\cO_K\otimes_{\zz} \cR$-linear isomorphism
 \[\alpha_x:= \alpha\circ (T_p(\iota_x^\vee)^{{\vee}}\otimes\id):  \cL\otimes_{\zz_p} \cR \isomto T_pA_0^\vee\otimes_{\zz_p} \cR\to\underline{\omega}_{A/\cR},\]
where $T_p(\iota_x^\vee): T_pA_0\to \cL^\vee$ is defined as in \eqref{TateLattice} and  $\alpha:   T_p{(}A_0^\vee{)}\otimes_{\zz_p} \cR\to\underline{\omega}_{A/\cR} $  {as} in Proposition \ref{ST-Hodge}(1), and we
identify $T_p(A_0^\vee)$ with $(T_pA_0)^\vee$ via the Weil pairing.

By linearity, we deduce that the isomorphism $\alpha_x$ induces isomorphisms
\[\alpha_{x,\tau}^+: \cL^+_\tau\otimes_{\zz_p}\cR\isomto \underline{\omega}_{A/\cR,\tau}^+ \text{ and }
\alpha_{x,\tau}^-: \cL^-_\tau\otimes_{\zz_p}\cR\isomto \underline{\omega}_{A/\cR,\tau}^-,\]
for each $\tau \in\Sigma$.
We write 
\[\alpha_x^2:=\bigoplus_{\tau \in\Sigma} (\alpha_{x,\tau}^+\otimes \alpha_{x,\tau}^-):\cL^2\otimes_{\zz_p}\cR \isomto \underline{\omega}_{A/\cR}^2,\]
(recall  $\cL^2=\bigoplus_{\tau\in \Sigma}(\cL^+_\tau\otimes_{\zz_p}\cL^-_\tau)$),
and
\[{\rm ks}_x:={\rm ks}\circ \alpha_x^2: \cL^2\otimes_{\zz_p} \cR\isomto \Omega^1_{\cR/\Witt}.\] 
For any {irreducible} representation $\rho$, the map $\alpha_x$ also induces an $\cO_K\otimes_{\zz} \cR$-linear isomorphism
\[\alpha_x^\rho:\cL^\rho\otimes_{\zz_p}\cR \isomto \cE_{\rho,\xzero} {,}\]
{via the identification of $\cE_{\rho,\xzero}$ with $\uo^\rho_{A/\cR}$ defined by $x$.}

  Finally, for all $e\in\bN$, we define  $\alpha_x^{\rho,e}:=\alpha_x^\rho \otimes_\cR (\alpha^2_x)^{\otimes e}$,
\[ \alpha_x^{\rho,e}:
\cL^\rho\otimes_{\zz_p}(\cL^2)^{\otimes e}\otimes_{\zz_p} \cR =(\cL^\rho\otimes_{\zz_p}\cR)\otimes_{\cR} ((\cL^2)^{\otimes e}\otimes_{\zz_p}\cR) \isomto \cE_{\rho,\xzero}\otimes_\cR (\underline{\omega}_{A/\cR}^2)^{\otimes e}.\]

Let $d:\cR\to\Omega{^1}_{\cR/\Witt}$ denote the universal $\Witt$-derivation on $\cR$. We define
\[\Xi:={\rm ks}^{-1}_x\circ d:\cR\to \cL^2\otimes\cR.\]
For any integer $e\in\bN$, we write $$\Xi^e:={(\id_{(\cL^2)^{\otimes(e-1)}} \otimes \Xi)\circ \hdots \circ \Xi:\cR \to (\cL^2)^{\otimes e}\otimes_{\zz_p} \cR}.$$

\begin{prop}\label{Disd} For any {irreducible} representation $\rho$, and any integer $e\in\bN$,

\[({ \alpha^{\rho, e}_x})^{-1}\circ D^e_\rho\circ \alpha^\rho_x = 
\id\otimes \Xi^{e}:\cL^\rho\otimes_{\zz_p} \cR\to \cL^\rho\otimes_{\zz_p}  (\cL^2)^{\otimes e}\otimes_{\zz_p} \cR,\]

\end{prop}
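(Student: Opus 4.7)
The plan is to proceed by induction on $e$, reducing the general case to the base case $e = 1$ with $\rho$ the standard representation.  The key input is Proposition~\ref{horizontal}(2), which shows that the Katz trivialization sections $\omega_v := \alpha_x(v)$ for $v \in \cL$ satisfy $\nabla(\omega_v) \in \underline{U}_\cR \otimes \Omega^1_{\cR/\Witt}$, and are therefore annihilated by $u \otimes \id$.

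For the base case $e=1$ with $\rho$ standard, I would write any local section of $\uo_{A/\cR}$ as $s \cdot \omega_v$ with $s \in \cR$ and $v \in \cL$, and apply Leibniz for the Gauss--Manin connection:
\[
\nabla(s \cdot \omega_v) = \omega_v \otimes ds + s \cdot \nabla(\omega_v).
\]
The second summand is killed by $(u \otimes \id)$ via Proposition~\ref{horizontal}(2), so $D(s \cdot \omega_v) = \omega_v \otimes \mathrm{ks}^{-1}(ds) = \omega_v \otimes \alpha_x^2(\Delta(s))$ (using $\mathrm{ks}^{-1} = \alpha_x^2 \circ \mathrm{ks}_x^{-1}$ and $\Delta = \mathrm{ks}_x^{-1} \circ d$).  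This coincides with $\alpha_x^{\rho, 1}((\id \otimes \Delta)(v \otimes s))$, establishing the formula.  For general $\rho = \rho_\kappa$, Proposition~\ref{schur} shows $\nabla^e_\rho$ factors through the generalized Young symmetrizer $c_\kappa$; since $u_\rho = u^{\otimes d_\kappa}$ and $\alpha_x^\rho$ also commute with $c_\kappa$ by construction, the statement reduces to the tensor-power case $\uo^{\otimes d_\kappa}$, which follows by applying the base-case argument to each of the $d_\kappa$ factors.

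For the inductive step, I would use $\nabla^e_{\otimes d} = \nabla_{\otimes (d + 2(e-1))} \circ \nabla^{e-1}_{\otimes d}$ and apply Leibniz to the outer $\nabla$.  Writing $\Delta^{e-1}(s) = \sum_I \ell_I \otimes s_I$ with basis elements $\ell_I \in (\cL^2)^{\otimes (e-1)}$ and coefficients $s_I \in \cR$, the inductive hypothesis identifies the portion of $\nabla^{e-1}_{\otimes d_\kappa}(s \cdot \alpha_x^{\otimes d_\kappa}(w))$ (for $w \in \cL^{\otimes d_\kappa}$) surviving the $u$-projection with $\sum_I s_I \cdot \alpha_x^{\otimes d_\kappa}(w) \otimes \alpha_x^{2 \otimes (e-1)}(\ell_I)$.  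Distributing the outer $\nabla$ by Leibniz splits the contributions into:  (a) terms where $\nabla$ hits an $\alpha_x$-trivialized factor (either in $\alpha_x^{\otimes d_\kappa}(w)$ or in one of the $\alpha_x^{2 \otimes (e-1)}(\ell_I)$), landing in $\underline{U}_\cR$-valued positions that persist under any further applications of $\nabla$ by Proposition~\ref{horizontal}(1) and are killed by the composite projection; and (b) terms differentiating a scalar $s_I$, contributing $\mathrm{ks}^{-1}(ds_I) = \alpha_x^2(\Delta(s_I))$ after the Kodaira--Spencer identification.  Summing the type-(b) contributions yields $\alpha_x^{\otimes d_\kappa}(w) \otimes \alpha_x^{2 \otimes e}(\sum_I \ell_I \otimes \Delta(s_I)) = \alpha_x^{\rho, e}(w \otimes \Delta^e(s))$, completing the induction.

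The main obstacle is the careful bookkeeping in the inductive step:  verifying that type-(a) contributions are indeed annihilated by the composite projection uses both parts of Proposition~\ref{horizontal}---part~(2) to produce the initial $\underline{U}_\cR$-valued corrections, and part~(1) together with the Leibniz rule to ensure their persistence through further $\nabla$-applications so that they cannot contribute any $\uo^\rho \otimes (\uo^2)^{\otimes e}$-valued residue.
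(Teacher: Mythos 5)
Your proposal is correct and follows essentially the same route as the paper: Proposition \ref{horizontal} shows that the Katz sections $\omega_v=\alpha_x(v)$ are sent by $\nabla$ into $\underline{U}_{\cR}\otimes_{\cR}\Omega^1_{\cR/\Witt}$, so in the $\alpha_x$-trivialization the operator $D$ becomes $\id\otimes d$, and the general statement follows by Schur functoriality and iteration. The paper compresses the $e\geq 2$ step into a single sentence (``follows from the case $e=1$''); your bookkeeping of the type-(a) terms via $\nabla(\underline{U}_{\cR})\subset\underline{U}_{\cR}\otimes_{\cR}\Omega^1_{\cR/\Witt}$ is precisely the justification it leaves implicit.
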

\begin{proof}
Proposition \ref{horizontal} implies $(\alpha\otimes \id)^{-1}\circ D\circ \alpha=\id\otimes d$, with $D$ defined as in \eqref{D-def}. We deduce that 
 \[( \alpha_x\otimes \id)^{-1}\circ D\circ \alpha_x = \id\otimes d :\cL\otimes_{\zz_p} \cR\to \cL\otimes_{\zz_p} \Omega^{1}_{R/\Witt}.\]
Therefore, for any  representation $\rho$, we have
\[(\alpha_x^\rho\otimes \id)^{-1}\circ D_\rho \circ \alpha^\rho_x=\id\otimes {\rm ks}^{-1}\circ d:\cL^\rho\otimes_{\zz_p}\cR\to \cL^\rho\otimes_{\zz_p}  \underline{\omega}^2_{A/\cR} ,\]
and thus also ${(\alpha^{\rho, 1}_x)^{-1}}\circ D_\rho\circ \alpha^\rho_x = 
\id\otimes \Xi$. 

The general case, for $e\geq 2$, follows from the case $e=1$.
\end{proof}

\subsubsection{Explicit description of $\Xi$ in terms of Serre--Tate coordinates}\label{section-basisdef}
We conclude this section with an explicit description of the map $\Xi$ in terms of Serre--Tate coordinates.
Let $\fB$ be a $\zz_p$-basis of $\cL$ such that $\fB=\cup_{\tau\in \Sigma}(\fB_\tau^+\cup \fB_\tau^-)$ where, for all $\tau\in\Sigma$, 
{$\fB_\tau^{+} = \{\fb_{\tau,1},\dots,\fb_{\tau,a_{+ \tau}}\}$ is a $\zz_p$-basis of $\cL^+_\tau$ and $\fB_{\tau}^- = \{\fb_{\tau,a_{+\tau}+1}, \dots, \fb_{\tau,n}\}$ is a $\zz_p$-basis of $\cL^-_\tau$ such that} $\fB_\tau^\pm$ and $ \fB_{\tau^c}^\mp$ are dual to each other  under the Hermitian pairing on $\cL$.
We denote by $\fE = \cup_{\tau \in \Sigma} \fE_{\tau}$ (resp. $\fE^\vee$, $\fE_{e}=\fE\times \cdots \times \fE$ $e$ times, and $\fE^{\vee}_e=\fE^{\vee}\times \cdots \times \fE^{\vee}$ $e$ times)  the associated bases of ${\cL^2 = \bigoplus_{\tau \in \Sigma} \cL^+_{\tau} \otimes \cL^-_{\tau}}$ (resp. $(\cL^2)^\vee$,  $(\cL^2)^{\otimes e}$, and $((\cL^2)^{\vee})^{\otimes e}=((\cL^2)^{\otimes e})^\vee$).  Explicitly, 
	$$\fE_{\tau} = \{l^{\tau}_{i,j} := \fb_{\tau,i} \otimes \fb_{\tau,j} : 0 < i \leq a_{+\tau} < j \leq n\}.$$
Note that the pairing on $\cL$ induces a canonical isomorphism 
$\cL^2\isomto (\cL^2)^\vee$, which identifies $\fE$ with $\fE^\vee$. In the following, by abuse of notation we write  $l\mapsto l^\vee$, for both the map $\fE\to\fE^\vee$ and its inverse.

Let $\beta^*_{x,\fE^\vee}:\Witt[[t_l|l\in\fE^\vee]] \isomto \cR $ denote the Serre--Tate isomorphism associated with the choice of $x$ and $\fE^\vee$, as defined in Remark  \ref{beta-rmk}.
Recall  that, for all $l\in\fE^\vee$, we have $\beta^*_{x,\fE^\vee}(t_l)= \fq(t_x^{-1}(l))-1$,  
with $t_x=T_p(\iota_{{x}}{^\vee})^{\otimes 2}$.

\begin{prop} \label{Delta}
The notation is the same as above. 
For all $f\in\cR$  and $k\in \fE^\vee$, we have 
\[\beta_{x,\fE^\vee}^{*-1}((k\otimes \id)(\Xi (f)))=(1+t_k)\partial_{k} \beta_{x,\fE^\vee}^{*-1}(f)\in \Witt[[t_l|l\in\fE^\vee]],\]
where $\partial_{k}:=\frac{\partial}{\partial t_k}$ denotes the partial derivation with respect to the variable $t_k$.
\end{prop}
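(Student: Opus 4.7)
The plan is to compute ${\rm ks}_x^{-1}$ explicitly on the natural differential basis of $\Omega^1_{\cR/\Witt}$ arising from the Serre-Tate coordinates, and then to apply the result to $df$ using the formal-power-series expansion $d\tilde f = \sum_{k\in\fE^\vee} \partial_k(\tilde f)\,dt_k$. Since $\Delta = {\rm ks}_x^{-1}\circ d$, the proposition reduces to a single computation: showing that ${\rm ks}_x(l\otimes 1) = d\log(1+t_{l^\vee})$ for every $l\in\fE$.

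For that computation, fix $l = \fb_{\tau,i}\otimes \fb_{\tau,j}\in\fE$ with $\fb_{\tau,i}\in\cL^+_\tau$ and $\fb_{\tau,j}\in\cL^-_\tau$. By definition of $\alpha_x$ together with the identification $\omega_P = \alpha(T_p(\lambda)P)$, we have $\alpha_x^2(l\otimes 1) = \omega_{P_{\fb_{\tau,i}}}\otimes\omega_{P_{\fb_{\tau,j}}}$, where $P_\fb := T_p(\lambda)^{-1}\bigl(T_p(\iota_x^\vee)^\vee(\fb)\bigr)\in T_p A_0$. Choose the $\bZ_p$-basis $\fT = \{P_{\tau',i'} := T_p(\iota_x^\vee)^{-1}(\fb_{\tau',i'}^\vee)\}$ of $T_pA_0$. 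Applying Proposition \ref{Prop-421} gives
\[
  {\rm ks}_x(l\otimes 1) \;=\; \sum_{Q\in\fT} (Q,P_{\fb_{\tau,j}})_\lambda\, d\log \fq\bigl(Q, P_{\fb_{\tau,i}}\bigr).
\]
Under the identification of the Weil pairing with the natural pairing between $\cL$ and $\cL^\vee$, one has $(P_{\tau',i'}, P_{\fb_{\tau,j}})_\lambda = \langle \fb_{\tau',i'}^\vee, \fb_{\tau,j}\rangle = \delta_{(\tau',i'),(\tau,j)}$, so exactly one term survives.

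The surviving term is $d\log\fq(P_{\tau,j}, P_{\fb_{\tau,i}})$. The $\cO_K$-linearity of $\alpha$ and of $T_p(\iota_x^\vee)$ (noted after Proposition \ref{ST-Hodge}), together with the compatibility of $\lambda$ with complex conjugation, guarantee that $P_{\fb_{\tau,i}}$ and $P_{\tau,j}$ lie in conjugate summands $T_{\fP}A_0$ and $T_{\fP^c}A_0$, so that $\fq(P_{\tau,j},P_{\fb_{\tau,i}})$ is nonzero. Moreover, by construction of $t_x = T_p(\iota_x^\vee)^{\otimes 2}$, the element $P_{\tau,j}\otimes P_{\fb_{\tau,i}}$ is precisely $t_x^{-1}(l^\vee)$, so Remark \ref{beta-rmk} (i.e.\ $\beta_{x,\fE^\vee}^*(t_{l^\vee}) = \fq(t_x^{-1}(l^\vee))-1$) yields
\[
  {\rm ks}_x(l\otimes 1) \;=\; d\log(1 + t_{l^\vee}) \;=\; \tfrac{dt_{l^\vee}}{1+t_{l^\vee}}.
\]
Inverting this basis statement gives ${\rm ks}_x^{-1}(dt_k) = (1+t_k)\cdot(k^\vee\otimes 1)$ for every $k\in\fE^\vee$. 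Expanding $d\tilde f = \sum_{k\in\fE^\vee}\partial_k(\tilde f)\,dt_k$ and applying $(k\otimes \id)$ picks out the single term where $\langle k,k^\vee\rangle = 1$, giving $(1+t_k)\partial_k(\tilde f)$ as required.

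The main obstacle is the bookkeeping in the paragraph identifying the surviving term: one needs to pin down that $P_{\fb_{\tau,i}}\in T_{\fP_\tau}A_0$ and $P_{\tau,j}\in T_{\fP_\tau^c}A_0$ (or the reverse) so that $\fq(P_{\tau,j}, P_{\fb_{\tau,i}}) = \fq(t_x^{-1}(l^\vee))$ with the correct sign. This is a routine but notation-heavy check matching the $\cO_K$-decomposition of $T_pA_0$ with the CM-type decomposition of $\cL$, using that $T_p(\iota_x^\vee)$ and $T_p(\lambda)$ are $\cO_K$-linear and that the $\lambda$-Weil pairing intertwines $c$; once this is verified the rest is pure linear algebra together with Remark \ref{beta-rmk}.
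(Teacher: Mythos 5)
Your proposal is correct and follows essentially the same route as the paper: both reduce to the single basis identity ${\rm ks}_x(l\otimes 1)=d\log \fq(t_x^{-1}(l^\vee))=d\log(1+t_{l^\vee})$, obtained from Proposition \ref{Prop-421} together with $\beta^*_{x,\fE^\vee}(t_{l^\vee})=\fq(t_x^{-1}(l^\vee))-1$, and then extend to all $f$ by the derivation property of $d$. The only difference is cosmetic — the paper evaluates $\Delta$ directly on the coordinate functions $\beta^*_{x,\fE^\vee}(t_l)$ and leaves the dual-basis/Weil-pairing bookkeeping implicit, whereas you spell it out.
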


\begin{proof}
From the definition of $\Xi$ follows that it suffices to prove the equalities
\[\beta_{x,\fE^\vee}^{*-1}(k\otimes \id)(\Xi(\beta^*_{x,\fE^\vee}(t_l)))=(1+t_l)\delta_{l,k}\] where $\delta_{l,k}$ denotes the Kronecker symbol (i.e. $\delta_{lk}=1$ if $l=k$, and $0$ otherwise), for all $k,l\in \fE^\vee$.  {We have (using Proposition \ref{Prop-421})}
 \[\Xi(\beta^*_{x,\fE^\vee}(t_l))=(\alpha_x^2)^{-1}{\rm ks}^{-1} d( \fq(t_x^{-1}(l))-1)=(\alpha_x^2)^{-1}{\rm ks}^{-1}( d\fq(t_x^{-1}(l)))=\]
\[=(\alpha_x^2)^{-1}{\rm ks}^{-1} (\fq(t_x^{-1}(l)) d {\rm log} \fq(t_x^{-1}(l)))=
l^\vee\otimes \fq(t_x^{-1}(l)),  
\]
which implies
\begin{eqnarray*}
\beta_{x,\fE^\vee}^{*-1}(k\otimes \id)(\Xi(\beta^*_{x,\fE^\vee}(t_l))) &=& \beta_{x,\fE^\vee}^{*-1}(k\otimes \id)(l^\vee\otimes \fq(t_x^{-1}(l))) \\ &=&   \beta_{x,\fE^\vee}^{*-1}(\delta_{l,k}\fq(t_x^{-1}(l)))\\ &=& (1+t_l)\delta_{l,k}.
\end{eqnarray*}
\end{proof}

\section{Main results on $p$-adic differential operators}\label{mainresultsops-section}
In this section, we construct $p$-adic differential operators on Hida's space $V^N$ of $p$-adic automorphic forms by interpolating the differential operators defined in Section \ref{diffop-sec}.  First, using the $t$-expansion principle and Theorem \ref{density}, we prove that the $p$-adic differential operators on  the space of classical automorphic form extend uniquely to all of $V^N$ (Theorem \ref{Theta}). Secondly, we prove that $p$-adic differential operators of congruent weights are congruent (Theorem \ref{congruence}). As a corollary, we establish the existence of $p$-adic differential operators of $p$-adic weights interpolating those of classical weights (Corollary \ref{cong-coro}).

\subsection{$p$-adic differential operators of classical weights}
\label{coord-section}
In this section we prove that the differential operators 
\[D_{\kappa'}^{\kappa}:\cE_{\rho_{\kappa'}}\to\cE_{\rho_{\kappa'\cdot \kappa}},\] (where $\kappa'$ is a positive dominant weight and $\kappa$ is a sum-symmetric weight) induce differential operators $\Theta^{\chi}$ on the space of $p$-adic automorphic forms $V^N$, satisfying the property $\Theta^{\chi}\left(V^N[\chi']\right)\subset V^N[\chi'\cdot \chi]$ for all $p$-adic weights $\chi'$.

In the following, we write $\cE_\kappa$ (resp. $\cL^\kappa$, $\alpha_x^\kappa$, \ldots) in place of $\cE_{\rho_\kappa}$ (resp. $\cL^{\rho_\kappa}$, $\alpha^{\rho_\kappa}_x$, $\ldots$).  By abuse of notation, we still write $D^{\kappa}_{\kappa'}$ in place of the map on global sections \[D^{\kappa}_{\kappa'}(\Sord): H^0(\Sord,\cE_{\kappa'})\to H^0(\Sord, \cE_{\kappa'\cdot\kappa}).\]

For any weight $\kappa'$, we write $\Psi_{\kappa'}:H^0(\Sord,\cE_{\kappa'})\hookrightarrow V^N[\kappa']\subset V$ as in \eqref{psik-map}. By definition, the localization  of $\Psi_{\kappa'}$ at the point $x\in\Igusa(\Witt)$ agrees with the map {\[(\lcan^{\kappa'}\otimes {\rm id})\circ (\alpha_x^{\kappa'})^{-1}:\cE_{\kappa',\xzero}\to\cL^{\kappa'}\otimes_{\zz_p}\cR\to \cR .\]}
 where { $\lcan^{\kappa'}: \cL^{\kappa'}\to\zz_p$ is defined as in  Definition \ref{lcan-def}}, and {${\rm id}$ denotes the identity of $\cR$}.

By abuse of notation, for any sum-symmetric weight $\kappa$ of depth $e$, we still denote by
\[\tilde{\ell}_{\rm can}^{\kappa}: (\cL^2)^{\otimes e} \to \zz_p\] the map induced by $\tilde{\ell}_{\rm can}^{\kappa}:\cL^{\otimes 2e}\to  \zz_p$ as defined in Definition \ref{lcan-def} (recall that $(\cL^2)^{\otimes e}$ is a direct summand of $\cL^{\otimes 2e}$,  see also Remark \ref{sum-sym}). We write
$\tilde{\ell}_{\rm can}^{\kappa}\otimes {\rm id}:(\cL^2)^{\otimes e}\otimes_{\zz_p}\cR\to \cR$ for the associated $\cR$-linear map.

\begin{defn} \label{theta-def} 
For any positive integer $e\in\bN$, and any sum-symmetric weight $\kappa$ of depth $e$, we define  \[\theta^{\kappa}:=(\tilde{\ell}_{\rm can}^{\kappa}\otimes {\rm id}) \circ \Xi^e  :\cR\to(\cL^2)^{\otimes e}\otimes_{\zz_p} \cR\to \cR.\]
We call $\theta^{\kappa}$ the $\kappa$-differential operator on Serre--Tate expansions.
\end{defn}

We fix a point  $x\in \Igusa(\Witt)$, and write  ${\rm loc}_x:V^N\to \cR$ for the localization map at $x$ { as introduced in Section \ref{section-t-expansion}.}

\begin{lem}\label{formula-lemma}
For any weights $\kappa,\kappa'$, where $\kappa'$ is  positive dominant and $\kappa$ is sum-symmetric, and for all $f\in H^0(\Sord,\cE_{\kappa'})$, we have
\[\theta^{\kappa}({\rm loc}_x( \Psi_{\kappa'}(f)))=
{\rm loc}_x(\Psi_{\kappa'\cdot \kappa}(D_{\kappa'}^{\kappa}(f)) . \]
\end{lem}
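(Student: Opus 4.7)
The plan is to unwind both sides of the claimed equality using the local descriptions of the various objects at the point $x$, and then to verify that the two expressions match via the compatibility relations among $\lcan^{\kappa'}$, $\lcan^\kappa$, $\tilde\ell_{\rm can}^{\kappa\kappa'}$, and the projections $\pi_\kappa$, $\pi_{\kappa',\kappa}$.

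First, I would fix notation: given $f \in H^0(\Sord,\cE_{\kappa'})$, write the image $f_{x_0}$ of $f$ in the localization $\cE_{\kappa',x_0}$ as $\alpha_x^{\kappa'}(\sum_j v_j \otimes r_j)$ with $v_j \in \cL^{\kappa'}$ and $r_j \in \cR$. Then $\loc_x(\Psi_{\kappa'}(f)) = \sum_j \lcan^{\kappa'}(v_j)\, r_j$, so that
\[
\theta^\kappa(\loc_x(\Psi_{\kappa'}(f))) = \sum_{j} \lcan^{\kappa'}(v_j)\,(\tilde\ell_{\rm can}^{\kappa}\otimes\id)\bigl(\Delta^e(r_j)\bigr),
\]
where $e$ is the depth of $\kappa$. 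This is the form I want to match on the RHS.

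Next, I would apply Proposition \ref{Disd} to $\rho = \rho_{\kappa'}$, which identifies $D_{\rho_{\kappa'}}^e$, via $\alpha_x$, with $\id \otimes \Delta^e$ on $\cL^{\kappa'}\otimes_{\zz_p}\cR$. Thus $D_{\rho_{\kappa'}}^e(f)$ has image
$\sum_j v_j \otimes \Delta^e(r_j)$ under $(\alpha_x^{\rho_{\kappa'},e})^{-1}$. Since $\alpha_x^{\rho_{\kappa'},e} = \alpha_x^{\kappa'}\otimes_\cR(\alpha_x^2)^{\otimes e}$ is compatible with the Schur projection $\pi_{\rho_\kappa}$ (the projection $(\cL^2)^{\otimes e}\twoheadrightarrow \cL^\kappa$ on coefficients corresponds, via $\alpha_x$, to the projection $(\uo^2_{A/\cR})^{\otimes e}\twoheadrightarrow \uo_{A/\cR}^{\rho_\kappa}$), applying $(\id \otimes \pi_{\rho_\kappa})$ yields $D^{\rho_\kappa}_{\rho_{\kappa'}}(f)$ with local expression $\sum_j v_j\otimes \pi_\kappa^{(e)}(\Delta^e(r_j))$, where I abbreviate $\pi_\kappa^{(e)} := \pi_\kappa\otimes\id$. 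Finally, applying $\pi_{\kappa',\kappa}$ and then $\lcan^{\kappa'\kappa}\otimes\id$ gives
\[
\loc_x(\Psi_{\kappa'\kappa}(D_{\kappa'}^\kappa(f))) = \sum_{j} (\lcan^{\kappa'\kappa}\circ \pi_{\kappa',\kappa})\bigl(v_j\otimes \pi_\kappa^{(0)}(\cdot)\bigr)\cdot(\text{$\cR$-part}),
\]
which after expanding $\Delta^e(r_j) = \sum_k w_{j,k}\otimes s_{j,k}$ reads $\sum_{j,k} (\lcan^{\kappa'\kappa}\circ \pi_{\kappa',\kappa})(v_j \otimes \pi_\kappa(w_{j,k}))\, s_{j,k}$.

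To conclude, I would invoke Lemma \ref{Lemma-lcan-composition}, which gives $\lcan^{\kappa'\kappa}\circ \pi_{\kappa',\kappa} = \lcan^{\kappa'}\otimes \lcan^\kappa$, together with the definition $\tilde\ell_{\rm can}^{\kappa} = \lcan^\kappa\circ\pi_\kappa$ (Definition \ref{lcan-def}). Substituting, the RHS becomes
\[
\sum_{j,k} \lcan^{\kappa'}(v_j)\,\tilde\ell_{\rm can}^\kappa(w_{j,k})\, s_{j,k} = \sum_j \lcan^{\kappa'}(v_j)\,(\tilde\ell_{\rm can}^\kappa\otimes\id)(\Delta^e(r_j)),
\]
which is exactly the LHS computed above.

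The proof is essentially a diagram chase. The only nontrivial input beyond definitional unwinding is the passage from $D_{\rho_{\kappa'}}^e$ to $\id\otimes\Delta^e$ (Proposition \ref{Disd}) and the compatibility of $\lcan$ with the Schur-type projections (Lemma \ref{Lemma-lcan-composition}); the main obstacle is purely bookkeeping, namely to keep track of the distinction between the maps $\lcan^\kappa$ and $\tilde\ell_{\rm can}^\kappa = \lcan^\kappa\circ \pi_\kappa$ at every stage and to verify that $\alpha_x$ intertwines the Schur projections on the lattice side with those on the $\uo_{A/\cR}$ side (which follows from functoriality of the Schur functor construction).
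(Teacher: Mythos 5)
Your proposal is correct and follows essentially the same route as the paper: the paper's own proof is just a two-line invocation of Equation \eqref{eqn-lcan-composition} from Lemma \ref{Lemma-lcan-composition} (i.e. $\lcan^{\kappa'}\otimes\lcan^{\kappa}=\lcan^{\kappa'\cdot\kappa}\circ\pi_{\kappa',\kappa}$) together with Proposition \ref{Disd}, which are precisely the two nontrivial inputs you identify. Your version simply spells out the bookkeeping (the local expression of $\Psi_{\kappa'}$ via $\lcan^{\kappa'}\otimes\id$ and $\alpha_x^{\kappa'}$, the $\Witt$-linearity of $\Delta^e$, and the identity $\tilde\ell_{\rm can}^{\kappa}=\lcan^{\kappa}\circ\pi_{\kappa}$) that the paper leaves implicit.
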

\begin{proof}
{Let  $\pi_{\kappa',\kappa}: \cL^{\kappa'} \otimes \cL^{\kappa} \ra \cL^{\kappa' \cdot\kappa}$ be defined as in Lemma \ref{Lemma-lcan-composition}. By Equation \eqref{eqn-lcan-composition} we have $\lcan^{\kappa'}\otimes \lcan^{\kappa}= \lcan^{\kappa'\cdot\kappa}\circ \pi_{\kappa',\kappa}:\cL^{\kappa'}\otimes\cL^{\kappa}\to \cL^{\kappa'\cdot \kappa}\to \cR$, and the lemma then follows }from Proposition \ref{Disd}.
\end{proof}

\begin{thm}\label{indept-thm}\label{Theta}
For each sum-symmetric weight $\kappa$,
there exists a unique operator
\begin{align*}
\Theta^{\kappa}: V^N\rightarrow V^N
\end{align*}
such that $\Theta^\kappa \circ \Psi := \Psi \circ D_{\kappa'}^{ \kappa}.$

The $p$-adic $\kappa$-differential operator $\Theta^\kappa$ satisfies the  properties:
\begin{enumerate}
\item for all $f\in V^N$: ${\rm loc}_x\circ \Theta^\kappa=\theta^\kappa\circ {\rm loc}_x$,
\item for all weights $\kappa'$: $\Theta^{\kappa}(V^N[\kappa'])\subset V^N[\kappa'\cdot \kappa].$
\end{enumerate}

\end{thm}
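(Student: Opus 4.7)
The plan is to first define $\Theta^\kappa$ on the image of $\Psi$ by $\Theta^\kappa\circ\Psi_{\kappa'}:=\Psi_{\kappa'\cdot\kappa}\circ D_{\kappa'}^\kappa$, identify its action on Serre-Tate expansions with $\theta^\kappa$ via Lemma \ref{formula-lemma}, and then extend by $p$-adic continuity to all of $V^N$ using the density statement of Theorem \ref{density} together with the $t$-expansion principle of Theorem \ref{t-exp}.

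First, I would define $\Theta^\kappa$ on $\Psi\bigl(\bigoplus_{\kappa'}H^0(\Sord,\cE_{\kappa'})\bigr)$ (with $\kappa'$ ranging over positive dominant weights) via the formula above. Because each $\Psi_{\kappa'}$ is injective (Theorem \ref{density}) and the image is a direct sum of distinct weight eigenspaces, this is well defined and $\bW$-linear, and by construction it shifts weights by $\kappa$, giving property (2) on this subspace. Lemma \ref{formula-lemma} then yields $\mathrm{loc}_x\circ\Theta^\kappa=\theta^\kappa\circ\mathrm{loc}_x$ on this subspace, for every $x\in\Igusa(\bW)$ over $\xzero$. Next, I would verify $p$-adic continuity of $\theta^\kappa$ on $\mathrm{loc}_x(V^N)\subset\cR$: by Proposition \ref{Delta}, $\Delta^e$ is a $\bW$-linear combination of compositions of the operators $(1+t_k)\partial_{t_k}$, each of which preserves the integral ring $\bW[[t_l:l\in\fE^\vee]]$ and is $p$-adically continuous; the potential denominators $\prod(\kappa_i^\tau!)^{-1}$ in $\tilde\ell_{\mathrm{can}}^\kappa$ are absorbed by the generalized Young symmetrizer on $\mathrm{loc}_x(V^N)$, since integrality of the composite is already known on the dense subspace $\mathrm{loc}_x(\Psi(\bigoplus_{\kappa'}H^0(\Sord,\cE_{\kappa'})))$ and transfers to the closure.

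To extend $\Theta^\kappa$ to all of $V^N$, I would take $f\in V^N$ and use Theorem \ref{density} to find a sequence $(f_n)$ in $\Psi(\bigoplus_{\kappa'}H^0(\Sord,\cE_{\kappa'}))[1/p]\cap V^N$ converging to $f$ $p$-adically. I claim $(\Theta^\kappa f_n)$ is Cauchy in $V^N$: decomposing each $f_n$ as a finite sum of weight eigenforms, any congruence $f_n\equiv f_m\bmod p^e$ propagates by Theorem \ref{t-exp}(2)--(3) to congruences $(g\cdot f_n)_x\equiv(g\cdot f_m)_x\bmod p^e$ in $\cR$ for every $g\in T(\bZ_p)$; continuity of $\theta^\kappa$ together with $T(\bZ_p)$-equivariance of the construction (the operator $\theta^\kappa$ being intrinsic to the Gauss-Manin connection on the universal formal deformation) gives the analogous congruences for $(g\cdot\Theta^\kappa f_n)_x$; applying Theorem \ref{t-exp}(2)--(3) in the reverse direction returns $\Theta^\kappa f_n\equiv\Theta^\kappa f_m\bmod p^e$ in $V^N$. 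Define $\Theta^\kappa(f):=\lim_n\Theta^\kappa(f_n)$; the same argument applied to differences of two such sequences gives independence of the choice, as well as properties (1)--(2) and uniqueness of $\Theta^\kappa$ in the limit.

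The main obstacle is the last step: translating $p$-adic continuity of $\theta^\kappa$ on the Serre-Tate ring $\cR$ into Cauchy convergence in $V^N$. This forces us to track the entire $T(\bZ_p)$-orbit of localization data simultaneously and to apply the $t$-expansion principle in both directions in a manner compatible with the weight shift $\kappa'\mapsto\kappa'\cdot\kappa$. A related technical subtlety is that the normalization factors $\prod(\kappa_i^\tau!)^{-1}$ in Definition \ref{lcan-def} need not be $p$-integral for arbitrary weights, so integrality of $\theta^\kappa$ has to be established not on all of $\cR$ but on the image of $\mathrm{loc}_x$, using the classical case as a dense integral subset.
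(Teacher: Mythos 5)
Your proposal is correct and follows essentially the same route as the paper: define $\Theta^\kappa$ on the image of $\Psi$ via $\Psi\circ D_{\kappa'}^{\kappa}\circ\Psi^{-1}$, get uniqueness from the density statement of Theorem \ref{density}, and verify convergence of $\Theta^\kappa(f_n)$ by passing to $t$-expansions (Theorem \ref{t-exp}) and invoking Lemma \ref{formula-lemma}. Your additional care about the integrality of $\theta^\kappa$ and the $T(\bZ_p)$-orbit bookkeeping in the Cauchy argument fleshes out details the paper leaves implicit, but it is the same argument.
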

\begin{proof}
Using the fact that $\Psi$ is an injection, we first define $\Theta^\kappa$ on the image of $\Psi$ in $V$ by
\begin{align*}
\Theta^\kappa(f) := \Psi \circ D_{\kappa'}^{\kappa}\circ \Psi^{-1}(f)
\end{align*}
for each $f\in {\im(\Psi_{\kappa'})}${ for all positive dominant weights $\kappa'$.}
Since  $$ \Psi\left( \bigoplus_{\kappa \in X(T)_+, \atop \kappa \text{ positive }} H^0(\Sord,\Ekap)\right)\left[\frac{1}{p}\right] \cap V^N$$  is dense in $V^N$,
it is clear that if $\Theta^\kappa$ exists, then it is unique. In order to prove that indeed $\Theta^\kappa$ extends to {\it all} {of} $V^N$, it is sufficient to
check that  if $f_1, f_2, \ldots\in \Im(\Psi)$ converge to an element $f\in V^N$, then $\Theta^\kappa(f_1), \Theta^\kappa(f_2), \ldots$ converge in $V^N$ to $\Theta^\kappa(f) \in V^N$.

By the Serre--Tate expansion principle (Theorem \ref{t-exp}), one can check convergence after passing to $t$-expansions, in which case the statement  follows from Lemma  \ref{formula-lemma}. 
Properties (1) and (2) follow immediately from the construction.
\end{proof}

\begin{rmk}
The operators $\Theta^\kappa$ play a role analogous to the role played by Ramanujan's theta operator in the theory of modular forms and Katz's theta operator in the theory of Hilbert modular forms (see \cite[Remark (2.6.28)]{kaCM}). 
\end{rmk}

\subsection{$p$-adic differential operators of $p$-adic weights}\label{p-weights-section}
In this section we establish congruence relations for the differential operators $\Theta^\kappa$ as $\kappa$ varies. As an application we deduce {the existence of} $p$-adic differential operators $\Theta^\chi$ for $p$-adic characters $\chi$, interpolating operators of classical weights.

In the following, we fix a $\zz_p$-basis $\fB^\vee =\cup_\tau (\{\fb^\vee_{\tau,1},\dots, \fb^\vee_{\tau,a_{+\tau}}\} \cup \{\fb^\vee_{\tau,a_{+\tau}+1},\dots, \fb^\vee_{\tau,n}\})$ of $\cL^\vee$ as in Section \ref{section-basisdef},
and write $\fE^\vee$  (resp. $\fE^\vee_e$) for the associated basis of $(\cL^2)^\vee$ (resp. $((\cL^2)^{\otimes e})^\vee$).

\begin{rmk}\label{aipm}
For any sum-symmetric weight $\kappa=(\kappa_1^\tau, \hdots, \kappa_n^\tau)$, Equation \eqref{eqn-lcan-formula}, i.e. 
	\begin{equation} \label{eqn-tildelcan1}
		\tilde{\ell}_{\rm can}^{\kappa}=\prod_{\tau \in \Sigma} \prod_{i=1}^n (\kappa_i^\tau !)^{-1} \cdot \bigotimes_{\tau \in \Sigma} \bigotimes_{i=1}^{n} (\fb_{\tau,i}^\vee)^{\otimes \kappa_i^\tau} \cdot c_\kappa,
	\end{equation}
implies that $ \tilde{\ell}_{\rm can}^{\kappa}$ is a linear combination of elements of the basis $\fE_e^\vee$ of $((\cL^2)^\vee)^{\otimes e}$ with coefficients in $\{\pm 1\}$, for $e=\sum\limits_{\tau \in \Sigma} \sum\limits_{i=1}^{\siga_\tau}\kappa_i^\tau$ the depth of $\kappa$.

For all $\underline{l}=(l_1,\dots ,l_e)\in\fE^\vee_e$, we define $a_{\kappa,\underline{l}}\in \{0, \pm 1\}$ such that
$ \tilde{\ell}_{\rm can}^{\kappa}=\sum_{\underline{l}\in\fE^\vee_e} a_{\kappa,\underline{l}}\cdot \underline{l}.$
\end{rmk}

We choose a point $x\in \Igusa(\Witt)$, and write
$ \beta_{x,\fE^\vee}^{*}:\Witt[[t_l|l\in\fE^\vee]]\stackrel{\sim}{\longrightarrow}\cR$ for the Serre--Tate isomorphism at the point $x$, written with respect to the $\zz_p$-basis $\fE^\vee$ of $(\cL^2)^\vee$.

\begin{lem}\label{theta}The notation is the same as above.
For any sum-symmetric weight $\kappa$ of depth $e${, and for all $f\in \cR$, we have $\beta_{x,\fE^\vee}^{*-1}(\theta^{\kappa}(f))$ is equal to}
\[\sum_{\underline{l}=(l_1,\dots ,l_e)\in\fE^\vee_e} a_{\kappa,\underline{l}} (1+t_{l_e})\partial_{l_e}\left(\dots (1+t_{l_2})\partial_{l_2}\left((1+t_{l_1})\partial_{l_1}( \beta_{x,\fE^\vee}^{*-1}(f))\right)\dots\right) .
\]
\end{lem}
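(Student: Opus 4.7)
The statement is essentially bookkeeping on top of two inputs already established: the decomposition of $\tilde{\ell}_{\rm can}^{\kappa}$ in Remark~\ref{aipm} and the local formula for $\Delta$ in Proposition~\ref{Delta}. I will combine these by unwinding the definition $\theta^{\kappa} = (\tilde{\ell}_{\rm can}^{\kappa} \otimes \mathrm{id}) \circ \Delta^{e}$ from Definition~\ref{theta-def}, reducing the computation to a single monomial contraction, and then iterating Proposition~\ref{Delta}.

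\textbf{Step 1: Reduce to a single multi-index.} By Remark~\ref{aipm} we may write $\tilde{\ell}_{\rm can}^{\kappa} = \sum_{\underline{l}\in\fE^\vee_e} a_{\kappa,\underline{l}}\cdot \underline{l}$ with $a_{\kappa,\underline{l}} \in \{0,\pm 1\}$. Since $(\tilde{\ell}_{\rm can}^{\kappa}\otimes\mathrm{id})$ is $\cR$-linear in the first variable, it suffices to show that for each $\underline{l} = (l_1,\dots,l_e) \in \fE_e^\vee$ and every $f\in\cR$,
\[
\beta_{x,\fE^\vee}^{*-1}\bigl((\underline{l}\otimes\mathrm{id})(\Delta^e(f))\bigr)
= (1+t_{l_e})\partial_{l_e}\Bigl(\cdots (1+t_{l_2})\partial_{l_2}\bigl((1+t_{l_1})\partial_{l_1}(\beta_{x,\fE^\vee}^{*-1}(f))\bigr)\cdots\Bigr).
\]
Summing over $\underline{l}$ with the coefficients $a_{\kappa,\underline{l}}$ then gives the lemma.

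\textbf{Step 2: Induction on $e$, using the recursive definition of $\Delta^e$.} The case $e=1$ is exactly Proposition~\ref{Delta}. For the inductive step, recall that by definition $\Delta^{e} = (\mathrm{id}_{(\cL^2)^{\otimes(e-1)}} \otimes \Delta) \circ \Delta^{e-1}$. Writing $\Delta^{e-1}(f) = \sum_j \underline{b}_j \otimes r_j$ with $\underline{b}_j \in (\cL^2)^{\otimes(e-1)}$ and $r_j \in \cR$, we obtain
\[
(\underline{l}\otimes\mathrm{id})(\Delta^e(f))
= \sum_j \bigl((l_1\otimes\cdots\otimes l_{e-1})(\underline{b}_j)\bigr)\cdot (l_e\otimes\mathrm{id})(\Delta(r_j))
= (l_e\otimes\mathrm{id})(\Delta(g)),
\]
where $g := ((l_1\otimes\cdots\otimes l_{e-1})\otimes\mathrm{id})(\Delta^{e-1}(f)) \in \cR$; here I use that $\Delta$ is $\Witt$-linear, so it commutes with scalar contractions coming from $(\cL^2)^\vee$. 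By Proposition~\ref{Delta} applied once more,
\[
\beta_{x,\fE^\vee}^{*-1}\bigl((l_e\otimes\mathrm{id})(\Delta(g))\bigr) = (1+t_{l_e})\partial_{l_e}\bigl(\beta_{x,\fE^\vee}^{*-1}(g)\bigr),
\]
and by the inductive hypothesis applied to $g$ with the index $(l_1,\dots,l_{e-1})$, the right-hand side becomes the claimed iterated expression.

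\textbf{Expected obstacle.} There is no deep obstacle; the only point requiring care is verifying that $\Delta$ intertwines the contraction $(\underline{l}\otimes\mathrm{id})$ on tensor factors in the way used above, i.e.\ that contracting the first $e-1$ factors of $\Delta^e(f)$ by $l_1\otimes\cdots\otimes l_{e-1}$ and then applying $\Delta$ coincides with first passing to $g = ((l_1\otimes\cdots\otimes l_{e-1})\otimes\mathrm{id})\Delta^{e-1}(f)$ and applying $\Delta$ to $g$. This is immediate from the recursive formula for $\Delta^e$ and the fact that the contractions act only on the $(\cL^2)^{\otimes(e-1)}$-factor that sits to the left of $\Delta$, but it must be stated explicitly so that the induction closes. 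Once this identity is recorded, the argument reduces to $e$ applications of Proposition~\ref{Delta} and a final linearity expansion via Remark~\ref{aipm}.
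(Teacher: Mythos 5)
Your proposal is correct and follows the same route as the paper, whose proof is simply the one-line remark that the identity ``follows from the definitions and Proposition \ref{Delta}''; you have merely made explicit the linearity reduction via Remark \ref{aipm} and the induction on $e$ through the recursive definition of $\Delta^{e}$, with each step being one application of Proposition \ref{Delta}. The one point you flag---that contracting the first $e-1$ tensor factors commutes with the final application of $\Delta$ because those contractions land in $\bZ_p$ and $\Delta$ is $\Witt$-linear---is indeed the only detail worth recording, and you handle it correctly.
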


\begin{proof}
The equality follows from the definitions and Proposition \ref{Delta}. 
\end{proof}

\subsubsection{Congruences and actions of $p$-adic differential operators}\label{congruences-actions-section}
We now prove a 
lemma describing properties of certain differential operators $\theta^{\underline{d}}$ that are closely related to those appearing in the right-hand side of the equality in Lemma \ref{theta}.  For each $\underline{l} = (l_1,\dots,l_e) \in \fE_e^\vee$, we define
\begin{eqnarray*}
\theta_e^{\underline{l}}: \Witt[[t_l|l\in\fE^\vee]] &\rightarrow& \Witt[[t_l|l\in\fE^\vee]] \\
\beta &\mapsto& (1+t_{l_e})\partial_{l_e}\left(\dots (1+t_{l_2})\partial_{l_2}\left((1+t_{l_1})\partial_{l_1}( \beta )\right)\dots\right).
\end{eqnarray*}
For convenience, we first introduce some more notation.  For all $l^{\tau}_{i,j} \in \fE^\vee$ {as in Section \ref{section-basisdef}}, we define
	\begin{equation} \label{eq-diffopdef} \theta^{\tau}_{i,j}: \Witt[[t_l|l\in\fE^\vee]] \rightarrow \Witt[[t_l|l\in\fE^\vee]] \quad f \mapsto (1+t_{l^{\tau}_{i,j}})\frac{\partial}{\partial t_{l^{\tau}_{i,j}}} f.
	\end{equation}
Note that these operators commute, i.e. for $l^{\tau}_{i,j}$ and $l^{\tau'}_{i',j'} \in \fE^\vee$, $\theta^{\tau}_{i,j} \circ \theta^{\tau'}_{i',j'} = \theta^{\tau'}_{i',j'} \circ \theta^{\tau}_{i,j}$.

For all $\underline{d}{=(d^{\tau}_{i,j})_{l^{\tau}_{i,j}\in \fE^\vee}} \in \bZ_{\geq0}^{|\fE^\vee|}$, we define $\theta^{\underline{d}}$ as the composition of the $d^{\tau}_{i,j}$-th iterates of $\theta^{\tau}_{i,j}$ for all $l^{\tau}_{i,j} \in \fE^\vee$ (we take $(\theta^{\tau}_{i,j})^0 = \id$). By commutativity, the order does not matter.}
For each $\ul l \in \fE_e^\vee$, we define $\ul d (\ul l)$ to be the tuple of non-negative integers such that 
	\begin{equation} \label{eqn-d-of-l}
		\theta^{\ul d(\ul l)}=\theta^{\ul l}_e.
	\end{equation}

\begin{rmk}\label{poly-rmk}

Let $R$ denote the subring of polynomials in $\cR$, i.e. $R:=\Witt[t_l|l\in\fE^\vee]\subset \cR=\Witt[[t_l|l\in\fE^\vee]]$. The differential operators $\theta^{\underline d}$ on $\cR$  are continuous for the  $(t_l|l\in\fE^\vee)$-adic topology on $\cR$, and preserve the subring $R$, i.e. $\theta^{\underline d} (R)\subseteq R$. 
In particular,  congruences among the operators
$\theta^{\underline d}$ on $\cR$ can be detected by studying congruences among their restrictions to $R$.

Furthermore, for all polynomials $f(\underline{t})\in R$, if we write $f(\underline{t}) = \sum_\alpha c_\alpha (1+\underline{t})^\alpha$, 
	where $(1+\underline{t})^\alpha := \prod_\tau\prod_{i, j}(1+t_{l_{i, j}^\tau})^{\alpha_{j, i}^\tau}$ for a collection of numbers $\alpha_{j, i}^\tau\in\bZ_{\geq 0}$, then 
	\begin{equation}\label{poly}
	(\theta^{\underline{d}} f)(\underline{t}) = \sum_\alpha \phi_{\underline{d}}(\alpha) c_\alpha (1+\underline{t})^\alpha,
	\end{equation} where $\phi_{\underline{d}}$ is a polynomial (in the numbers $\alpha_{j, i}^\tau$) dependent on $\underline{d}$.  (We set $\alpha:=\left(\alpha^\tau\right)_\tau$, with $\alpha^\tau = \left(\alpha_{j, i}^\tau\right)$.) 
Therefore,  congruences among the operators
$\theta^{\underline d}$ on $R$ can be detected by studying congruences among the polynomials $\phi_{\underline d}$.
(Note that Formula \eqref{poly} does not extend to $\cR$ as in general an element of $\cR$ cannot be written as a power series in $(1+\underline{t})$.)

\end{rmk}


\begin{prop}\label{cong1}
Let $\kappa,\kappa'$ be two {symmetric} weights (as in Definition \ref{sym-def}) and let $m \geq 1$ be an integer. Assume 
$$\kappa \equiv \kappa' \mod p^m(p-1)$$ in $\bZ^{rn}.$
Additionally, if
\begin{enumerate}[(i)]
\item $\min(\kappa_{i}^\tau-\kappa_{i+1}^\tau,{\kappa'}_i^\tau-{\kappa'}_{i+1}^\tau) > m  
  \text{ for all } \tau \in \Sigma \text{ and }  1 \leq i < \siga_\tau\text{ for which }  
  {\kappa_{i}^\tau-\kappa_{i+1}^\tau\neq{\kappa'}_i^\tau-{\kappa'}_{i+1}^\tau,}
  \text{ and}$
\item $\min(\kappa_{\siga_\tau}^\tau,{\kappa'}_{\siga\tau}^\tau) > m  \text{ for all } \tau \in \Sigma \text{ for which } \kappa_{\siga_\tau}^\tau \neq {\kappa'}_{\siga_\tau}^\tau,$
\end{enumerate}
then $\theta^{\kappa} \equiv \theta^{\kappa'} \mod p^{m+1}.$
\end{prop}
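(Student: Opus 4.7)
The plan is to reduce the operator congruence to a congruence between scalar eigenvalues on a common diagonalizing basis, then use Euler's theorem together with the minimum-exponent hypotheses to verify the scalar congruences.

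\textbf{Step 1: Diagonalization.} By Lemma \ref{theta} (combined with Remark \ref{poly-rmk} and the definition in \eqref{eq-diffopdef}), $\theta^\kappa$ acts on $\cR = \Witt[[t_l\,|\,l\in\fE^\vee]]$ as the $\Witt$-linear combination
\[\theta^\kappa = \sum_{\underline{l}\in\fE_e^\vee} a_{\kappa,\underline{l}}\,\theta^{\underline{d}(\underline{l})}.\]
Each basic operator $\theta^\tau_{i,j}$ satisfies $\theta^\tau_{i,j}\bigl((1+\underline{t})^\alpha\bigr) = \alpha^\tau_{j,i}(1+\underline{t})^\alpha$, so every $\theta^{\underline{d}}$ is diagonal on the topological $\Witt$-basis $\{(1+\underline{t})^\alpha\}_\alpha$ with eigenvalue $\prod_{\tau,i,j}(\alpha^\tau_{j,i})^{d^\tau_{i,j}}$. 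Hence $\theta^\kappa$ is diagonal with eigenvalue
\[\Phi_\kappa(\alpha) := \sum_{\underline{l}\in\fE_e^\vee} a_{\kappa,\underline{l}}\prod_{\tau,i,j}(\alpha^\tau_{j,i})^{d^\tau_{i,j}(\underline{l})} \in \Witt.\]
It suffices to show $\Phi_\kappa(\alpha)\equiv\Phi_{\kappa'}(\alpha)\pmod{p^{m+1}}$ for every multi-index $\alpha$.

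\textbf{Step 2: Explicit form of $\Phi_\kappa$ via the Young symmetrizer.} Using the explicit formula \eqref{eqn-tildelcan1} for $\tilde\ell_{\rm can}^\kappa$, I would unwind the action of the generalized Young symmetrizer $c_\kappa$ on the tensor expression for $\tilde\ell_{\rm can}^\kappa$. Because $\kappa$ is symmetric (Definition \ref{sym-def}), each $\tau$-component of $c_\kappa$ can be organized as (column-antisymmetrization)$\,\circ\,$(row-symmetrization) over a Young diagram whose $i$-th row has length $\kappa^\tau_i$, with the rows above and below the split at $\siga_\tau$ being matched. The row-symmetrization absorbs the normalizing factor $\prod_{\tau,i}(\kappa^\tau_i!)^{-1}$, while the column-antisymmetrization produces an alternating sum over permutations of column entries. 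Translating this into the Serre-Tate variables $t^\tau_{i,j}$ yields a \emph{product-type} formula
\[\Phi_\kappa(\alpha) \;=\; \prod_{\tau\in\Sigma}\;F_\tau\!\bigl(\alpha;\,\kappa^\tau_1,\dots,\kappa^\tau_n\bigr),\]
where each factor $F_\tau$ further decomposes into monomials whose exponents in the variables $\alpha^\tau_{j,i}$ depend on $\kappa$ \emph{only} through the consecutive differences $\kappa^\tau_i-\kappa^\tau_{i+1}$ (for $i\neq \siga_\tau$) and through $\kappa^\tau_{\siga_\tau}$ (plus analogous data on the minus side, which matches by symmetry). This is exactly the data controlled by hypotheses (i) and (ii).

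\textbf{Step 3: Verification of the congruence.} Given the factorization in Step 2, it suffices to compare each scalar contribution $(\alpha^\tau_{j,i})^{e_\kappa}$ with $(\alpha^\tau_{j,i})^{e_{\kappa'}}$, where $e_\kappa$ is one of the exponents listed above. Two cases:
\begin{enumerate}
\item[(a)] If $\gcd(\alpha^\tau_{j,i},p)=1$, Euler's theorem gives $(\alpha^\tau_{j,i})^{p^m(p-1)}\equiv 1\pmod{p^{m+1}}$. Since $\kappa\equiv\kappa'\pmod{p^m(p-1)}$ forces $e_\kappa\equiv e_{\kappa'}\pmod{p^m(p-1)}$, we conclude $(\alpha^\tau_{j,i})^{e_\kappa}\equiv(\alpha^\tau_{j,i})^{e_{\kappa'}}\pmod{p^{m+1}}$.
\item[(b)] If $p\mid\alpha^\tau_{j,i}$, then $(\alpha^\tau_{j,i})^{e}\equiv 0\pmod{p^{m+1}}$ whenever $e>m$. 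If $e_\kappa=e_{\kappa'}$ there is nothing to prove; otherwise hypotheses (i) and (ii) are tailored precisely so that $\min(e_\kappa,e_{\kappa'})>m$, and both quantities vanish mod $p^{m+1}$.
\end{enumerate}

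\textbf{Main obstacle.} The substantive difficulty lies in Step 2, namely producing the clean factorized expression for $\Phi_\kappa(\alpha)$. This is not formal: it requires tracking exactly how $c_\kappa$ matches the ``plus'' and ``minus'' tensor slots with the pairs $(i,j)$ indexing Serre-Tate coordinates $l^\tau_{i,j}$, and showing that the resulting polynomial depends on $\kappa$ only through differences and through $\kappa^\tau_{\siga_\tau}$. Once the factorization is in hand, Step 3 is routine $p$-adic analysis. I would expect the authors to carry out Step 2 by a direct combinatorial computation on the Young tableau associated to $\kappa$, using the symmetry $\kappa^\tau_i=\kappa^\tau_{\siga_\tau+i}$ to pair up plus/minus indices row by row.
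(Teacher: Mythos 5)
Your skeleton coincides with the paper's: both arguments diagonalize $\theta^\kappa$ on the monomials $(1+\underline{t})^\alpha$ (Remark \ref{poly-rmk}), identify the eigenvalue as a polynomial in $\alpha$ controlled by the generalized Young symmetrizer, and finish with Euler's theorem in the unit case and hypotheses (i)--(ii) in the $p$-divisible case. The problem is that your Step 2 --- which you correctly single out as the substantive content --- is only described, not carried out, and the shape you announce for its output is wrong, which then infects Step 3. The eigenvalue $\Phi_\kappa(\alpha)$ does \emph{not} factor into powers of the individual entries $\alpha^\tau_{j,i}$: it factors into powers of the quantities $i!\cdot m^\tau_i(\alpha)$, where $m^\tau_i(\alpha)$ is the determinant of a leading $i\times i$ minor of $\alpha^\tau$ (this is Corollary \ref{cordef}, which the paper extracts from this very proof; compare Remark \ref{counter}, where the eigenvalue of $\theta^{\lambda'}$ is $\alpha_{3,1}\alpha_{4,2}-\alpha_{4,1}\alpha_{3,2}$, visibly not a monomial). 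So your Step 3, which compares $(\alpha^\tau_{j,i})^{e_\kappa}$ with $(\alpha^\tau_{j,i})^{e_{\kappa'}}$, is applied to the wrong base. The two-case analysis survives verbatim with the base replaced by $i!\cdot m^\tau_i(\alpha)$, but only once the determinant factorization has actually been established --- which is precisely the step you defer.

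The paper closes this gap without proving the full factorization for arbitrary $\kappa$, by a reduction you could adopt: it suffices to treat $\kappa'=\kappa\cdot\left(\eps^\tau_1\eps^\tau_{\siga_\tau+1}\cdots\eps^\tau_i\eps^\tau_{\siga_\tau+i}\right)^{p^m(p-1)}$ for a single $\tau$ and $i$. Tracking the generalized Young symmetrizer for this one elementary modification shows that $\theta^{\kappa'}$ is obtained from $\theta^\kappa$ by composing with the $p^m(p-1)$-th power of the operator $i!\cdot\sum_{\sigma\in\Symm_i}(-1)^{\sgn(\sigma)}\prod_{j=1}^i\theta^\tau_{j,\siga_\tau+\sigma(j)}$, whose eigenvalue on $(1+\underline{t})^\alpha$ is exactly $i!\cdot m^\tau_i(\alpha)$. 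If that quantity is a $p$-adic unit, its $p^m(p-1)$-th power is $1$ bmod $p^{m+1}$ (the hypothesis $p>n$ guarantees $i!$ is a unit); if it is divisible by $p$, hypotheses (i) and (ii) force the exponent of that same factor inside $\phi_\kappa(\alpha)$ and $\phi_{\kappa'}(\alpha)$ to exceed $m$, so both eigenvalues vanish mod $p^{m+1}$. In short: right strategy and right endgame, but the combinatorial computation on which everything rests is missing, and its conclusion is misstated in a way that would make your Step 3, as written, fail.
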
 

\begin{proof}
	 By Lemma \ref{theta} and Equation \eqref{eqn-d-of-l}, we obtain 
	\begin{equation} \label{eqn-tildelcan} \beta_{x,\fE^\vee}^{*-1}\circ \theta^{\kappa} \circ \beta_{x,\fE^\vee}^{*}= \sum_{\underline{l}\in\fE^\vee_e} a_{\kappa,\underline{l}}\cdot \theta^{\ul d(\underline{l})} \, .  \end{equation}
	
	We assume without loss of generality that 
	$$\kappa'=\kappa \cdot \left(\eps^\tau_1 \eps^\tau_{\siga_\tau+1} \eps^\tau_2 \eps^\tau_{\siga_\tau+2}  ... \eps^\tau_{i}\eps^\tau_{\siga_\tau+i}\right)^{p^m(p-1)}$$
	 for some $\tau \in \Sigma$. (Recall that $\varepsilon^\tau_j$ denotes the character of $T(\zz_p)$ given by $\eps^{\tau}_j(\diag(\gamma^{\sigma}_{1,1}, \hdots, \gamma^{\sigma}_{n,n})_{\sigma \in \arch}) = \gamma^\tau_{j,j}$,
	i.e. such that $\kappa^{\tau}=\prod_j(\eps^{\tau}_j)^{\kappa^{\tau}_j}.$) 
	By combining Equations \eqref{eqn-tildelcan} and \eqref{eqn-tildelcan1}, and analyzing the action of the generalized Young symmetrizer, 
	 we obtain that
	 
\begin{align}
\beta_{x,\fE^\vee}^{*-1}\circ \theta^{\kappa'} \circ \beta_{x,\fE^\vee}^{*} &= \sum_{\underline{l}\in\fE^\vee_e}  a_{\kappa,\underline{l}} \theta^{\ul d(\underline{l})}\left(i!\cdot \left(\sum_{\sigma \in \Symm_i} (-1)^{\sgn(\sigma)}\cdot\prod_{j=1}^i \theta^\tau_{j,\siga_\tau+\sigma(j)}\right)\right)^{p^m(p-1)} \label{eqn-determinant} \\
&\equiv \sum_{\underline{l}\in\fE^\vee_e}  a_{\kappa,\underline{l}} \theta^{\ul d(\underline{l})}\label{eqn-determinant-secondline} \\
&=\beta_{x,\fE^\vee}^{*-1}\circ \theta^{\kappa} \circ \beta_{x,\fE^\vee}^{*}  \mod p^{m+1},\nonumber 
\end{align}
where congruence \eqref{eqn-determinant-secondline} follows from the following observation.
   
In the notation of Remark \ref{poly-rmk}, 
\begin{align*}
\left(i!\cdot \left(\sum_{\sigma \in \Symm_i} (-1)^{\sgn(\sigma)}\cdot\prod_{j=1}^i \theta^\tau_{j,\siga_\tau+\sigma(j)}\right)\right)^{p^m(p-1)}
\end{align*}
 has the effect of multiplying each polynomial $\phi_{\underline{d}(\underline{l})}(\alpha)$ by 
 \[
   \left(i!\cdot \left(\sum_{\sigma \in \Symm_i} (-1)^{\sgn(\sigma)}\cdot\prod_{j=1}^i \alpha^\tau_{\siga_\tau+\sigma(j), j}\right)\right),^{p^m(p-1)}\]
which is congruent to 
\begin{align*}\equiv
      \left\{ \begin{array}{ll}    1\mod p^{m+1} & \text{ if each $\alpha^\tau_{\siga_\tau+\sigma(j), j}$ is relatively prime to $p$} \\      	
      	0\mod p^m & \text{ otherwise} ,
     \end{array} \right.
 \end{align*}
because  $p > n$ (though note that $p > \max_{\tau \in \Sigma}\{\min(\siga_\tau,\sigb_\tau)\}$ is enough).  
 If some $\alpha^\tau_{\siga_\tau+\sigma(j), j}$ is divisible by $p$, then $\phi_{\underline{d}(\underline{l})}(\alpha)\equiv 0\mod p^{m+1}$ by assumptions (i) and (ii).
 
Hence, we conclude that $\theta^{\kappa} \equiv \theta^{\kappa'} \mod p^{m+1}$ for all {symmetric} weights $\kappa$ and $\kappa'$ satisfying the above hypotheses.
\end{proof}

\begin{rmk}
	Note that if $\kappa$ is sum-symmetric, but not symmetric, then $\theta^{\kappa}=0$. This follows by combining Equation \eqref{eqn-tildelcan} and \ref{eqn-tildelcan1} together with an analysis of the action of the generalized Young symmetrizer $c_\kappa$.
\end{rmk}

\begin{thm}\label{congruence}
Let $\kappa,\kappa'$ be two symmetric weights and $m \geq 1$ be an integer. Assume 
$$\kappa \equiv \kappa' \mod p^m(p-1)$$ in $\bZ^{rn}.$
Additionally, if
\begin{itemize}
\item $\min(\kappa_{i}^\tau-\kappa_{i+1}^\tau,{\kappa'}_i^\tau-{\kappa'}_{i+1}^\tau) > m  
  \text{ for all } \tau \in \Sigma \text{ and }  1 \leq i < \siga_\tau\text{ for which }  {\kappa_{i}^\tau-\kappa_{i+1}^\tau\neq{\kappa'}_i^\tau-{\kappa'}_{i+1}^\tau,}
  \text{ and}$
  \item $\min(\kappa_{\siga_\tau}^\tau,{\kappa'}_{\siga\tau}^\tau) > m  \text{ for all } \tau \in \Sigma \text{ for which } \kappa_{\siga_\tau}^\tau \neq {\kappa'}_{\siga_\tau}^\tau,$
\end{itemize}
then $\Theta^{\kappa} \equiv \Theta^{\kappa'} \mod p^{m+1}.$
\end{thm}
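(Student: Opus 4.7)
The strategy is to globalize Proposition \ref{cong1} (congruence of the local differential operators $\theta^{\kappa}$ on Serre-Tate expansions) to a congruence of the $p$-adic operators $\Theta^{\kappa}$ on all of $V^N$, by combining Theorem \ref{Theta}, the Serre-Tate expansion principle (Theorem \ref{t-exp}), and the density result of Theorem \ref{density}.

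First, I would reduce to classical weight forms. By Theorem \ref{density}, the subspace $\Psi\!\left(\bigoplus_{\chi \text{ positive dominant}} H^0(\Sord, \cE_{\chi})\right)[\tfrac{1}{p}] \cap V^N$ is $p$-adically dense in $V^N$. Both $\Theta^{\kappa}$ and $\Theta^{\kappa'}$ were constructed in Theorem \ref{Theta} by continuous extension from precisely this subspace, hence are continuous for the $p$-adic topology on $V^N$. Therefore it suffices to check
\[\Theta^{\kappa}\Psi_{\chi}(f) \;\equiv\; \Theta^{\kappa'}\Psi_{\chi}(f) \pmod{p^{m+1}}\]
for every positive dominant weight $\chi$ and every $f \in H^0(\Sord, \cE_{\chi})$.

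For such $f$, the two sides
\[F := \Theta^{\kappa}\Psi_{\chi}(f) = \Psi_{\chi\kappa}(D_{\chi}^{\kappa}f), \qquad F' := \Theta^{\kappa'}\Psi_{\chi}(f) = \Psi_{\chi\kappa'}(D_{\chi}^{\kappa'}f)\]
lie in the distinct weight spaces $V^N[\chi\kappa]$ and $V^N[\chi\kappa']$, so the correct comparison tool is Theorem \ref{t-exp}(3): $F \equiv F' \pmod{p^{m+1}}$ if and only if, for every $g \in T(\bZ_p)$,
\[(\chi\kappa)(g)\cdot F_x(t) \;\equiv\; (\chi\kappa')(g)\cdot F'_x(t) \pmod{p^{m+1}}.\]
By property (1) of Theorem \ref{Theta}, the $t$-expansions satisfy $F_x(t) = \theta^{\kappa}(\Psi_{\chi}(f)_x(t))$ and $F'_x(t) = \theta^{\kappa'}(\Psi_{\chi}(f)_x(t))$. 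Two ingredients now close the argument: (i) the standard $p$-adic fact that $a \equiv b \pmod{p^m(p-1)}$ implies $u^a \equiv u^b \pmod{p^{m+1}}$ for all $u \in \bZ_p^\times$, applied coordinatewise to the torus $T(\bZ_p)$, gives $\kappa(g) \equiv \kappa'(g) \pmod{p^{m+1}}$; (ii) Proposition \ref{cong1} gives $\theta^{\kappa} \equiv \theta^{\kappa'} \pmod{p^{m+1}}$ on $\cR$. Multiplying these two congruences yields the desired relation on $t$-expansions, hence the congruence for $F$ and $F'$, and therefore the theorem.

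There is no serious obstacle at this stage: the heavy combinatorial work on the generalized Young symmetrizer has already been carried out in Proposition \ref{cong1}, and the present statement is a purely formal globalization. The only subtle point to keep straight is that $F$ and $F'$ belong to \emph{different} weight spaces, which forces the use of Theorem \ref{t-exp}(3) (with its character-twisted expansions) rather than a naive identity of $t$-expansions, and explains why both the twist congruence $\kappa(g)\equiv\kappa'(g)$ and the operator congruence $\theta^\kappa\equiv\theta^{\kappa'}$ are needed simultaneously.
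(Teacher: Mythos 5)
Your proposal is correct and follows essentially the same route as the paper: the paper's own proof is a two-line reduction via the Serre--Tate expansion principle (Theorem \ref{t-exp}) and Property (1) of Theorem \ref{Theta} to the local congruence $\theta^{\kappa}\equiv\theta^{\kappa'} \bmod p^{m+1}$ of Proposition \ref{cong1}. You merely spell out the details the paper leaves implicit --- the density reduction to classical weights and the character-twist congruence $\kappa(g)\equiv\kappa'(g) \bmod p^{m+1}$ needed because the two images lie in different weight eigenspaces --- which is a faithful (and slightly more careful) rendering of the same argument.
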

\begin{proof}
By the $p$-adic Serre--Tate expansion principle (Theorem \ref{t-exp}), combined with Property (1) in Theorem \ref{Theta}, $\Theta^\kappa\equiv \Theta^{\kappa'} \mod p^{m+1}$ if and only if $\theta^\kappa\equiv \theta^{\kappa'} \mod p^{m+1}$. 
Then the statement follows from Proposition \ref{cong1}.
\end{proof}

\begin{defi} We define a \textit{(symmetric) $p$-adic character} to be a continuous group homomorphism $T(\bZ_p) \ra \bZ_p^*$ that arises as the $p$-adic limit of the $\bZ_p$-points of characters corresponding to (symmetric) weights.
\end{defi}

	Proposition \ref{cong1} enables us to define by interpolation the differential operators $\theta^\chi$  on $\cR$ for all symmetric $p$-adic characters $\chi$. Note that in order to define $\theta^\chi$,  one should only take
the limit over $\theta^\kappa$  for weights $\kappa$ that satisfy
conditions (i) and (ii) of Proposition \ref{cong1} (without loss of generality, one may also choose the weights $\kappa$ such that $|\kappa|_\infty\rightarrow \infty$). These conditions can always be
achieved by modifying the characters converging to $\chi$ if necessary.

The following result is then an immediate consequence of Theorem \ref{congruence}.

\begin{cor}\label{cong-coro} 
For all {symmetric} $p$-adic characters $\chi$, 
there exist  $p$-adic differential operators
\begin{align*}
\Theta^{\chi}: V^N\rightarrow V^N
\end{align*}
interpolating the $p$-adic $\kappa_i$-differential operators previously defined for classical weights $\kappa_i$, and satisfying the following proerties:
\begin{enumerate}
\item for all $f\in V^N$: ${\rm loc}_x\circ \Theta^\chi(f)=\theta^\chi\circ {\rm loc}_x(f)$,
\item {for all $p$-adic characters $\chi'$: $\Theta^{\chi}(V^N[\chi'])\subset V^N[\chi'\cdot \chi].$}
\end{enumerate}
\end{cor}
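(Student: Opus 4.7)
\textbf{Proof plan for Corollary \ref{cong-coro}.} The plan is to construct $\Theta^\chi$ as a $p$-adic limit of the classical operators $\Theta^{\kappa_n}$ already constructed in Theorem \ref{Theta}, where $\{\kappa_n\}$ is a sequence of symmetric classical weights converging $p$-adically to $\chi$. By the very definition given just before the corollary, such a sequence $\{\kappa_n\}$ exists; moreover, by replacing $\kappa_n$ with $\kappa_n \cdot \mu_n$ for a suitable positive symmetric auxiliary weight $\mu_n$ whose components are highly divisible by $p$ and $p-1$, I may assume without loss of generality that the gaps $\kappa_{n,i}^\tau - \kappa_{n,i+1}^\tau$ and the bottom entries $\kappa_{n,\siga_\tau}^\tau$ all tend to $\infty$ as $n \to \infty$, while still having $\kappa_n \to \chi$. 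Thus for each $m \geq 1$, there exists $N(m)$ such that whenever $n, n' \geq N(m)$, we have $\kappa_n \equiv \kappa_{n'} \mod p^m(p-1)$ and both the gap condition (i) and the bottom-entry condition (ii) of Theorem \ref{congruence} hold automatically.

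First, I would apply Theorem \ref{congruence} to conclude that $\Theta^{\kappa_n} \equiv \Theta^{\kappa_{n'}} \mod p^{m+1}$ for all $n, n' \geq N(m)$. Since $V^N$ is $p$-adically complete (as an inverse limit over $m$ of the $V_{\infty, m}$), the sequence $\{\Theta^{\kappa_n}(f)\}$ is Cauchy in $V^N$ for every $f \in V^N$, and I define $\Theta^\chi(f) := \lim_{n \to \infty} \Theta^{\kappa_n}(f)$. Linearity of $\Theta^\chi$ follows from linearity of each $\Theta^{\kappa_n}$ and continuity of addition. Well-definedness (independence of the choice of approximating sequence) is immediate from Theorem \ref{congruence}: any two admissible sequences can be interleaved into a single Cauchy sequence producing the same limit. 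An entirely parallel argument on the local ring $\cR$, applying Proposition \ref{cong1} in place of Theorem \ref{congruence}, yields a well-defined operator $\theta^\chi = \lim_n \theta^{\kappa_n}$ on $\cR$ by interpolation.

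Finally, I would verify the two listed properties by passage to the limit. For property (1), the localization map ${\rm loc}_x : V^N \to \cR$ is continuous for the $p$-adic topology, and Theorem \ref{Theta}(1) gives ${\rm loc}_x \circ \Theta^{\kappa_n} = \theta^{\kappa_n} \circ {\rm loc}_x$ for each $n$; taking limits on both sides yields ${\rm loc}_x \circ \Theta^\chi = \theta^\chi \circ {\rm loc}_x$. For property (2), recall that the $T(\bZ_p)$-action on $V^N$ is $p$-adically continuous. If $f \in V^N[\chi']$, then $t \cdot \Theta^{\kappa_n}(f) = \kappa_n(t) \chi'(t) \Theta^{\kappa_n}(f)$ by Theorem \ref{Theta}(2); taking the limit as $n \to \infty$ and using continuity of $t \mapsto t \cdot (-)$ and of the character evaluation gives $t \cdot \Theta^\chi(f) = \chi(t) \chi'(t) \Theta^\chi(f)$, i.e.\ $\Theta^\chi(f) \in V^N[\chi' \cdot \chi]$.

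The only genuinely nontrivial step is the first: arranging that the approximating sequence of symmetric classical weights satisfies the technical hypotheses (i) and (ii) of Theorem \ref{congruence}. This is where the observation about twisting by auxiliary highly positive symmetric weights $\mu_n$ is used, and this is the step I expect to require the most care — everything else is a routine passage to the limit in a $p$-adically complete setting.
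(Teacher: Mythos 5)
Your proposal is correct and matches the paper's (much terser) argument: the paper likewise defines $\theta^\chi$ on $\cR$ by interpolation via Proposition \ref{cong1} and then declares the corollary an immediate consequence of Theorem \ref{congruence}, leaving exactly the limit/Cauchy details you spell out implicit. One small imprecision in your reduction: at a place $\tau$ with $\siga_\tau > \sigb_\tau$ the symmetry condition forces $\kappa_{\siga_\tau}^\tau = 0$ for every symmetric weight, so you cannot make the bottom entries tend to $\infty$ there by twisting --- but condition (ii) of Theorem \ref{congruence} is then vacuous for such $\tau$ (the entries always agree), so your argument still goes through.
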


\subsubsection{Polynomials $\phi_\kappa$}  The remainder of this section introduces some notation and results needed in Section \ref{appli}.

\begin{defi}\label{phikap-action}
	For each  sum-symmetric weight $\kappa$, there is a unique polynomial $\phi_\kappa$ with integer coefficients such that for all polynomials $f(\underline{t})\in \cR=\Witt[[\underline{t}]]$, if we write $f(\underline{t}) = \sum_\alpha c_\alpha (1+\underline{t})^\alpha$ in the notation of Remark \ref{poly-rmk},
		then 
	$$(\theta^\kappa f)(\underline{t}) = \sum_\alpha \phi_\kappa(\alpha) c_\alpha (1+\underline{t})^\alpha.$$ 
	\end{defi}

From the description of the action of the differential operators described in Equation \eqref{eqn-determinant} together with Equation \eqref{eq-diffopdef}, we deduce the following corollary of the proof of Proposition \ref{cong1}.

	\begin{cor} \label{cordef}
	Let $\kappa$ be a sum-symmetric weight. Then 
	$$\phi_\kappa(\alpha)  
		 =\prod_{\tau\in\Sigma}\left(\left(\siga_\tau !\cdot m^\tau_{\siga_\tau}(\alpha)\right)^{\kappa_{\siga_\tau}^{\tau}}\prod_{i=1}^{\siga_\tau-1}\left(i!\cdot m^\tau_i(\alpha)\right)^{\kappa_{i}^{\tau}-\kappa_{i+1}^{\tau}}\right)$$  where $m^\tau_i(\alpha)$ is (a determinant of) an $i \times i$ minor of the matrix $\alpha$, for each $i$, $1\leq i\leq \siga_\tau$, and $\tau\in \Sigma$.
	 \end{cor}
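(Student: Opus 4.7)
The plan is to extract the formula for $\phi_\kappa$ from a refinement of the computation performed in the proof of Proposition \ref{cong1}, specifically Equation \eqref{eqn-determinant}, promoted from a congruence modulo a prime power to an exact identity of differential operators on $\cR$. First I would restrict to the case where $\kappa$ is symmetric: for sum-symmetric but non-symmetric $\kappa$, the remark following Proposition \ref{cong1} gives $\theta^\kappa = 0$ and hence $\phi_\kappa \equiv 0$, while a case-by-case inspection of the possible column-height mismatches between $\kappa^{\tau+}$ and $\kappa^{\tau-}$ shows that the right-hand side of the claimed formula also vanishes.

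For symmetric $\kappa$, set $\chi_i^\tau := \prod_{j=1}^i \eps_j^\tau \eps_{\siga_\tau + j}^\tau$ for each $\tau \in \Sigma$ and $1 \leq i \leq \siga_\tau$. This is the symmetric weight of depth $i$ whose $\pm$-partitions are both $(1^i)$, and every symmetric weight factors uniquely as
\[
\kappa = \prod_{\tau \in \Sigma}\Bigl((\chi_{\siga_\tau}^\tau)^{\kappa_{\siga_\tau}^\tau}\prod_{i=1}^{\siga_\tau-1}(\chi_i^\tau)^{\kappa_i^\tau - \kappa_{i+1}^\tau}\Bigr).
\]
The central step is to establish the exact identity
\[
\theta^{\kappa \cdot (\chi_i^\tau)^N} = \theta^\kappa \circ \Bigl(i!\cdot\det\bigl(\theta^\tau_{j, \siga_\tau + k}\bigr)_{j, k = 1}^i\Bigr)^N
\]
as operators on $\cR$, valid for any weight $\kappa$ and any integer $N \geq 0$. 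This is the combinatorial content of Equation \eqref{eqn-determinant} with $N$ in place of $p^m(p-1)$: the Young-symmetrizer expansion used in the proof of Proposition \ref{cong1} identifies coefficients independently of any $p$-adic reduction, and all the operators $\theta^\tau_{j, k}$ that appear commute because they act on distinct Serre-Tate variables.

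Iterating this identity via the factorization of $\kappa$ yields
\[
\theta^\kappa = \prod_{\tau \in \Sigma}\Bigl(\bigl(\siga_\tau!\cdot\det\nolimits_{\siga_\tau}^\tau\bigr)^{\kappa_{\siga_\tau}^\tau}\prod_{i=1}^{\siga_\tau-1}\bigl(i!\cdot\det\nolimits_i^\tau\bigr)^{\kappa_i^\tau - \kappa_{i+1}^\tau}\Bigr)
\]
as operators on $\cR$, where $\det_i^\tau := \det(\theta^\tau_{j, \siga_\tau + k})_{j, k = 1}^i$. Evaluating both sides on the monomial $(1 + \ul t)^\alpha$ and using that $\theta^\tau_{j, k}$ acts as multiplication by the scalar $\alpha^\tau_{k, j}$ (by Equation \eqref{eq-diffopdef} and Remark \ref{poly-rmk}), the operator $\det_i^\tau$ becomes the scalar $\det(\alpha^\tau_{\siga_\tau + k, j})_{j, k = 1}^i = m_i^\tau(\alpha)$, giving the formula for $\phi_\kappa(\alpha)$ claimed in the corollary.

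The main obstacle will be the combinatorial verification that the factorial coefficient accompanying each copy of the determinantal operator in the key identity is exactly $i!$, rather than $1$, $(i!)^{-1}$, or $(i!)^2$. This requires careful bookkeeping of three competing sources of factorials: the $\prod_j(\kappa_j^\tau!)^{-1}$ normalization built into Definition \ref{lcan-def}; the two length-$i$ column antisymmetrizations (one on the ``$+$'' side and one on the ``$-$'' side), each producing $i!$ signed terms; and the ordering ambiguity when the tensor in the ``$(++\cdots+)(--\cdots-)$''-grouped form of Equation \eqref{eqn-tildelcan1} is re-expressed in the $\fE^\vee$-basis of $((\cL^2)^{\otimes i})^\vee$. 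This check is essentially the analysis of the generalized Young symmetrizer implicit in the proof of Proposition \ref{cong1}; once it is carried out for the elementary $\chi_i^\tau$, the general formula follows by iteration.
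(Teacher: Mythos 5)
Your proposal takes essentially the same route as the paper: the corollary is deduced there directly from Equation \eqref{eqn-determinant} in the proof of Proposition \ref{cong1} --- which is precisely your central identity with $N=p^m(p-1)$, and whose Young-symmetrizer derivation is indeed independent of the exponent --- together with Equation \eqref{eq-diffopdef}, and your factorization of a symmetric weight into the elementary weights $\chi^\tau_i$ is exactly the iteration that this citation implicitly requires. The one inaccurate point is your disposal of the sum-symmetric-but-not-symmetric case: the right-hand side of the claimed formula is a product of nonnegative powers of the minors $m^\tau_i(\alpha)$ and so does not vanish identically, whereas $\theta^\kappa=0$ forces $\phi_\kappa=0$ there; the corollary should really be read (as it is used later, e.g.\ in Definition \ref{cordef1}) as a statement about symmetric weights, so this does not affect the substance of your argument.
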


	 \begin{rmk}\label{cordef2}
	 Let $\kappa$ and $\kappa'$ be two sum-symmetric weights satisfying the conditions of Proposition \ref{cong1}. Then $$\phi_\kappa(\alpha) \equiv \phi_{\kappa'}(\alpha) \mod p^{m+1}. $$
\end{rmk} 

We extend the definition of the polynomials $\phi_\kappa$ as follows. We write $\cO_{\bC_p}$ for the ring of integers of $\bC_p$,  the completion of an algebraic closure of $\bQ_p$.

\begin{defi}\label{cordef1} Let $\zeta:T(\zz_p)\ra \cO_{\bC_p}^*$ be any continuous group homomorphism. We write $\zeta=\prod_{\tau\in\Sigma}\left(\prod_{i=1}^n\zeta^\tau_i\cdot \varepsilon_i^\tau\right)$, where the $\zeta_{i}^{\tau}$ are continuous group homomorphisms $\bZ_p^*\ra \cO_{\bC_p}^*$ (possibly including finite order characters). We define
\begin{align}\label{phikap-rmk}
\phi_{\zeta}(\alpha) :=\prod_{\tau\in\Sigma}\left(\left(\zeta^\tau_{\siga_\tau}\left(\siga_\tau!\cdot m^\tau_{\siga_\tau}(\alpha)\right)\right)\prod_{i=1}^{\siga_\tau-1}\left(\zeta_i^\tau\cdot\left(\zeta_{i+1}^{\tau}\right)^{-1}\right)\left(i!\cdot m^\tau_i(\alpha)\right)\right),
\end{align}
where the $m^\tau_i(\alpha)$ are as in Corollary \ref{cordef}.
\end{defi}

 \begin{rmk}
	 It follows from the defintion that, if $\zeta,\zeta':T(\bZ_p)\ra  \cO_{\bC_p}^*$ are two continuous group homomorphisms satisfying $\zeta\equiv \zeta' \mod p^{m+1}$, then $\phi_\zeta(\alpha) \equiv \phi_{\zeta'}(\alpha) \mod p^{m+1}. $
\end{rmk}

\section{Pullbacks}\label{pullbacks-section}
In this section, we discuss the composition of the differential operators with pullbacks to a smaller group.  This construction is similar to the one in \cite[Section 3]{emeasurenondefinite}.  We further describe the action in terms of Serre--Tate coordinates (which are absent from \cite{emeasurenondefinite}), and then we obtain formulas (in terms of Serre--Tate coordinates) in the case of {\it all} signatures.  (Using $q$-expansions, \cite{emeasurenondefinite} had only obtained formulas when the signature at each archimedean place was one of just two possibilities.)  This section builds on \cite[Section 6]{CEFMV}, which provides details about pullbacks of automorphic forms in terms of Serre--Tate coordinates.

\subsection{Pullback and restriction of automorphic forms} \label{section-G-prime}

We start by introducing the required notation.  
Let $L=\oplus_{i=1}^s W_i$ be a self-dual $\cO_K$-linear decomposition of the free $\bZ$-module $L$. {For each $i$, $1\leq i\leq s$}, we denote by ${\langle, \rangle}_i$ the pairing on $W_i$ induced by $\langle,\rangle$ on $L$, and define
$GU_i=GU(W_i,\langle, \rangle_i)$, a unitary group of signature
$\left(\siga_{\tau}^{(i)},\sigb_{\tau}^{(i)}\right)_{\tau \in \Sigma_K}$.  
We write $\nu_i:GU_i\to\bG_m$ for the similitude factor.
 Note that the signatures $\left(\siga_{\tau}^{(i)},\sigb_{\tau}^{(i)}\right)_{i=1,\dots ,s}$ form a partition of the signature $\left(\siga_{\tau},\sigb_{\tau}\right)$. We define $G':=\nu_0^{-1}(\bG_m)\subset \prod_i GU_i$, where $\nu_0:=\prod_i\nu_i$, and $\bG_m\subset \bG_m^s$ is embedded diagonally.
Then, 
there is a canonical injective homomorphism $G'\hookrightarrow GU$, which induces 
a map $\phi$ between the 
associated moduli spaces, $\phi: \cM'\to\cM$. 
   Let $\cS'$ be a connected component of $\cM'{_\Witt}$.  We can identify $\cS'$ with the cartesian product of connected components of the smaller unitary Shimura varieties. 
We write $\cS$ for the unique connected component of $\cM_\Witt$ containing the image of $\cS'$, and we still denote by $\phi:\cS'\to\cS$ the restriction of $\phi.$  

We assume the prime $p$ splits completely over each of the reflex fields $E_i$, $i=1,\dots ,s$, associated with the smaller Shimura varieties, and let
{$\phi: \Sordprime\rightarrow\Sord$ also denote the restriction of $\phi$ to the ordinary loci.}
We denote respectively by $\Igusa '$, $\Ig$ the Igusa towers over $\Sordprime$, $\Sord$, and define 
$$H'=\prod\limits_{\tau \in \arch, 1 \leq i \leq s} \GL_{\siga_{\tilde\tau}^{(i)}} \times \GL_{\sigb_{\tilde\tau}^{(i)}}.$$
{We also write $H'=\prod_{1\leq i\leq s} H_i$, where $H_i= \prod_{\tau\in\Sigma} \GL_{\siga_{\tilde\tau}^{(i)}} \times \GL_{\sigb_{\tilde\tau}^{(i)}}$, for all $i=1, \dots ,s$.} The algebraic group $H'$  can be identified over $\bZ_p$ with a Levi subgroup of  $G' \cap U$.  Thus we have a closed immersion $\Levin' \ra \Levin$ {arising from} the inclusion of $G'$ into $GU$ and the identification over $\bZ_p$ of $H$ with a Levi subgroup of $U$.  This allows us (by choosing without loss of generality a suitable basis) to identify the maximal torus $T$ of $\Levin$ with a maximal torus $T'$ in $\Levin'$.  In the following, we denote by $X(T')_+$ the set of }the weights in $X(T') = X(T)$ that are dominant with respect to {the roots of $\Delta$ that belong to the root system of $H'$.} We also write ${V'}^{N'}$ for the space of $p$-adic automorphic forms on $H'$.

\subsubsection{Pullbacks}\label{epsilon}
In \cite[Proposition{~6.2}]{CEFMV} we observed that the map $\phi:\cS'{^{\rm{ord}}}\to\cS{^{\rm{ord}}}$ lifts canonically to a map between the Igusa covers, $\Phi:\Igusa '\to\Ig$.  As a consequence, we are able to explicitly {describe the pullback $\phi^*: V^N \ra {V'}^{N'}$} in the Serre--Tate coordinates associated with a point $x\in\Igusa '(\Witt)$ (and $\Phi(x)\in\Ig (\Witt)$).  To recall the result we first establish some notation.
For each $\tau\in \Sigma$, we denote by $\cL_\tau^\pm=\oplus_{i=1}^s\fL_{i,\tau}^\pm$ the associated $\zz_p$-linear decompositions of the modules $\cL_\tau^\pm$ ({arising from} the signature partition). 
We define $\fL^2:=\oplus_i\fL_i^2$, where for each $i=1,\dots ,s$
\[\fL^2_i:=\bigoplus_{\tau\in\Sigma}(\fL^+_{i,\tau}\otimes_{\zz_p} \fL^-_{i,\tau}).\]
In the following we denote by $\epsilon: \fL^2\hookrightarrow \cL^2$ the natural inclusion as a direct summand. 

We fix a point $x\in  \Igusa '(\Witt)$, and we write $\xzero\in\Sordprime(\Witt)$ for the point below $x$ (thus the point $\Phi(x){\in \Ig(\Witt)}$ is above $\phi(\xzero)\in\Sord(\Witt)$).  Following the notation of Section \ref{ST-coord}, we write 
\begin{eqnarray*}\beta_{\Phi(x)}^*:\Witt[[t]]\otimes (\cL^2)^\vee &\isomto &\cR:=\cR_{\Sord,\phi(\xzero)},\text{ and } \\
\beta_{x}^*:\Witt[[t]]\otimes (\fL^2)^\vee &\isomto& \cR'=\cR_{\Sordprime,\xzero},
\end{eqnarray*}
for the corresponding Serre--Tate isomorphisms of complete  local rings.

In \cite[Proposition{~6.8}]{CEFMV}, we prove that the ring homomorphism $\phi^*:\cR\to\cR'$ induced by the map $\phi:\Sordprime\to\Sord$ satisfies the equality
\begin{align}\label{phii-epsilon}
\phi^*\circ \beta_{\Phi(x)}^*=\beta_x^*\circ (\id\otimes \epsilon^\vee).
\end{align}

\begin{rmk} \label{proj}
For a choice of compatible bases $\fF\subset \fE$ of $\fL^2\subset \cL^2$, the pullback map on local rings $\cR \rightarrow \cR'$ described on coordinates as \[\id\otimes\epsilon^\vee: \Witt[[t]]\otimes (\cL^2)^\vee =\Witt[[t_l|l\in\fE^{\vee}]]\longrightarrow \Witt[[t]]\otimes (\fL^2)^\vee=\Witt[[t_l|l\in \fF^{\vee}]]\] satisfies the equalities 
\[ (\id\otimes\epsilon^\vee)(t_l)=\begin{cases} t_l\text{ if } l\in \fF^\vee\\ 0 \text{ otherwise}\end{cases}   \text{ for all } l\in \fE^\vee .\]
\end{rmk}

In the following,  with abuse of notations, we will identify $\cR'\simeq\Witt[[t_l|l\in \fF^{\vee}]]$ via $\beta^*_x$, and $\cR\simeq \Witt[[t_l|l\in\fE^{\vee}]]$ via $\beta^*_{\Phi(x)}$.

\subsubsection{Restrictions of $p$-adic automorphic forms}\label{restriction} 
Finally, we recall the definition of restriction on the space of $p$-adic automorphic forms. 

Let $\kappa,\kappa'$ be two characters of the torus $T'=T$. Assume $\kappa \in X(T)_+$, and $\kappa'\in X(T')_+$;  i.e., $\kappa$ is dominant for $H$, and $\kappa'$ is dominant for  $H'$. We say that $\kappa'$ {\em contributes to}
$\kappa$ if {$\rho_{\kappa'}$ is a} {quotient} of the restriction of $\rho_\kappa$ from $\Levi$ to $\Levi'$. 

 In the following, we denote by $\varpi_{\kappa,\kappa'}:\left.\rho_\kappa\right|_{H'}\to\rho_{\kappa'}$ a projection of $\Levi'_{\zz_p}$-representations.  If $\kappa' \in X(T')_+$ satisfies $(\kappa')^\sigma=\kappa$ for some $\sigma\in W_\Levi(T)$, then $\kappa'$ contributes to $\kappa$ and we choose $\varpi_{\kappa,\kappa'}: \left.\rho_\kappa\right|_{H'}\to\rho_{\kappa'}$ to be the projection of $\Levi'_{\zz_p}$-representations satisfying the equality $$\ell_{\rm can}^\kappa=\ell_{\rm can}^{\kappa'}\circ \varpi_{\kappa,\kappa'}\circ g_\sigma,$$ where $g_\sigma\in N_\Levin(T)(\zz_p)$ is the elementary matrix lifting  $\sigma$ (and $N_H(T)$ denotes the normalizer of $T$ in $H$).

\begin{rmk}\label{dominant}
If $\kappa$ is a dominant weight of $H$, then $\kappa$ is also dominant for  $H'$ and as a weight of $H'$ it contributes to $\kappa$. 
\end{rmk}

\begin{rmk}
{Assume $\kappa'$ is a weight of $H'$ contributing to a weight $\kappa$ of $H$. Then, for each weight $\lambda$ of $H$, the weight $\lambda\cdot \kappa'$ of $H'$ contributes to $\lambda\cdot\kappa$.}
\end{rmk}

For each $\kappa \in X(T)_+$,  let $\phi^*\cE_\kappa$ denote the pullback over $\cM'$   of the automorphic sheaf over $\cM$. To avoid confusion, we will denote by $\cE'_{\kappa'}$ the automorphic sheaf of weight $\kappa'$ over $\cM'$, for $\kappa' \in X(T')_+$.
For each weight $\kappa' \in X(T')_+$ contributing to $\kappa$, the morphism $\varpi_{\kappa,\kappa'}$ induces a morphism of sheaves over $\cM'$,
\[r_{\kappa,\kappa'}:\phi^*\cE_\kappa\to\cE'_{\kappa'}.\]

We define the {\em $(\kappa,\kappa')$-restriction} on automorphic forms to be the map of global sections
\[{\rm res}_{\kappa,\kappa'}:=r_{\kappa,\kappa'}\circ \phi^*:H^0(\cM,\cE_\kappa)\to H^0(\cM',\cE'_{\kappa'}).\]
By abuse of notation we will still denote by $\res_{\kappa,\kappa'}$ the map induced by $\res_{\kappa,\kappa'}$  between the spaces of sections of automorphic sheaves over the ordinary loci, i.e. 
\[{\rm res}_{\kappa,\kappa'}:H^0(\Sord,\cE_\kappa)\to H^0(\Sordprime,\cE'_{\kappa'}).\]
In the following, we write ${\rm res}_{\kappa}:={\rm res}_{\kappa,\kappa}$.

Finally, we define the {\em  restriction } on  $p$-adic automorphic forms as the pullback on global functions on the Igusa tower under $\Phi:\Igusa '\to\Igusa$,  i.e.
 \[\res:=\Phi^*:V^N\to {V'}^{N'}.\] 

In  \cite[Propositions{~6.5 and 6.6}]{CEFMV},  we compare the two notions of restriction.  Again, to avoid confusion, for all weights  $\kappa' \in X(T')_+$, we
denote by $\Psi'_{\kappa'}$ the inclusion  $H^0(\Sordprime,\cE'_{\kappa'})\to {V'}^{N'}$,  to distinguish it from the inclusion $\Psi_\kappa:  H^0(\Sord,\cE_{\kappa})\to V^N$ for $\kappa \in X(T)_+$.  

Then, for all dominant weights $\kappa \in X(T)_+$,
\begin{align}\label{res}
\res\circ \Psi_\kappa=\Psi'_\kappa\circ \res_\kappa.\end{align}
More generally,  if $\kappa' \in X(T')_+$ satisfies $(\kappa')^\sigma=\kappa$ for some $\sigma\in W_\Levi(T)$, then  
\begin{align}\label{10}
\Psi'_{\kappa'}\circ \res_{\kappa,\kappa'}=\res\circ (g_\sigma \cdot\Psi_\kappa).
\end{align}
for {$g_\sigma\in N_\Levi(T)(\zz_p)$} lifting $\sigma$ {as above}.

\subsection{Pullbacks and differential operators}
Let $\lambda \in X(T)= X(T')$. 
It follows from Definitions \ref{sum-sym-def} and \ref{sym-def} that if $\lambda$ is (sum-)symmetric for $H'$ and dominant for $H$, then it is also (sum-)symmetric for $H$, while the converse is false in general. In the following, we say that a weight of $H$ is {\em $H'$-(sum-)symmetric } if it is (sum-)symmetric for $H'$. 
Similarly, we say that a $p$-adic character of $H$ is $H'$-symmetric if it arises as the $p$-adic limit of $H'$-symmetric weights.

	For any weight (resp. $p$-adic character) $\lambda$ of $H'$, and for all $i=1,\dots ,s$, we write $\lambda_i:=\lambda_{| H_i}$ for the restriction of $\lambda$ to $H_i\subset H'$. Then, for each $i$, $\lambda_i$ is a weight (resp. $p$-adic character) of $H_i$. 
	We observe that a weight $\lambda$ is dominant (resp. sum-symmetric) for $H'$ if and only if, for each $i=1,\dots ,s$, the weight $\lambda_i$ is dominant (resp. sum-symmetric) for $H_i$.
	Furthemore, if, for each $i=1, \dots ,s$, $\lambda_i$ is sum-symmetric for $H_i$ of depth $e_i$, then $\lambda$ is sum-symmetric for $H$ of depth $e=\sum e_i$ if it is dominant. 

\begin{defi}
		A sum-symmetric weight $\lambda$ of $H'$ of depth $e$ is called {\em pure} if there exists $i\in\{1,\dots ,s\}$ such that $\lambda _i$ is sum-symmetric of depth $e$.
		Equivalently, a weight $\lambda$ is pure sum-symmetric if there exists 
		$i\in\{1,\dots ,s\}$ such that $\lambda _j$ is trivial for all $j\neq i$ and is sum-symmetric 
		for $j=i$. 
				
		Similarly,  a symmetric $p$-adic character $\chi$ of $H'$ is called {\em  pure } if  there exists $i\in\{1,\dots ,s\}$ such that $\chi_j:=\chi_{|H_j}$  is trivial for all $j\neq i$ and symmetric for $j=i$.
\end{defi}

\begin{rmk}
		If a sum-symmetric weight $\lambda$ of $H'$ is pure, of depth $e$,  then the associated irreducible representation $\rho_\lambda$ of $H'_{\bZ_p}$ is a quotient of $(\fL_i^2)^{\otimes e} $, for some $i\in\{1,\dots ,s\}$ (where $\oplus_i (\fL^2_i)^{\otimes e}$ is by definition a direct summand of $(\fL^2)^{\otimes e}$).
\end{rmk}

In the following, we say that a weight (resp. $p$-adic character) of $H$ is {\em pure $H'$-sum-symmetric} if it is a pure sum-symmetric for $H'$.

Note that if $\lambda$ is pure $H'$-sum-symmetric, with $\lambda_i$ non-trivial of depth $e$, then $\lambda$ is sum-symmetric of $H$ also of depth $e$, if it is dominant (see Remark \ref{counter}).

\begin{rmk}{
		A weight $\lambda$ of $H$ is both dominant for $H$ and pure $H'$-sum-symmetric if and only if  $\lambda_1$ is sum-symmetric for $H_1$ and $\lambda_j$ is trivial for all $j>1$.
		
		Similarly, a $p$-adic character $\chi$ of $H$ is both pure and $H'$-symmetric if and only if $\chi_1$ is a $p$-adic symmetric character of $H_1$ and $\chi_j$ is trivial for all $j>1$.}
\end{rmk}

For each $H'$-sum-symmetric weight $\lambda$ of $H$ {and positive dominant weight $\kappa'$ of $H'$}, we write
\[ \cD_{\kappa'}^{\lambda}:H^0(\Sordprime, \cE'_{\kappa'})\to H^0(\Sordprime ,\cE'_{\lambda\cdot \kappa'}),\]
for the associated differential operators on the automorphic forms of weight $\kappa'$. 

For each $H'$-symmetric $p$-adic {character} {$\chi$} of $H$, we write 
\[\Theta^{' \chi}: {V'}^{N'}\to {V'}^{N'}\]
for the corresponding operator on the space of $p$-adic automorphic forms (as in Theorem \ref{Theta}).

\begin{prop}\label{prop-pullback}
	For all pure $H'$-symmetric $p$-adic characters $\chi$ of $H$ that are $H$-symmetric 
	\[\res\circ\Theta^\chi =\Theta^{' \chi}\circ \res .\]
\end{prop}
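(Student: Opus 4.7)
The plan is to reduce the identity to an identity of local differential operators on the Serre-Tate expansion ring, where it becomes essentially transparent once one observes that a pure $H'$-symmetric character only activates the variables preserved by the pullback.

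First I would apply Theorem \ref{t-exp} (the $t$-expansion principle): two elements of ${V'}^{N'}$ coincide if and only if their localizations at every point $x\in\Igusa'(\Witt)$ coincide, so it suffices to check
$$\mathrm{loc}_x\circ\res\circ\Theta^{\chi}=\mathrm{loc}_x\circ\Theta^{'\chi}\circ\res.$$
Using Corollary \ref{cong-coro}(1) twice (once for $H$ at $\Phi(x)$ and once for $H'$ at $x$), together with the compatibility $\mathrm{loc}_x\circ\res=\phi^{*}\circ\mathrm{loc}_{\Phi(x)}$ arising from Equation \eqref{phii-epsilon}, the desired identity reduces to the equality of ring maps
$$\phi^{*}\circ\theta^{\chi}=\theta^{'\chi}\circ\phi^{*}\colon\cR\to\cR'.$$

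Next I would use Lemma \ref{theta} to expand both sides. Writing $\tilde{\ell}_{\rm can}^{\chi}=\sum_{\underline l\in\fE^{\vee}_e}a_{\chi,\underline l}\,\underline l$ as in Remark \ref{aipm}, we have
$\theta^{\chi}=\sum_{\underline l\in\fE^{\vee}_e}a_{\chi,\underline l}\,\theta^{\underline l}_e$, and analogously $\theta^{'\chi}=\sum_{\underline l\in\fF^{\vee}_e}a'_{\chi,\underline l}\,\theta^{\underline l}_e$. The central claim is that $a_{\chi,\underline l}=0$ whenever some component of $\underline l$ lies outside $\fF^{\vee}$, and that the surviving coefficients satisfy $a_{\chi,\underline l}=a'_{\chi,\underline l}$. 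Because $\chi$ is pure $H'$-symmetric there exists a unique index $i_0$ with $\chi_{i_0}$ non-trivial, so that the standard basis vectors $\fb_{\tau,j}^{\vee}$ appearing with non-zero multiplicity in the explicit formula \eqref{eqn-lcan-formula} all belong to the $i_0$-block $\fB^{(i_0)}$. The generalized Young symmetrizer $c_{\chi}$ permutes tensor factors of the same $\tau,\pm$-type without mixing blocks, so every simple tensor in the expansion of $\tilde{\ell}_{\rm can}^{\chi}$ pairs a $+$-basis vector and a $-$-basis vector from $\fB^{(i_0)}$, i.e.\ lies in $((\fL_{i_0}^{2})^{\vee})^{\otimes e}\subset(\fF^{\vee})^{e}$. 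The same computation is intrinsic to the $i_0$-block, which is why it also produces the coefficients $a'_{\chi,\underline l}$.

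Finally I would check that $\phi^{*}$ commutes with each $\theta^{\underline l}_e$ for $\underline l\in(\fF^{\vee})^{e}$. By Remark \ref{proj}, $\phi^{*}$ fixes the variables $t_l$ with $l\in\fF^{\vee}$ and kills the others; for $l\in\fF^{\vee}$ the operator $\theta_{i,j}^{\tau}=(1+t_l)\partial/\partial t_l$ both preserves the subring of formal power series in the variables indexed by $\fF^{\vee}$ and commutes with setting the remaining variables to zero. Iterating gives $\phi^{*}\circ\theta^{\underline l}_e=\theta^{\underline l}_e\circ\phi^{*}$, and combining with the coefficient matching from the previous step yields $\phi^{*}\circ\theta^{\chi}=\theta^{'\chi}\circ\phi^{*}$.

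The main obstacle is the combinatorial verification in the second step: one must track how the generalized Young symmetrizer acts on a standard basis of $\cL^{\otimes 2e}$ and check that purity forces all surviving tensors to sit inside $(\fL_{i_0}^{2})^{\otimes e}$, and moreover that the coefficients one obtains agree with those produced by the analogous calculation internal to $H'$. This is essentially a re-use of Lemma \ref{Lemma-lcan-composition} in the ``block-diagonal'' direction, and the $H$-symmetric hypothesis on $\chi$ is precisely what guarantees that $\Theta^{\chi}$ is defined as an operator on $V^N$ via Corollary \ref{cong-coro}, so that both sides of the proposition make sense.
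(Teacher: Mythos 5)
Your proposal is correct and follows essentially the same route as the paper: reduce via the $t$-expansion principle to an identity of the local operators $\theta$ on Serre--Tate expansions, observe that purity forces $a_{\lambda,\underline{l}}=0$ for all $\underline{l}\in \fE_e^{\vee}\setminus\fF_e^{\vee}$ (with the surviving coefficients matching those computed inside $H'$), and note that $(1+t_l)\partial_l$ for $l\in\fF^{\vee}$ commutes with setting the remaining variables to zero. The one adjustment: since $\tilde{\ell}_{\rm can}^{\chi}$ and the coefficients $a_{\chi,\underline{l}}$ of Remark \ref{aipm} are only defined for integral weights, you should first reduce from the $p$-adic character $\chi$ to an integral symmetric weight via the congruences of Theorem \ref{congruence} (as the paper does) before invoking Lemma \ref{theta}.
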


\begin{proof}
	By the $t$-expansion principle, it suffices to verify the above equality after localizing at an ordinary point, i.e. as an equality of operators on $t$-expansions.
	Moreover, by Theorem \ref{congruence}, it is enough to consider the case {when $\chi$ has integral weight $\lambda$}. 
	
	In view of Remark \ref{proj} (together with the formulas in Lemmas \ref{formula-lemma} and \ref{theta}), the statement is a consequence of {the assumptions on $\lambda$ (which imply that the coefficients $a_{\lambda,\underline{l}}$ defined in Remark \ref{aipm} satisfy $a_{\lambda,\underline{l}}=0$ for all $\underline{l}\in \fE_e^{\vee}-\fF_e^\vee$)
		together with} the following general fact. For all positive integers $n, m\in \bN$, $n\geq m$,  the homomorphism of $\Witt$-algebras $\Witt[[t_1,\dots ,t_n]]\to\Witt[[t_1,\dots ,t_m]]$ defined as \[ f=f(t_1,\dots ,t_n)\mapsto f(t,0):=f(t_1,\dots, t_m,0,\dots 0)\]  satisfies the equalities  \[(1+t_i)\frac{\partial}{\partial t_i}(f(t,0))= ((1+t_i)\frac{\partial}{\partial t_i}f)(t,0),\]
	for all $i=1,\dots, m$.  
\end{proof}

\begin{rmk}\label{counter} 
		As an example, we consider the partition $(1,1), (1,1)$ of the signature $(2,2)$, and weights $\lambda=(2,0, 2,0)$ and $\lambda'=(1,1,1,1)$. Note that both $\lambda,\lambda'$ are dominant symmetric weights of both $H=GL(2)\times GL(2)$ and $H'=GL(1)\times GL(1)\times GL(1)\times GL(1)$,  but only $\lambda$ is $H'$-pure.
		Write $\Phi^*:\cR\simeq\Witt[[t_{1,3},t_{1,4},t_{2,3},t_{2,4}]]\ra \cR'\simeq\Witt[[T_{1,3},T_{2,4}]]$ for the map of complete local rings at an ordinary point corresponding to the inclusion of Igusa varieties. With our notations, $\Phi^*(t_{i,j})= T_{i,j}$ for $(i,j)=(1,3),(2,4)$  and $0$ otherwise.
		If we compute the associated differential operators $\theta^\lambda$ and $\theta^{\lambda'}$ of $\cR$, we obtain $\theta^\lambda=(\theta_{1,3})^2$ and $\theta^{\lambda'}=\theta_{1,3}\theta_{2,4}-\theta_{1,4}\theta_{2,3}$.
		If instead we compute the associated differential operators $\theta^{'\lambda}$ and $\theta^{'\lambda'}$on $\cR'$, we obtain  
		$\theta^{'\lambda}=(\theta'_{1,3})^2$ and $\theta^{'\lambda'}=\theta'_{1,3}\theta'_{2,4}$. It is easy to check that as maps on $\cR'$
		$\theta^{'\lambda}\circ \Phi^*=\Phi^*\circ \theta^\lambda$ but $\theta^{'\lambda'}\circ \Phi^*\neq \Phi^*\circ \theta^{\lambda'}$. 
		An explanation comes from the fact that the weight $\lambda$ has depth $2$ for both $H$ and $H'$, while $\lambda'$ should be regarded as of depth $2$ for $H$ and $1$ for $H'$.
	\end{rmk}

	We now consider the case of a pure symmetric $p$-adic character $\chi$ of $H'$, which is not a symmetric  $p$-adic character of $H$ (i.e., of $\chi$ arising as the $p$-adic limit of pure symmetric weights which are dominant for $H'$ but not for $H$).
	For such $p$-adic characters we have {already} defined a differential operator on automorphic forms on $H'$ but not on $H$, Proposition \ref{extend} explains how to extend it to $H$.

	Note that for any weight $\lambda$ of $H'$, there is a unique weight $\lambda_0$ which is dominant for $H$ and conjugate to $\lambda$ under the action of the Weyl group $W_H(T)$, i.e. $\lambda_0=\lambda^\sigma$ for some $\sigma \in W_H(T)$. 
	If $\lambda$ is a (sum-)symmetric weight of $H'$, then $\lambda_0$ is a (sum-)symmetric weight of $H$. Furthermore, if $\lambda$ is pure sum-symmetric for $H'$, then the permutation $\sigma\in W_H(T)$ can be chosen to arise from a permutation of 
	$\{1, \dots ,s\}$. More precisely, if $i\in\{1,\dots, s\}$ is such that $\lambda_j$ is trivial for all $j\neq i$, then we can choose $\sigma=\sigma_{(1i)}$ to correspond to the permutation $(1i)$. (Here, for each permutation $\gamma$ on $\{1,\dots ,s\}$, we define $\sigma_\gamma$ to be the element of $W_H(T)$ induced by the action of $\gamma$ on the partition $\{( \siga_\tau^{(j)},\sigb_\tau^{(j)})_{\tau\in\Sigma} |j=1, \dots ,s\}$ of the signature $(\siga_\tau,\sigb_\tau)_{\tau\in\Sigma}$.)
	In particular, we observe that $\lambda_0$ is pure sum-symmetric for the subgroup $H'_\sigma:=\prod_{1\leq j\leq s}H_{\gamma(j)}$ of $H$, for $\gamma=(1i)$.

		\begin{prop}\label{extend}
			
			For all pure symmetric $p$-adic characters $\chi$ of $H'$, let  $\sigma=\sigma_{(1i)}$, for $i\in\{1,\dots ,s\}$ such that $\chi_j$ is trivial for all $j\neq i$. Then $\chi^\sigma$ is a symmetric $p$-adic character of $H$ and 
			\[ \Theta^{' \chi}\circ \res =\res \circ (g_\sigma \circ \Theta^{\chi^\sigma}\circ  g_\sigma^{-1}).\]
		\end{prop}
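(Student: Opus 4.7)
The plan is to reduce to classical weights via density and congruences, then to transfer the identity from a Weyl-conjugate Levi back to $H'$ by a direct computation on Serre-Tate coordinates.

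First, I would verify that $\chi^\sigma$ is a symmetric $p$-adic character of $H$, so that $\Theta^{\chi^\sigma}$ is defined via Corollary \ref{cong-coro}. For any classical pure sum-symmetric weight $\lambda$ of $H'$ concentrated in block $i$, the discussion preceding the proposition shows that $\lambda^\sigma$ is an $H$-dominant sum-symmetric weight; the symmetry condition of Definition \ref{sym-def} is preserved because $\sigma=\sigma_{(1i)}$ permutes entire signature blocks. Passing to a $p$-adic limit gives the claim. By Theorem \ref{density}, Theorem \ref{congruence}, and Corollary \ref{cong-coro} applied to both $\Theta^{\chi^\sigma}$ and $\Theta'^\chi$, it then suffices to prove the identity for classical pure symmetric weights $\lambda$ of $H'$; fix such a $\lambda$, supported on block $i$.

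Next, I would introduce the conjugate subgroup $H'_\sigma := g_\sigma H' g_\sigma^{-1}\subset H$, which arises from the self-dual decomposition $L=\bigoplus_j W_{\gamma(j)}$ obtained by transposing the labels via $\gamma=(1i)$. By the last remark before the proposition, $\lambda^\sigma$ is both $H$-symmetric and pure $H'_\sigma$-symmetric, so Proposition \ref{prop-pullback} applied to the pair $(H,H'_\sigma)$ yields
\[
    \res_{H'_\sigma}\circ \Theta^{\lambda^\sigma} \;=\; \Theta'^{\lambda^\sigma}_{H'_\sigma} \circ \res_{H'_\sigma}.
\]
Identifying the two sides with their $H'$-counterparts via conjugation by $g_\sigma$ produces the stated identity. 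Concretely, this identification is a relabeling of blocks: $g_\sigma$ permutes the compatible basis $\fB$ of $\cL$ of Section \ref{section-basisdef}, and hence the Serre-Tate parameters $t_l$ for $l\in\fE^\vee$, according to $\gamma$, intertwining $\res_{H'_\sigma}$ with $\res\circ g_\sigma^{-1}$ and $\Theta'^{\lambda^\sigma}_{H'_\sigma}$ with $g_\sigma\circ \Theta'^\lambda\circ g_\sigma^{-1}$ in an appropriate sense on $V_{\infty,\infty}$.

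The main obstacle will be making precise these intertwining identifications, which refine the classical-weight Equation \eqref{10} to the level of $p$-adic differential operators. This reduces to a direct computation on Serre-Tate expansions at a point $x\in\Ig'(\Witt)$: under the permutation $\gamma$ of blocks, the explicit formula for $\theta^{\lambda^\sigma}$ from Lemma \ref{theta} and Corollary \ref{cordef}, written in the $H'_\sigma$-adapted variables, transforms termwise into that for $\theta'^\lambda$ in the $H'$-adapted variables, while the pullback $\phi^*$ annihilates exactly the off-block Serre-Tate variables (Remark \ref{proj}). The equality of $t$-expansions at $x$ together with the $t$-expansion principle (Theorem \ref{t-exp}) then conclude.
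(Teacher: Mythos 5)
Your proposal is correct and follows essentially the same route as the paper: reduce to classical integral weights via Theorem \ref{congruence}, localize via the $t$-expansion principle, and use that $g_\sigma$ acts on Serre--Tate coordinates by $t_l\mapsto t_{\sigma(l)}$ so that $g_\sigma\circ\theta^{\lambda^\sigma}\circ g_\sigma^{-1}$ becomes a (possibly non-$H$-dominant) operator $\theta^{\lambda}$ to which the off-block-annihilation computation of Proposition \ref{prop-pullback} applies. Your packaging of the middle step as Proposition \ref{prop-pullback} for the conjugate pair $(H,H'_\sigma)$ followed by a relabeling of blocks is precisely the computation the paper carries out directly.
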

		\begin{proof}
			As  in the proof of Proposition \ref{prop-pullback}, we use Theorem  \ref{congruence} to reduce to the case when $\chi$ has integral weight $\lambda$, 
			and by the $t$-expansion principle it suffices to establish the equality after localization at a point $x$ of the smaller Igusa tower $\Igusa '$. 
			For all $f\in V^N$, on one side we have
			\[ (\Theta^{' \lambda}\circ \res)(f)_x(t) =\theta^{'\lambda}(\res(f)_x(t))=\theta^{' \lambda}\circ \Phi^* (f_{\Phi(x)}(t)),\]
			and on the other  
			\begin{eqnarray*}(\res \circ (g_\sigma \circ \Theta^{\lambda^\sigma}\circ  g_\sigma^{-1}))(f)_x(t) &=&
			\Phi^*((g_\sigma \circ \Theta^{\lambda^\sigma}\circ  g_\sigma^{-1})(f)_{\Phi(x)}(t))\\ &=&
			(\Phi^*\circ g_\sigma)((\Theta^{\lambda^\sigma}\circ  g_\sigma^{-1})(f)_{\Phi(x)^{g_\sigma}}(t))\\ 
			&=& (\Phi^*\circ g_\sigma\circ \theta^{\lambda^\sigma})((g_\sigma^{-1})(f)_{\Phi(x)^{g_\sigma}}(t))\\ &=& (\Phi^*\circ g_\sigma\circ \theta^{\lambda^\sigma}\circ g_\sigma^{-1})((f)_{\Phi(x)}(t)).
			\end{eqnarray*}
			Thus, we have reduced the statement to an equality of two maps $\cR\simeq \Witt[[t_l|l\in \fE^\vee]]\ra \cR'\simeq \Witt[[t_l|l\in \fF^\vee]]$, i.e. \[\theta^{' \lambda}\circ \Phi^*=\Phi^*\circ g_\sigma\circ \theta^{\lambda^\sigma}\circ g_\sigma^{-1},\]
			where $\Phi^*={\rm id}\otimes\epsilon^\vee$  is the map on complete local rings corresponding to the map $\Phi$ between Igusa varieties described in Remark \ref{proj}.
			Recall that the action of $g_\sigma$ on $\Witt[[t_l|l\in \fE^\vee]] $ is given by the formula $g_\sigma(t_l)=t_{\sigma(l)}$ for all $l\in\fE^\vee$.
			Write $\theta^{\lambda^\sigma}= \sum_{\underline{l}\in\fE^\vee_e} a_{\kappa,\underline{l}}\cdot \theta^{\ul d(\underline{l})}$ and define $\theta^\lambda:=\sum_{\underline{l}\in\fE^\vee_e} a_{\kappa,\underline{l}}\cdot \theta^{\ul d(\sigma(\underline{l}))}$ (note that here $\lambda$ is possibly not dominant for $H$).
			Then, on the right hand side, we have
			\[\Phi^*\circ g_\sigma\circ \theta^{\lambda^\sigma}\circ g_\sigma^{-1} = \Phi^*\circ  \theta^{\lambda}\circ g_\sigma\circ g_\sigma^{-1}= \Phi^*\circ  \theta^{\lambda}. \]
	Finally, the same computation as in the proof of Proposition \ref{prop-pullback} implies
			$ \theta^{' \lambda}\circ \Phi^*=\Phi^*\circ  \theta^{\lambda}$.				
			\end{proof}

	\begin{rmk}
		Given a pure $H'$-(sum)-symmetric dominant weight $\lambda $ of $H$, Proposition \ref{prop-pullback} together with Equation \eqref{res} imply the equality
		\[\res_{\lambda\cdot \kappa}\circ D_\kappa^\lambda=\cD_\kappa^\lambda\circ \res_\kappa,\] 
		for all positive dominant weights $\kappa$ of $H$.  In fact, the same argument, combined with Equation (\ref{10}), also proves
		\[{\rm res}_{\lambda\cdot \kappa, \lambda\cdot \kappa'}\circ D^\lambda_\kappa= {\cD}_{\kappa'}^{\lambda}\circ {\rm res}_{\kappa,\kappa'},\] 
		for all dominat weights $\kappa$ of $H$, and $\kappa'$ of  $H'$,  such that $(\kappa')^\sigma=\kappa$ for some  $\sigma\in W_H(T)$,  assuming $\lambda^\sigma=\lambda$.
		Similarly, given a pure sum-symmetric weight $\lambda$ of $H'$, if $i\in\{1,\dots ,s\}$ is such that $\lambda_0=\lambda^{\sigma_{(1i)}}$ is a (sum-symmetric) dominant weight of $H$, then Proposition \ref{extend} and Equation (\ref{10}) together imply that
		\[\res_{\lambda_0\cdot\kappa,\lambda\cdot\kappa}\circ D^{\lambda_0}_{\kappa}\circ g_\sigma^{-1}= \cD^{\lambda}_{\kappa} \circ \res_{\kappa},\]
		for all positive dominant weights $\kappa$ of $H$, satisfying $\kappa^{\sigma_{(1i)}}=\kappa$.
	\end{rmk}

\begin{rmk} Let $\kappa,\kappa',\lambda,\lambda_0$ and $\sigma,\sigma_{(1i)}$ be as above.
The choice of projection $\varpi_{\kappa,\kappa'}: \left.\rho_\kappa\right|_{H'}\to\rho_{\kappa'}$, satisfying 
$\ell_{\rm can}^\kappa=\ell_{\rm can}^{\kappa'}\circ \varpi_{\kappa,\kappa'}\circ g_\sigma$, uniquely determines one quotient of 
$\rho_\kappa|_{H'}$ of weight $\kappa'$, even when the multiplicity of the $\Levi'_{\zz_p}$-representation $\rho_{\kappa'}$ in $\rho_\kappa|_{H'}$ (by which we mean the rank of $\Hom_{\Levi'_{\zz_p}}(\rho_\kappa|_{H'},\rho_{\kappa'})$) is larger than 1. 
For a general $\lambda$,  the multiplicities of $\rho_{\lambda\kappa'}$ in $\rho_{\lambda\kappa}|_{H'}$ and of  $\rho_{\kappa'}$ in $\rho_{\kappa}|_{H'}$ 
(resp. of $\rho_{\lambda_0\kappa}$ in $\rho_{\lambda\kappa}|_{H'}$ and of  $\rho_{\kappa}$ in $\rho_{\kappa}|_{H'}$) 
 might not agree. However, our (uniform) choice of projections ensures the
 compatibility of the resulting restrictions of automorphic forms with
 the differential operators. 
\end{rmk}

\section{$p$-adic families of automorphic forms}\label{families-section}\label{appli}

In this section, we construct $p$-adic families of automorphic forms on unitary groups.  To construct the families, we apply the differential operators introduced above to the Eisenstein series constructed in \cite{apptoSHLvv, apptoSHL} and then apply Theorem \ref{congruence}.  We also construct a $p$-adic measure by applying the description of the differential operators in Section \ref{congruences-actions-section} (especially Equation \eqref{phikap-rmk}).

\subsection{Prior results on families for signature $(n,n)$}
We begin by recalling the Eisenstein series in \cite{apptoSHLvv}, which include the Eisenstein series in \cite{kaCM, apptoSHL} as special cases.  Similarly to the notation in \cite{apptoSHLvv}, for $k\in \ZZ$ and $\nu = \left(\nu(\sigma)\right)_{\sigma\in\Sigma}\in\ZZ^\Sigma$, we denote by $\mathbf{N}_{k, \nu}$ the function
\begin{equation*}
\mathbf{N}_{k, \nu}: \cmfield^\times \rightarrow \cmfield^\times \qquad b \mapsto \prod_{\sigma\in\Sigma}\sigma(b)^{k}\left(\frac{\sigma(b)}{\overline{\sigma}(b)}\right)^{\nu(\sigma)}.
\end{equation*}
Note that for all $b\in\mathcal{O}_{\realfield}^\times$, $\mathbf{N}_{k, \nu}(b) = \mathbf{N}_{\realfield/\IQ}(b)^k$.

The theorem below gives explicit $q$-expansions of automorphic forms.  Note that as explained in \cite[Section 8.4]{hida}, to apply the $p$-adic $q$-expansion principle in the case of a unitary group $U(n,n)$ of signature $(n,n)$ at each place (for some integer $n$), it is enough to check the cusps parametrized by points of $\GMplus\left(\adeles_{\realfield}\right)$, where $GM_+$ denotes a certain Levi subgroup of $U(n,n)$.  (More details about cusps appear in \cite[Chapter 8]{hida} and, as a summary, in \cite{apptoSHLvv}; we will not need the details here.)

\begin{thm}[Theorem 2 in \cite{apptoSHLvv}]\label{thm2}
Let $R$ be an $\OK$-algebra, let $\nu = \left(\nu(\sigma)\right)\in\ZZ^\Sigma$, and let $k\geq n$ be an integer.    Let
\begin{align*}
F: \left(\OK\otimes\ZZ_p\right)\times M_{n\times n}\left(\Oreal\otimes\ZZ_p\right)\rightarrow R
\end{align*}
be a locally constant function supported on $\left(\OK\otimes\ZZ_p\right)^\times\times \GL_n\left(\Oreal\otimes\ZZ_p\right)$ that satisfies
\begin{align}\label{equnknualakacm}
F\left(ex, \mathbf{N}_{K/E}(e^{-1})y\right) = \mathbf{N}_{k, \nu}(e)F\left(x, y\right)
\end{align}
for all $e\in \OK^\times$, $x\in \OK\otimes\ZZ_p$, and $y\in M_{n\times n}\left(\Oreal \otimes\ZZ_p\right)$.  There is an automorphic form $G_{k, \nu, F}$ of parallel weight $k$ on $GU(n,n)$
defined over $R$ whose $q$-expansion at a cusp $m\in GM_+\left(\adeles_{\realfield}\right)$ is of the form $\sum_{0<\alpha\in L_m}c(\alpha)q^\alpha$ (where $L_{m}$ is a lattice in $\hern(\cmfield)$ determined by $m$), with $c(\alpha)$ a finite $\ZZ$-linear combination of terms of the form
\begin{align*}
F\left(a, \mathbf{N}_{K/E}(a)^{-1}\alpha\right)\mathbf{N}_{k, \nu}\left(a^{-1}\det\alpha\right)\mathbf{N}_{E/\IQ}\left(\det\alpha\right)^{-n}
\end{align*}
(where the linear combination is a sum over a finite set of $p$-adic units $a\in \cmfield$ dependent upon $\alpha$ and the choice of cusp $m$).\end{thm}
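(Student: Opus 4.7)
The plan is to realize $G_{k,\nu,F}$ as a Siegel-type Eisenstein series on $\GU(n,n)$, compute its $q$-expansion cusp by cusp via unfolding, and descend from $\bC$ to $R$ using the $q$-expansion principle for $\GU(n,n)$. First I would build an adelic section $\phi_{k,\nu,F} = \otimes_v \phi_v$ of the degenerate principal series attached to the Siegel parabolic $P \subset \GU(n,n)$: at each $\sigma \in \arch$, take the unique (up to scalar) lowest-weight vector of weight $(k,\nu(\sigma))$, ensuring that the resulting Eisenstein series is holomorphic of the prescribed weight; at finite places away from $p$, take a standard spherical or Iwahori-fixed section adapted to the level $\cpct^p$ and to the cusp $m \in \GMplus(\adeles_{\realfield})$; at $p$, take the section built from $F$ by viewing $F$ as a Schwartz--Bruhat function on the big Bruhat cell, using that $p$ splits completely in $\cmfield$ to identify $G(\bQ_p)$ with a product of general linear groups. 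The equivariance condition \eqref{equnknualakacm} is precisely the $\OK^\times$-equivariance needed so that $\phi_p$, and therefore $\phi_{k,\nu,F}$, descends to a well-defined section on $G(\bQ)\backslash G(\adeles)$.

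Second, I would form the Eisenstein series $E(g,s;\phi_{k,\nu,F}) = \sum_{\gamma \in P(\bQ)\backslash G(\bQ)} \phi_{k,\nu,F}(\gamma g, s)$ and specialize to the critical point $s_0$ attached to the weight $k$. The hypothesis $k \geq n$ places the specialization inside Shimura's holomorphicity range for Siegel Eisenstein series on $U(n,n)$ (directly in the range of absolute convergence when $k$ is large enough, and otherwise after the standard analytic continuation), so that $E(g,s_0)$ is a holomorphic automorphic form. Unfolding against an additive character $\psi_\alpha$ on the unipotent radical of $P$, indexed by $\alpha \in \hern(\cmfield)$, factors the $\alpha$-Fourier coefficient at the cusp $m$ as a product of local integrals. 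The archimedean integrals enforce positivity $\alpha > 0$ via the classical holomorphic-vector calculation and contribute the factor $\mathbf{N}_{k,\nu}(\det\alpha)$; the finite places away from $p$ contribute local Siegel series that, together with the measure comparison between $N_P$ and $\hern(\cmfield)$, produce the normalization $\mathbf{N}_{\reflex/\bQ}(\det\alpha)^{-n}$; the integral at $p$ is an explicit partial Fourier transform of $F$ that, after a Bruhat-cell decomposition, reduces to a finite $\bZ$-linear combination of terms $F(a, \mathbf{N}_{\cmfield/\reflex}(a)^{-1}\alpha)\,\mathbf{N}_{k,\nu}(a^{-1}\det\alpha)$ with $a$ running over a finite set of $p$-adic units depending on $\alpha$ and the cusp $m$.

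The resulting $q$-expansion has coefficients in $\OK$ (or in $R$, when $F$ takes values in $R$), so by the $q$-expansion principle for $\GU(n,n)$ recalled in \cite[\S 8]{hida}, the compatible system of $q$-expansions at the $\GMplus(\adeles_{\realfield})$-cusps uniquely determines an automorphic form $G_{k,\nu,F}$ defined over $R$. The main obstacle will be the local computation at $p$: one must verify that the section $\phi_p$ built from $F$ produces exactly the claimed finite sum with the right normalizations. This requires careful bookkeeping of the measure normalizations arising from the splitting of $p$ in $\cmfield$ and of the interplay between the support condition $\GL_n(\Oreal \otimes \bZ_p)$ on $F$ and the Iwasawa--Bruhat decomposition entering the local Fourier transform; once this identification is in hand, the classical framework of Shimura's arithmeticity machinery and Hida's $q$-expansion principle take over to yield the claimed form over $R$.
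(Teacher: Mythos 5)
This statement is quoted verbatim from prior work (it is Theorem 2 of \cite{apptoSHLvv}), and the present paper gives no proof of it; it is imported as an input to Section 7, so there is no in-paper argument to compare yours against. That said, your sketch is a faithful reconstruction of the strategy actually used in the cited reference and its antecedents \cite{kaCM, apptoSHL, sh}: a Siegel Eisenstein series on $GU(n,n)$ built from a factorizable section whose component at $p$ encodes $F$ via a partial Fourier transform, holomorphy at the relevant point for $k \geq n$ via Shimura's analysis, local Siegel-series computations producing the $\mathbf{N}_{E/\IQ}(\det\alpha)^{-n}$ normalization and the finite sum over $p$-adic units $a$, and the $q$-expansion principle to define the form over $R$. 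You correctly identify the local computation at $p$ as the genuinely delicate step; in the cited construction this is exactly where the support condition on $F$ and the equivariance \eqref{equnknualakacm} are consumed. If you intend to cite rather than reprove this result, as the authors do, no further work is needed; if you intend to reprove it, the one place your outline is thinner than the literature is the verification that the specialization at $s_0$ is holomorphic (not merely nearly holomorphic) for all $k \geq n$, which in Shimura's treatment requires case analysis at the boundary of the convergence range.
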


Let $\left(R, \iota_\infty\right)$ consist of an $\cO_{E',(p)}$-algebra $R$ together with a ring inclusion $\iota_\infty: R\hookrightarrow \IC$.  Given an automorphic form $f$ defined over $R$, we view $f$ as a $p$-adic automorphic form via $\Psi$ or as a $\ci$-automorphic form after extending scalars via $\iota_\infty: R\hookrightarrow \IC$. 

\begin{rmk}
The $\ci$-automorphic forms $G_{k, \nu, F}$ are closely related to the $\ci$-Eisenstein series studied by Shimura in \cite{sh}; the difference between Shimura's Eisenstein series and these ones is the choice of certain data at $p$, which allows one to put $G_{k, \nu, F}$ into a $p$-adic family.  For $R = \IC$, these are the Fourier coefficients at $s=\frac{k}{2}$ of certain $\ci$-automorphic forms $G_{k, \nu, F}\left(z, s\right)$ (holomorphic in $z$ at $s=\frac{k}{2}$) of parallel weight $k$ defined in \cite[Lemma 9]{apptoSHLvv}.  We do not need further details about those $\ci$-Eisenstein series for the present paper.
  \end{rmk}

\subsection{Families for arbitrary signature} We use the notations introduced in Section  \ref{section-G-prime}, with $GU=GU(n,n)$ and $G'$ of arbitrary signature. In particular, we still denote by $H$ and $H'$ the associated Levi subgroups.

For each symmetric weight $\kappa$ of $H'$, we define an automorphic form of weight ${\underline{k}}+\kappa$ for $G'$
\begin{align*}
G_{k, \nu, F, \kappa}:=  \Theta^\kappa \res_{\underline{k}} G_{k, \nu, F},
\end{align*}
where $\res_{\underline{k}}$ is the restriction on automorphic forms from $GU(n,n)$ to $G'$,
(i.e. the pullback followed by projection onto an irreducible quotient) 
as introduced in Section \ref{section-G-prime}.

\begin{rmk}\label{pure-res}
Proposition \ref{prop-pullback}  implies that if the weight $\kappa$ is pure $H'$-symmetric and $H$-symmmetric, then the automorphic form $G_{k, \nu, F, \kappa}$ agrees with $ \res \Theta^\kappa G_{k, \nu, F}$, the restriction to $G'$ of the form $\Theta^\kappa G_{k, \nu, F}$ for $U(n,n)$.
\end{rmk}

As an immediate consequence of Theorem \ref{congruence} 
and Remark \ref{proj} 
applied to $\res_k G_{k, \nu, F}$, we obtain the following result.
\begin{thm}\label{fam-thm}
Let $\kappa$ and $\kappa'$ be two symmetric weights satisfying the conditions of Proposition \ref{congruence}.  Then $G_{k, \nu, F, \kappa}\equiv G_{k, \nu, F, \kappa'}$ inside $V'^{N'}/p^{m+1}V'^{N'}$.
\end{thm}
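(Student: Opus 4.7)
The plan is to apply Theorem \ref{congruence} directly to $G_{k,\nu,F}$ and then push the resulting congruence through the restriction map; the theorem is essentially a formal corollary of the work in Sections \ref{mainresultsops-section} and \ref{pullbacks-section}.

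\emph{Step 1 (operator congruence on $V^N$).} By hypothesis $\kappa$ and $\kappa'$ satisfy the conditions of Proposition \ref{cong1}, so Theorem \ref{congruence} yields
\[
\Theta^\kappa \equiv \Theta^{\kappa'} \pmod{p^{m+1}}
\]
as endomorphisms of the space $V^N$ of $p$-adic automorphic forms on $GU(n,n)$. I would then view the classical form $G_{k,\nu,F}$ of Theorem \ref{thm2} as an element of $V^N$ via the embedding $\Psi$ of Section \ref{PSI}, and apply both sides to obtain
\[
\Theta^\kappa G_{k,\nu,F} \equiv \Theta^{\kappa'} G_{k,\nu,F} \pmod{p^{m+1}}
\]
in $V^N$.

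\emph{Step 2 (transport across $\res$).} By construction in Section \ref{restriction}, the restriction $\res:V^N \to V'^{N'}$ is the pullback $\Phi^\ast$ of global functions along the canonical lift $\Phi:\Igusa' \to \Igusa$ of $\phi:\Sordprime \to \Sord$. As a ring homomorphism of $\Witt$-algebras of global functions on the two Igusa towers, it is $p$-adically continuous and sends congruences mod $p^{m+1}$ to congruences mod $p^{m+1}$. Applying $\res$ to the congruence from Step 1 gives
\[
G_{k,\nu,F,\kappa} \;=\; \res\,\Theta^\kappa G_{k,\nu,F} \;\equiv\; \res\,\Theta^{\kappa'} G_{k,\nu,F} \;=\; G_{k,\nu,F,\kappa'} \pmod{p^{m+1}}
\]
in $V'^{N'}$, which is exactly the asserted congruence in $V'^{N'}/p^{m+1}V'^{N'}$.

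If a Serre-Tate-local verification is preferred (as the excerpt suggests by invoking Remark \ref{proj}), the alternative is to pick a point $x \in \Igusa'(\Witt)$ and use Remark \ref{proj}: on the complete local rings, $\Phi^\ast$ sends $t_l \mapsto t_l$ for $l$ in the basis $\fF^\vee$ of $(\fL^2)^\vee$ and $t_l \mapsto 0$ otherwise. Combining the explicit description of $\theta^\kappa$ in Lemma \ref{theta} with the congruence $\theta^\kappa \equiv \theta^{\kappa'} \bmod p^{m+1}$ of Proposition \ref{cong1}, and passing through the quotient $\Phi^\ast$, one finds that the $t$-expansions of $\res\,\Theta^\kappa G_{k,\nu,F}$ and $\res\,\Theta^{\kappa'} G_{k,\nu,F}$ at $x$ agree modulo $p^{m+1}$; the $t$-expansion principle (Theorem \ref{t-exp}) then concludes. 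No step here presents a genuine obstacle: the substantive work is already carried out in Theorem \ref{congruence} and in the description of $\Phi^\ast$ on Serre-Tate coordinates.
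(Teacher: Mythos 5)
Your proposal is correct and takes essentially the same route as the paper, which states the result as an immediate consequence of Theorem \ref{congruence} and Remark \ref{proj}: your Step 1 is exactly the appeal to Theorem \ref{congruence}, and your Step 2 (in either its ring-homomorphism form or the Serre--Tate-local variant via Remark \ref{proj} and Theorem \ref{t-exp}) is the transport across $\res$ that the paper leaves implicit.
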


Proposition \ref{ordCM-prop} below summarizes the relationship between the values at CM points (together with a choice of trivialization) of the $p$-adic automorphic forms obtained by applying $p$-adic differential operators to $\res_{\underline{k}} G_{k, \nu, F}$ and the values of $\ci$ automorphic forms obtained by applying $\ci$ differential operators to $\res_{\underline{k}} G_{k, \nu, F}$.

\begin{prop}\label{ordCM-prop} 
For each locally constant function $F$ as in Theorem \ref{thm2}, the values of $G_{k, \nu, F, \kappa}$ and $\ellcan^{\underline{k}\cdot\kappa}\circ \pi^\infty_{\kappa\cdot\underline{k}}\circ D_{\underline{k}}^{\kappa}\left(\ci\right) \res_{\underline{k}} G_{k, \nu, F}$, where $\pi^\infty_{\kappa\cdot\underline{k}}$ denotes the projection onto an irreducible subspace of highest weight $\kappa\cdot\underline{k}$,
agree at each ordinary CM point $A$ over $R$ (together with a choice of trivialization of $\uo_A/R$) up to a period.  
\end{prop}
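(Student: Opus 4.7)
The plan is to deduce the statement from the fundamental observation of Katz (\cite[(2.4)]{kaCM} or \cite[Lemma (5.1.27)]{kaCM}) that at an ordinary CM point, the unit root decomposition of $\hdr^1(A/R)$ and the Hodge decomposition of $\hdr^1(A/\IC)$ (after base change via $\iota_\infty$) coincide. Both splittings of the Hodge filtration \eqref{dualexactsequence} are canonical, and on an ordinary CM abelian variety both are forced to agree with the splitting singled out by the $\cO_K$-action (concretely, by the fact that $T_p A_0$ decomposes as a sum of $\cO_{K_{\fP_i}}$- and $\cO_{K_{\fP_i^c}}$-parts). Granted this, the only difference between $D_{\underline k}^\kappa$ of Section \ref{Dwork-sec} and $D_{\underline k}^\kappa(\ci)$ of Section \ref{cidefns-section}, namely the choice of projection $\hdr^1 \twoheadrightarrow \uo$ used after applying $\nabla$ and the Kodaira--Spencer isomorphism, disappears upon evaluation at an ordinary CM point. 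Iterating, one concludes that $D_{\underline k}^\kappa G_{k,\nu,F}$ and the restriction of $D_{\underline k}^\kappa(\ci) G_{k,\nu,F}$ to the ordinary CM locus coincide as sections of $\cE_{\underline k\cdot\kappa} \otimes R$, at least after fixing compatible trivializations of $\uo_A$.

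Next I would compare the post-processing. On the $p$-adic side, $\Theta^\kappa G_{k,\nu,F} = \Psi_{\underline k\cdot\kappa} \circ D_{\underline k}^\kappa G_{k,\nu,F}$ by Theorem \ref{Theta}, so by the definition of $\Psi_\kappa$ recalled in Section \ref{PSI} its value at an ordinary CM point $(A,\iota_A)$ is $\ellcan^{\underline k\cdot\kappa}\bigl(D_{\underline k}^\kappa G_{k,\nu,F}(A,j_{\iota_A})\bigr)$, where $j_{\iota_A}$ is the trivialization of $\uo_A$ induced by the Igusa structure via the canonical isomorphism $\uo_{A/R} \cong A[p^n]^{\et}\otimes R$. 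On the $\ci$ side, the Schur projection $\pi^\infty_{\kappa\cdot\underline k}$ followed by $\ellcan^{\underline k\cdot\kappa}$ extracts exactly the highest weight coefficient of $D_{\underline k}^\kappa(\ci)G_{k,\nu,F}(A,j_\infty)$, where $j_\infty$ is the complex-analytic trivialization determined by the complex uniformization of $A$. Both operations are the same linear functional on $\cE_{\underline k\cdot\kappa}$; the only choice is the trivialization of $\uo_A$. Then applying the restriction $\res$ commutes with both procedures, since restriction is compatible with the canonical lift of $\phi$ to Igusa towers (Section \ref{epsilon}) and with the Hodge decomposition on the smaller Shimura variety.

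Finally, the period factor arises precisely from comparing $j_{\iota_A}$ with $j_\infty$: the former comes from the étale $p$-adic Tate module of $A$ and the latter from the singular homology $H_1(A^{\an},\bZ)$, and the ratio is (a power of) the CM period $\Omega_p / \Omega_\infty$ associated with the embedding $\iota_\infty$ and the CM type. For weight $\underline k\cdot\kappa$, this period enters as the appropriate monomial in $\Omega_p/\Omega_\infty$ indexed by $\kappa$ and $k$, exactly as in \cite[Ch.~5]{kaCM}. Thus, for a fixed pair $(A,\omega)$ of an ordinary CM abelian variety with differential, the two values differ by this period, proving the proposition.

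The main obstacle, and the place where care is needed, is verifying that the identification of the two decompositions is compatible with the iterated construction (i.e.~that after applying $\nabla$ once and projecting, the resulting section of $\uo \otimes \uo^2$ again lies in the locus where the two decompositions agree, so that the next iteration can be compared). This is handled by Katz's observation that the unit root submodule is horizontal for $\nabla$ (Proposition \ref{horizontal}\eqref{item1}) together with the analogous horizontality of $\overline{\uo(\ci)}$, both of which imply that repeated application of the two operators differs only in a single projection per step -- and that projection agrees at ordinary CM points.
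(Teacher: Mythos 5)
Your proposal is correct and follows essentially the same route as the paper, whose proof simply defers to \cite[Section 5]{kaCM} and the analogous arguments in \cite{apptoSHL, apptoSHLvv}: the coincidence of the unit root and Hodge splittings at ordinary CM points, the horizontality needed to iterate, and the period arising from comparing the \'etale and complex trivializations of $\uo_A$ are exactly the content of those references. You have merely written out the details the paper leaves implicit.
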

Thus, as a consequence of Theorem \ref{fam-thm}, we can $p$-adically interpolate the values of $\ellcan^{\underline{k}\cdot\kappa}\circ \pi^\infty_{\kappa\cdot\underline{k}}\circ D_{\underline{k}}^{\kappa}\left(\ci\right) \res_{\underline{k}} G_{k, \nu, F}$ (modulo periods) at ordinary CM points as $\kappa$ varies $p$-adically.
\begin{proof}
The proof is similar to \cite[Section 5]{kaCM}, \cite[Section 3.0.1]{apptoSHL}, and \cite[Section 5.1.1]{apptoSHLvv}.
\end{proof}

Let $\chi=\prod_w\chi_w$ be a Hecke character of type $A_0$.  We obtain a $p$-adically continuous character $\tilde{\chi}$ on $X_p$, where $X_p$ denotes the projective limit of the ray class groups of $\cmfield$ of conductor $p^r$, as follows.    Let $\tilde\chi_\infty: \left(\cmfield\otimes\Z_p\right)^\times\rightarrow \overline\IQ_p^\times$ be the $p$-adically continuous character such that
\begin{align*}
\tilde\chi_\infty (a) = \imath_p\circ \chi_\infty(a)
\end{align*}
for all $a\in \cmfield$.  So the restriction of $\tilde\chi_\infty$ to $\left(\cO_\cmfield\otimes\Z_p\right)^\times$ is a $p$-adic character.  We define a $p$-adic character $\tilde{\chi}$ on $X_p$
by $\tilde\chi\left(\left(a_w\right)\right) = \tilde\chi_\infty\left(\left(a_w\right)_{w\divides p}\right)\prod_{w\ndivides \infty}\chi_w(a_w)$.

For each character $\zeta$ on the torus $T$ and for each type $A_0$ Hecke character $\chi = \chi_u|\cdot|^{-k/2}$ with $\chi_u$ unitary, we define $F_{\chi_u, \zeta}(x, y)$ on $\left(\OK\otimes\ZZ_p\right)^\times\times \GL_n\left(\Oreal\otimes\ZZ_p\right)$ by $F_{\chi_u, \zeta}(x, y):=\chi_u(x)\phi_{\zeta}(\mathbf{N}_{K/E}(x){ }^ty)$ and extend by $0$ to a function on $\left(\OK\otimes\ZZ_p\right)\times M_{n\times n}\left(\Oreal\otimes\ZZ_p\right)$, with $\phi_\zeta$ defined as in Equation \eqref{phikap-rmk}.

We now construct a certain $p$-adic measure.

\begin{thm}\label{measurethm}
There is a measure $\Eisab$ (dependent on the signature of the group $G'$) on $X_p\times T\left(\ZZ_p\right)$ that takes values in the space of $p$-adic modular forms on $G'$ and that satisfies 
\begin{align}
\int_{X_p\times T\left(\ZZ_p\right)}\tilde{\chi}\psi \kappa\Eisab = \res \Theta^\kappa G_{k, \nu, F_{\chi_u, \psi}} \label{equ-diffopaction}
\end{align}
for each finite order character $\psi$ and $H$-symmetric weight $\kappa$ on the torus $T$ and for each type $A_0$ Hecke character $\chi = \chi_u|\cdot|^{-k/2}$ of infinity type $\prod_{\sigma\in\Sigma}\sigma^{-k}\left(\frac{\bar{\sigma}}{\sigma}\right)^{\nu(\sigma)}$ with $\chi_u$ unitary.

In particular, for $\kappa$ pure $H'$-symmetric, we have \[
\int_{X_p\times T\left(\ZZ_p\right)}\tilde{\chi}\psi \kappa\Eisab =G_{k, \nu, F_{\chi_u, \psi},\kappa}.
\]

 \end{thm}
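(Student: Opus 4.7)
The plan is to construct $\Eisab$ via the classical abstract Kummer congruence mechanism: it suffices to define the integrals of $\Eisab$ against a family of characters whose $\bZ_p$-span is dense in the space of continuous functions on (the appropriate subspace of) $X_p\times T(\bZ_p)$, and to verify that every $p$-adic congruence among those characters lifts to the corresponding congruence among the prescribed integrals in the $p$-adically complete module $V'^{N'}$ of $p$-adic automorphic forms on $G'$. Thus I would declare
\[
\int_{X_p\times T(\bZ_p)} \tilde\chi\,\psi\,\kappa \; d\Eisab \;:=\; \res\,\Theta^\kappa\, G_{k,\nu,F_{\chi_u,\psi}}
\]
on the characters appearing in \eqref{equ-diffopaction}, extend $\bZ_p$-linearly, and show that this extension is bounded in $V'^{N'}$.

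To verify the Kummer congruence I would first apply the $p$-adic Serre--Tate expansion principle (Theorem \ref{t-exp}) to reduce an equality modulo $p^{N}$ in $V'^{N'}$ to an equality of $t$-expansions at a single ordinary CM point $x\in\Igusa '(\Witt)$. Next, using Theorem \ref{thm2} and the comparison of $q$-expansions with $t$-expansions at ordinary CM cusps (as in Proposition \ref{ordCM-prop}), I would expand the $t$-expansion of $G_{k,\nu,F_{\chi_u,\psi}}$ explicitly: its coefficients are finite sums of terms of the shape $\chi_u(a)\,\phi_\psi(\alpha')$ times standard factors independent of $\chi_u$, $\psi$ and $\kappa$. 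The operator $\Theta^\kappa$ acts on $t$-expansions by multiplying the coefficient indexed by $\alpha'$ by the polynomial $\phi_\kappa(\alpha')$ (Definition \ref{phikap-action} and Corollary \ref{cordef}), and the restriction $\res$ is induced by the inclusion of local rings $\id\otimes\epsilon^\vee$ (Remark \ref{proj}), which commutes with the $t$-expansion description.

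The crucial structural identity, visible from the product formula \eqref{phikap-rmk}, is the \emph{multiplicativity} $\phi_{\psi\kappa}(\alpha')=\phi_\psi(\alpha')\,\phi_\kappa(\alpha')$ in the character variable. Consequently each $t$-expansion coefficient of $\res\,\Theta^\kappa\, G_{k,\nu,F_{\chi_u,\psi}}$ factors as $\chi_u(a)\,\phi_{\psi\kappa}(\alpha')$ times a quantity independent of the triple $(\chi,\psi,\kappa)$. Since $\phi_\zeta$ depends continuously on the character $\zeta$ (the remark following Definition \ref{cordef1}), and since $\chi_u(a)\,\phi_{\psi\kappa}(\alpha')$ is, up to explicit combinatorial factors, the value of $\tilde\chi\,\psi\,\kappa$ on the torus element assembled from $a$ together with the minors of $\alpha'$, any uniform congruence $\sum_i c_i\,\tilde\chi_i\,\psi_i\,\kappa_i\equiv 0 \pmod{p^N}$ on $X_p\times T(\bZ_p)$ forces the corresponding $\bZ_p$-combination of $t$-expansion coefficients to vanish modulo $p^N$, which is what is needed; the $\kappa$-variation is simultaneously consistent with Theorem \ref{congruence} and Theorem \ref{fam-thm}.

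The main obstacle is making the identification of $t$-expansion coefficients of $\res\,\Theta^\kappa\, G_{k,\nu,F_{\chi_u,\psi}}$ with the product $\chi_u(a)\,\phi_{\psi\kappa}(\alpha')\cdot(\text{period})$ fully precise: this demands combining the explicit $q$-expansion of $G_{k,\nu,F}$ at a cusp of the Shimura variety attached to $GU(n,n)$ (Theorem \ref{thm2}), the $t$-expansion formalism at an ordinary CM point of $\cS'^{\rm ord}$ (Section \ref{ST-coord}), and the pullback/restriction description of Section \ref{pullbacks-section}, all in the general-signature setting in which $q$-expansions on the target are unavailable. Once this matching is carried out in the spirit of the proof of Proposition \ref{ordCM-prop}, both the existence of $\Eisab$ and the interpolation formula \eqref{equ-diffopaction} fall out of the abstract Kummer-congruence formalism described above.
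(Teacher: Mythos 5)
Your overall architecture (abstract Kummer congruences, multiplicativity of the polynomials $\phi_\zeta$, congruences supplied by Theorem \ref{congruence}) matches the paper's, but the key verification step is routed differently, and your route has a genuine gap. The paper verifies the Kummer congruences entirely \emph{upstairs} on the group $GU(n,n)$, where cusps and $q$-expansions exist: it invokes the $p$-adic $q$-expansion principle on all of $V_{\infty,\infty}$ (not just $V^N$), uses the fact that $D_{\underline{k}}^{\kappa}$ followed by projection onto a highest-weight vector multiplies each $q$-expansion coefficient by the explicit polynomial $\phi_\kappa(\alpha)$ of Corollary \ref{cordef} (the $q$-expansion action parallels the $t$-expansion action because both come from a horizontal basis, with $(1+\underline{t})$ replaced by $\underline{q}$), combines this with the explicit coefficients of Theorem \ref{thm2} and Remark \ref{cordef2}, and only then applies $\res$ --- a fixed linear map, which preserves congruences. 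This is precisely the ``lifting argument'' that lets the construction avoid expansions on $G'$ altogether.

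You instead propose to reduce everything, via Theorem \ref{t-exp}, to $t$-expansions at a single ordinary CM point of $\Igusa'$ and to identify the $t$-expansion coefficients of $\res\,\Theta^\kappa G_{k,\nu,F_{\chi_u,\psi}}$ with $\chi_u(a)\,\phi_{\psi\kappa}(\alpha')$ times factors independent of $(\chi,\psi,\kappa)$. That identification is the missing step, and it is not routine: Theorem \ref{thm2} gives the coefficients of a $q$-expansion at a cusp of the $(n,n)$ Shimura variety, while the $t$-expansion lives at an ordinary CM point of a different (arbitrary-signature) Shimura variety, and there is no formula in the paper translating one set of coefficients into the other. Proposition \ref{ordCM-prop}, which you cite as the template, only compares \emph{values} at CM points (zeroth-order information, and only up to a period), not full expansions, so it cannot supply the coefficient-by-coefficient matching your Kummer-congruence argument requires. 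Without that matching, the claim that a congruence among the characters forces a congruence among the $t$-expansion coefficients is unsubstantiated. The fix is exactly the paper's move: carry out the entire congruence check on $q$-expansions of $GU(n,n)$ in $V_{\infty,\infty}$ before restricting, so that the explicit data of Theorem \ref{thm2} is directly usable.
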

 
 Equation \eqref{equ-diffopaction} is analogous to \cite[Equations (5.5.7)]{kaCM}, which concerns the case of an Eisenstein measure for Hilbert modular forms.  Theorem \ref{measurethm} also extends the main results of \cite{apptoSHL, apptoSHLvv} to arbitrary signature.

The idea of the proof is similar to the idea of the construction of analogous Eisenstein measures in \cite{kaCM, apptoSHL}, i.e. it relies on the $p$-adic $q$-expansion principle.
 \begin{proof} 
First, note that the measure $\mu_{G'}$ is uniquely determined by restricting to finite order characters on $X_p\times T\left(\ZZ_p\right)$ (by, for example, \cite[Proposition (4.1.2)]{kaCM}).  Now, Equation \eqref{equ-diffopaction} follows from the $p$-adic $q$-expansion principle (\cite[Corollary 10.4]{hi05}), as follows:  First, note that the $p$-adic $q$-expansion principle holds for all elements in $V_{\infty,\infty}$, not just those in $V^N$.  Now we apply the differential operators from Section \ref{Dwork-sec} to the automorphic form $G_{k, \nu, F_{\chi_u, \psi}}$ on the general unitary group $G$ of signature $(n,n)$.  So the resulting automorphic form takes values in a vector space that is a representation of $H$.  We project the image onto an irreducible representation for $H'$.  So the pullback of this automorphic form to $G'$ is $\res \Theta^\kappa G_{k, \nu, F_{\chi_u, \psi}}$.
Note that the action of differential operators on $q$-expansions is similar to the action on Serre--Tate expansions, with the parameter $\left(1+\underline{t}\right)$ replaced by $\underline{q}$.  (In each case, it depends on the existence of a horizontal basis.  See \cite[Corollary (2.6.25)]{kaCM} for the case of Hilbert modular forms, which is extended to unitary groups of signature $(n,n)$ in \cite{EDiffOps}.)  In particular, we see from Definition \ref{phikap-action} and Corollary \ref{cordef} that applying $D_{\underline{k}}^{\kappa}$ and then projecting the image onto a highest weight vector results in multiplying each $q$-expansion coefficient by the polynomial from Corollary \ref{cordef}.  Equation \eqref{equ-diffopaction} then follows immediately from Remark \ref{cordef2} together with the abstract Kummer congruences (see \cite[Proposition (4.0.6)]{kaCM}), i.e. the observation that for each integer $m$, whenever a linear combination of the values of the product of characters on the left hand side is $0\mod p^m$, then the corresponding linear combination of $q$-expansions of the automorphic forms on the right hand side is also $0\mod p^m$.

The statement for pure  weights $\kappa$ follows from Remark \ref{pure-res}
\end{proof}

\section{Acknowledgements}
We would like to thank Ana Caraiani very much for contributing to initial conversations about topics in this paper.  We would also like to thank the referee for a careful reading and helpful suggestions.   We are grateful to have had the opportunity to meet at Caltech in June 2014 and February 2015.  Most of the discussions that led to this paper took place there.  We are also grateful to have had the opportunity to make progress on this work while at MSRI in August 2014.

\bibliography{WIN3bib}

\newcommand{\etalchar}[1]{$^{#1}$}
\def\Dbar{\leavevmode\lower.6ex\hbox to 0pt{\hskip-.23ex \accent"16\hss}D}
  \def\cfac#1{\ifmmode\setbox7\hbox{$\accent"5E#1$}\else
  \setbox7\hbox{\accent"5E#1}\penalty 10000\relax\fi\raise 1\ht7
  \hbox{\lower1.15ex\hbox to 1\wd7{\hss\accent"13\hss}}\penalty 10000
  \hskip-1\wd7\penalty 10000\box7}
  \def\cftil#1{\ifmmode\setbox7\hbox{$\accent"5E#1$}\else
  \setbox7\hbox{\accent"5E#1}\penalty 10000\relax\fi\raise 1\ht7
  \hbox{\lower1.15ex\hbox to 1\wd7{\hss\accent"7E\hss}}\penalty 10000
  \hskip-1\wd7\penalty 10000\box7}
\providecommand{\bysame}{\leavevmode\hbox to3em{\hrulefill}\thinspace}
\providecommand{\MR}{\relax\ifhmode\unskip\space\fi MR }
\providecommand{\MRhref}[2]{%
  \href{http://www.ams.org/mathscinet-getitem?mr=#1}{#2}
}
\providecommand{\href}[2]{#2}
\begin{thebibliography}{CEF{\etalchar{+}}16}

\bibitem[AHR10]{AHR}
Matthew Ando, Michael Hopkins, and Charles Rezk, \emph{Multiplicative
  orientations of {$KO$}-theory and of the spectrum of topological modular
  forms}, \url{http://www.math.uiuc.edu/~mando/papers/koandtmf.pdf}.

\bibitem[Beh09]{beh}
Mark Behrens, \emph{Eisenstein orientation}, Typed notes available at
  \url{http://www-math.mit.edu/~mbehrens/other/coredump.pdf}.

\bibitem[BH17]{BH}
Ashay~A. Burungale and Haruzo Hida, \emph{{$\mathfrak p$}-rigidity and {I}wasawa
  {$\mu$}-invariants}, Algebra Number Theory \textbf{11} (2017), no.~8,
  1921--1951. \MR{3720935}

\bibitem[Bou14]{bou}
Thanasis Bouganis, \emph{Non-abelian {$p$}-adic {$L$}-functions and
  {E}isenstein series of unitary groups---the {CM} method}, Ann. Inst. Fourier
  (Grenoble) \textbf{64} (2014), no.~2, 793--891. \MR{3330923}

\bibitem[Bro13]{brooks}
Ernest~Hunter Brooks, \emph{Generalized {H}eegner cycles, {S}himura curves, and
  special values of $p$-adic {$L$}-functions}, 2013, Thesis (Ph.D.)--University
  of Michigan.

\bibitem[CEF{\etalchar{+}}16]{CEFMV}
Ana Caraiani, Ellen Eischen, Jessica Fintzen, Elena Mantovan, and Ila Varma,
  \emph{$p$-adic $q$-expansion principles on unitary {S}himura varieties},
  Directions in Number Theory, vol.~3, Springer International, 2016,
  pp.~197--243.

\bibitem[CF90]{CF}
Ching-Li Chai and Gerd Faltings, \emph{Degeneration of abelian varieties},
  Ergebnisse der Mathematik und ihrer Grenzgebiete (3) [Results in Mathematics
  and Related Areas (3)], vol.~22, Springer-Verlag, Berlin, 1990, With an
  appendix by David Mumford. \MR{MR1083353 (92d:14036)}

\bibitem[CS74]{coates-sinnott}
J.~Coates and W.~Sinnott, \emph{On {$p$}-adic {$L$}-functions over real
  quadratic fields}, Invent. Math. \textbf{25} (1974), 253--279. \MR{0354615}

\bibitem[DR80]{DR}
Pierre Deligne and Kenneth~A. Ribet, \emph{Values of abelian {$L$}-functions at
  negative integers over totally real fields}, Invent. Math. \textbf{59}
  (1980), no.~3, 227--286. \MR{579702 (81m:12019)}

\bibitem[dSG16]{DSG}
Ehud de~Shalit and Eyal~Z. Goren, \emph{A theta operator on {P}icard modular
  forms modulo an inert prime}, Res. Math. Sci. \textbf{3} (2016), Paper No.
  28, 65. \MR{3543240}

\bibitem[EHLS16]{EHLS}
Ellen Eischen, Michael Harris, Jian-Shu Li, and Christopher~M. Skinner,
  \emph{$p$-adic ${L}$-functions for unitary groups}, 135 pages. Submitted.
  \url{http://arxiv.org/pdf/1602.01776.pdf}.

\bibitem[Eis09]{E09}
Ellen~E. Eischen, \emph{p-adic differential operators on automorphic forms and
  applications}, ProQuest LLC, Ann Arbor, MI, 2009, Thesis (Ph.D.)--University
  of Michigan. \MR{2713895}

\bibitem[Eis12]{EDiffOps}
\bysame, \emph{{$p$}-adic differential operators on automorphic forms on
  unitary groups}, Ann. Inst. Fourier (Grenoble) \textbf{62} (2012), no.~1,
  177--243. \MR{2986270}

\bibitem[Eis14]{apptoSHLvv}
Ellen Eischen, \emph{A p-adic {E}isenstein measure for vector-weight
  automorphic forms}, Algebra Number Theory \textbf{8} (2014), no.~10,
  2433--2469. \MR{3298545}

\bibitem[Eis15]{apptoSHL}
Ellen~E. Eischen, \emph{A p-adic {E}isenstein measure for unitary groups}, J.
  Reine Angew. Math. \textbf{699} (2015), 111--142. \MR{3305922}

\bibitem[Eis16]{emeasurenondefinite}
Ellen~Elizabeth Eischen, \emph{Differential operators, pullbacks, and families
  of automorphic forms on unitary groups}, Ann. Math. Qu\'e. \textbf{40}
  (2016), no.~1, 55--82. \MR{3512523}

\bibitem[EW16]{EW}
Ellen Eischen and Xin Wan, \emph{{$p$}-adic {E}isenstein series and
  {$L$}-functions of certain cusp forms on definite unitary groups}, J. Inst.
  Math. Jussieu \textbf{15} (2016), no.~3, 471--510. \MR{3505656}

\bibitem[Far08]{fargues}
Laurent Fargues, \emph{L'isomorphisme entre les tours de {L}ubin-{T}ate et de
  {D}rinfeld et applications cohomologiques}, L'isomorphisme entre les tours de
  {L}ubin-{T}ate et de {D}rinfeld, Progr. Math., vol. 262, Birkh\"auser, Basel,
  2008, pp.~1--325. \MR{2441312}

\bibitem[FH91]{FH}
William Fulton and Joe Harris, \emph{Representation theory. a first course},
  Graduate Texts in Mathematics, vol. 129, Springer-Verlag, New York, 1991,
  Readings in Mathematics. \MR{MR1153249}

\bibitem[Har81]{hasv}
Michael Harris, \emph{Special values of zeta functions attached to {S}iegel
  modular forms}, Ann. Sci. \'Ecole Norm. Sup. (4) \textbf{14} (1981), no.~1,
  77--120. \MR{MR618732 (82m:10046)}

\bibitem[Hid91]{hidaLfcn}
Haruzo Hida, \emph{On {$p$}-adic {$L$}-functions of {${\rm GL}(2)\times {\rm
  GL}(2)$} over totally real fields}, Ann. Inst. Fourier (Grenoble) \textbf{41}
  (1991), no.~2, 311--391. \MR{1137290 (93b:11052)}

\bibitem[Hid04]{hida}
\bysame, \emph{{$p$}-adic automorphic forms on {S}himura varieties}, Springer
  Monographs in Mathematics, Springer-Verlag, New York, 2004. \MR{MR2055355
  (2005e:11054)}

\bibitem[Hid05]{hi05}
\bysame, \emph{{$p$}-adic automorphic forms on reductive groups}, Ast\'erisque
  (2005), no.~298, 147--254, Automorphic forms. I. \MR{MR2141703 (2006e:11060)}

\bibitem[HLS06]{HLS}
Michael Harris, Jian-Shu Li, and Christopher~M. Skinner, \emph{{$p$}-adic
  {$L$}-functions for unitary {S}himura varieties. {I}. {C}onstruction of the
  {E}isenstein measure}, Doc. Math. (2006), no.~Extra Vol., 393--464
  (electronic). \MR{MR2290594 (2008d:11042)}

\bibitem[Hop02]{hopkinsICM}
M.~J. Hopkins, \emph{Algebraic topology and modular forms}, Proceedings of the
  {I}nternational {C}ongress of {M}athematicians, {V}ol. {I} ({B}eijing, 2002)
  (Beijing), Higher Ed. Press, 2002, pp.~291--317. \MR{1989190 (2004g:11032)}

\bibitem[Hsi14]{FUH}
Ming-Lun Hsieh, \emph{Eisenstein congruence on unitary groups and {I}wasawa
  main conjectures for {CM} fields}, J. Amer. Math. Soc. \textbf{27} (2014),
  no.~3, 753--862. \MR{3194494}

\bibitem[Jan03]{Jantzen}
Jens~Carsten Jantzen, \emph{Representations of algebraic groups}, second ed.,
  Mathematical Surveys and Monographs, vol. 107, American Mathematical Society,
  Providence, RI, 2003. \MR{2015057 (2004h:20061)}

\bibitem[Kat70]{katzGM}
Nicholas~M. Katz, \emph{Nilpotent connections and the monodromy theorem:
  {A}pplications of a result of {T}urrittin}, Inst. Hautes \'Etudes Sci. Publ.
  Math. (1970), no.~39, 175--232. \MR{0291177 (45 \#271)}

\bibitem[Kat73]{kad}
Nicholas Katz, \emph{Travaux de {D}work}, S\'eminaire {B}ourbaki, 24\`eme
  ann\'ee (1971/1972), {E}xp. {N}o. 409, Springer, Berlin, 1973, pp.~167--200.
  Lecture Notes in Math., Vol. 317. \MR{MR0498577 (58 \#16672)}

\bibitem[Kat78]{kaCM}
Nicholas~M. Katz, \emph{{$p$}-adic {$L$}-functions for {CM} fields}, Invent.
  Math. \textbf{49} (1978), no.~3, 199--297. \MR{MR513095 (80h:10039)}

\bibitem[Kat81]{KaST}
\bysame, \emph{{S}erre-{T}ate local moduli}, Algebraic Surfaces (Orsay,
  1976--78), Springer, Berlin, 1981, pp.~138--202. Lecture Notes in
  Mathematics, Vol. 868. \MR{MR0638600 (83k:14039b)}

\bibitem[KO68]{KO}
Nicholas~M. Katz and Tadao Oda, \emph{On the differentiation of de {R}ham
  cohomology classes with respect to parameters}, J. Math. Kyoto Univ.
  \textbf{8} (1968), 199--213. \MR{0237510 (38 \#5792)}

\bibitem[Kot92]{kottwitz}
Robert~E. Kottwitz, \emph{Points on some {S}himura varieties over finite
  fields}, J. Amer. Math. Soc. \textbf{5} (1992), no.~2, 373--444.
  \MR{MR1124982 (93a:11053)}

\bibitem[Lan13]{lan}
Kai-Wen Lan, \emph{Arithmetic compactifications of {PEL}-type {S}himura
  varieties}, London Mathematical Society Monographs Series, vol.~36, Princeton
  University Press, Princeton, NJ, 2013. \MR{3186092}

\bibitem[Lan16]{lan5}
\bysame, \emph{Higher {K}oecher's principle}, Math. Res. Lett. \textbf{23}
  (2016), no.~1, 163--199.

\bibitem[Lan18]{lan4}
\bysame, \emph{Compactifications of {PEL}-type {S}himura varieties and {K}uga
  families with ordinary loci}, World Scientific Publishing Co. Pte. Ltd.,
  Hackensack, NJ, 2018. \MR{3729423}

\bibitem[Liu16]{ZL}
Zheng Liu, \emph{$p$-adic ${L}$-functions for ordinary families of symplectic
  groups}, preliminary version.
  \url{http://www.math.ias.edu/~zliu/SLF%20IV.pdf}.

\bibitem[Man58]{manin}
Ju.~I. Manin, \emph{Algebraic curves over fields with differentiation}, Izv.
  Akad. Nauk SSSR. Ser. Mat. \textbf{22} (1958), 737--756. \MR{0103889 (21
  \#2652)}

\bibitem[Pan03]{pa}
A.~A. Panchishkin, \emph{Two variable {$p$}-adic {$L$} functions attached to
  eigenfamilies of positive slope}, Invent. Math. \textbf{154} (2003), no.~3,
  551--615. \MR{MR2018785 (2004k:11065)}

\bibitem[Pan05]{pa-maass}
\bysame, \emph{The {M}aass-{S}himura differential operators and congruences
  between arithmetical {S}iegel modular forms}, Mosc. Math. J. \textbf{5}
  (2005), no.~4, 883--918, 973--974. \MR{2266464}

\bibitem[Ser73]{serre}
Jean-Pierre Serre, \emph{Formes modulaires et fonctions z\^eta {$p$}-adiques},
  Modular functions of one variable, {III} ({P}roc. {I}nternat. {S}ummer
  {S}chool, {U}niv. {A}ntwerp, 1972), Springer, Berlin, 1973, pp.~191--268.
  Lecture Notes in Math., Vol. 350. \MR{MR0404145 (53 \#7949a)}

\bibitem[Shi84]{shclassical}
Goro Shimura, \emph{On differential operators attached to certain
  representations of classical groups}, Invent. Math. \textbf{77} (1984),
  no.~3, 463--488. \MR{759261 (86c:11034)}

\bibitem[Shi97]{sh}
\bysame, \emph{Euler products and {E}isenstein series}, CBMS Regional
  Conference Series in Mathematics, vol.~93, Published for the Conference Board
  of the Mathematical Sciences, Washington, DC, 1997. \MR{MR1450866
  (98h:11057)}

\bibitem[Shi00]{Shimura}
\bysame, \emph{Arithmeticity in the theory of automorphic forms}, Mathematical
  Surveys and Monographs, vol.~82, American Mathematical Society, Providence,
  RI, 2000. \MR{1780262 (2001k:11086)}

\bibitem[SU14]{SkUr}
Christopher Skinner and Eric Urban, \emph{The {I}wasawa main conjectures for
  {$\rm GL_2$}}, Invent. Math. \textbf{195} (2014), no.~1, 1--277. \MR{3148103}

\bibitem[Wan15]{XW}
Xin Wan, \emph{Families of nearly ordinary {E}isenstein series on unitary
  groups}, Algebra Number Theory \textbf{9} (2015), no.~9, 1955--2054, With an
  appendix by Kai-Wen Lan. \MR{3435811}

\end{thebibliography}

\end{document}